\documentclass[10.5pt]{article}
\usepackage{}
\usepackage{amsmath}
\usepackage{amsfonts}
\usepackage{graphicx}
\usepackage{amssymb}
\usepackage{color}

\newtheorem{condition**}{A*}
\newtheorem{condition***}{C*}
\newtheorem{condition*}{C}

\newtheorem{proposition}{Proposition}[section]

\newtheorem{definition}{Definition}[section]
\newtheorem{theorem}{Theorem}[section]
\newtheorem{lemma}{Lemma}[section]
\newtheorem{remark}{Remark}[section]

\def\qed{{\hfill $\Box$ \bigskip}}
\newenvironment{keywords}{{\bf Key words: }}{}

\def\d{\delta}
\def\bea{\begin{equation*}\begin{aligned}}
\def\eae{\end{aligned}\end{equation*}}

\textwidth= 160 mm
\textheight= 240 mm
\oddsidemargin=2 mm
\topskip 0.3cm
\topmargin=-0.5in

\begin{document}

\title{A unified approach to  mean-field team: homogeneity, heterogeneity and quasi-exchangeability}

\author{Xinwei Feng$\;^{a}$, Ying Hu$\;^{b}$, Jianhui Huang$\;^{c}$\bigskip\\{\small ~$^{a}$Zhongtai Securities Institute for Financial Studies, Shandong University, Jinan, Shandong
250100, China}\\{\small ~$^{b}$Univ Rennes, CNRS, IRMAR-UMR 6625, F-35000 Rennes, France}\\{\small $^{c}$Department of Applied Mathematics, The Hong Kong Polytechnic University, Hong Kong, China}}
\renewcommand{\thefootnote}{\fnsymbol{footnote}}

\renewcommand{\thefootnote}{\arabic{footnote}}

\maketitle

\begin{abstract}This paper aims to systematically solve stochastic team optimization of large-scale system, in a rather general framework. Concretely, the underlying large-scale system involves considerable weakly-coupled cooperative agents for which the individual admissible controls: (\textbf{i}) enter the diffusion terms, (\textbf{ii}) are constrained in some closed-convex subsets, and (\textbf{iii}) subject to a general \emph{partial decentralized information} structure. A more important but serious feature: (\textbf{iv}) all agents are heterogenous with \emph{continuum} instead \emph{finite} diversity. Combination of (\textbf{i})-(\textbf{iv}) yields a quite general modeling of stochastic team-optimization, but on the other hand, also fails current existing techniques of team analysis. In particular, classical team consistency with continuum heterogeneity collapses because of (\textbf{i}). As the resolution, a novel \emph{unified approach} is proposed under which the intractable \emph{continuum} \emph{heterogeneity} can be converted to a more tractable \emph{homogeneity}. As a trade-off, the underlying randomness is augmented, and all agents become (quasi) weakly-exchangeable. Such approach essentially involves a subtle balance between homogeneity v.s. heterogeneity, and left (prior-sampling)- v.s. right (posterior-sampling) information filtration. Subsequently, the consistency condition (CC) system takes a new type of forward-backward stochastic system with \emph{double-projections} (due to (\textbf{ii}), (\textbf{iii})), along with \emph{spatial mean} on continuum heterogenous index (due to (\textbf{iv})). Such system is new in team literature and its well-posedness is also challenging. We address this issue under mild conditions. Related asymptotic optimality is also established.
\end{abstract}

\begin{keywords}  Continuum heterogeneity, Exchangeability, Homogeneity, Input constraints, LQG mean-field game, Partial decentralized information, Weak construction duality.\end{keywords}

\section{Introduction}

The starting point of present work is the well-studied mean-field team (MT). In its standard form, a MT involves a large-scale system with considerable weakly-interactive but \emph{cooperative} agents $\{\mathcal{A}_{i}\}_{i=1}^{N}.$ All agents are endowed with an individual (\emph{principal}) state, cost functional and admissible decision set respectively in the following manner. The individual state dynamics of $\mathcal{A}_{i}$ is formulated by a controlled It\^{o}-type linear stochastic differential equation (LSDE):
\begin{equation}\label{homo-state}\left\{\begin{aligned}
dx_i(t)=&[A(t)x_i(t)+B(t)u_i(t)+F(t)x^{(N)}(t)+f_t]dt+\sigma_t dW_i(t),\\
x_i(0)=&\xi\in\mathbb R^n,\qquad 1\leq i\leq N,
\end{aligned}\right.\end{equation}where $x^{(N)}:=\frac{1}{N}\sum_{i=1}^Nx_i$ is the weakly-coupled state-average across all agents, $W_i$ is a Brownian Motion (BM) that might be vector-valued (e.g., with a common noise). For each $\mathcal{A}_{i},$ its \emph{principal cost} $\mathcal{J}_{i}$ (while we may call $\{\mathcal{J}_{j}\}_{j \neq i}$ the \emph{marginal costs} for $\mathcal{A}_{i}$) is measured by the following quadratic functional:
 \begin{equation}\label{cost}\begin{aligned}
\mathcal J_i(\mathbf{u}(\cdot))
=\frac{1}{2}\mathbb E\int_0^T\Big[\langle Q(t)(x_i(t)-H(t) x^{(N)}(t)),x_i(t)-H (t) x^{(N)}(t)\rangle+\langle R(t)u_i(t),u_i(t)\rangle\Big]dt,
\end{aligned}\end{equation}with admissible team strategy $\mathbf{u}(\cdot)=(u_1^\top(\cdot),\cdots,u_N^\top(\cdot))^\top.$ Note individual admissible $u_i(\cdot) \in \mathcal{U}^{d,f}_{i,op}= L^{2}_{\mathbb{F}^i}(0,T;\mathbb R^m)$ with filtration $\mathbb{F}^i$ defined later, representing the decentralized open-loop information of $\mathcal A_i$.

A subtle point here is the distinction between centralized ($\mathcal U_i^{c,f}$), and decentralized ($\mathcal U_{i,op}^{d,f}$, $\mathcal U_{i,cl}^{d,f}$) but of full information. This makes team-optimization differing from classical \emph{vector}-optimization/control; superscripts ``\emph{cl}", ``\emph{ol}" denote the closed-loop and open-loop; ``\emph{f}" the full-information. We will address this point more detailed in Section \ref{formulation}. Hereafter, we may exchange the usage of $\mathbf u=(u_1,\cdots,u_N)\in\mathbb R^{m\times N}$, $\mathbf{u}=(u_1^\top,\cdots,u_N^\top)^\top\in\mathbb R^{mN}$ and $\mathbf u=(u_i,u_{-i})\in\mathbb R^{m\times N}$ with $u_{-i}=(u_1,\cdots,u_{i-1},u_{i+1},u_N)\in\mathbb R^{m\times(N-1)}$ by noting all of them represent team profile among all agents, but only differ in formations. For simplicity, we focus on \emph{Lagrange problem} only, and no essential difficulty to \emph{Bolza problem} extension.

By mean-field ``team", we refer all weakly-coupled agents $\{\mathcal{A}_{i}\}_{i=1}^{N}$ are cooperative aiming to optimize the following social (or, team) cost functional (the related optimal functional is called \emph{social optima}):\begin{equation*}
\mathcal J_{soc}^{(N)}(\mathbf{u}(\cdot))=\sum_{i=1}^N\mathcal J_i(\mathbf{u}(\cdot)).
\end{equation*}
Because of the cooperation nature, the analysis of MT should proceed very differently from that of mean-field game (e.g., \cite{BP,BSYS2016,Cardaliaguet,CD2013,CS2015,LL2007}), especially in its analysis ingredients on variational decomposition and \emph{person-by-person optimality} principle. For non-cooperate $N$-player game with interaction of mean field type, the objective of the players is to seek the Nash equilibria. Please refer \cite{CF,CDLL,Fisher,Lacker,NST} for the limit relation between mean-field games (MFG) and non-cooperate $N$-player games. The interested readers may refer e.g., \cite{HCM2012,NCMH,SLM}, for detailed analysis comparison between MFG and MT, and \cite{QHX,WZZ} for some recent MT study from various perspectives with different modeling variants. In particular, see \cite{HWY2019} for social optima in mean field control problems with volatility uncertainty; see \cite{Huang2010} for linear-quadratic-Gaussian (LQG) mean-field social optimization with a major player; and \cite{WZ2017} for social optima in LQG models with Markov jump parameters.

Our work distinguishes itself from all above MT literature by the following fairly (even not the most) general formulation, in LQG context. Unlike \eqref{homo-state}, the individual dynamic of agent $\mathcal{A}_{i}$ now takes:
 \begin{equation}\left\{\label{state equation}\begin{aligned}
dx_i(t)=&[A_{\Theta_i}(t)x_i(t)+B(t)u_i(t)+F(t)x^{(N)}(t)]dt\\
&+[C(t)x_i(t)+D_{\Theta_i}(t)u_i(t)+\widetilde  F(t)x^{(N)}(t)]dW_i(t),\\
x_i(0)=&\xi\in\mathbb R^n,\qquad 1\leq i\leq N,
\end{aligned}\right.\end{equation}
and the admissible strategy set for $\mathcal A_i$ is now assumed to be
\begin{equation}\label{admissible control}
\mathcal U_i^{d,p}=
\{u_i(\cdot)|u_i(\cdot)\in L^2_{\mathbb G^i}(0,T;\Gamma)
\}
\end{equation}
 where $\mathbb G^i \subseteq\mathbb F^i$ or $\mathbb G^i \subseteq \mathbb H^i$
  is a sub-filtration representing the partial information; $\Gamma \subset \mathbb R^m$ is a nonempty closed convex set representing the input constraint.

There are four main modeling features in formulation \eqref{state equation}, \eqref{admissible control}:

(\textbf{i})\emph{Weakly-coupled controlled-diffusion}. It is remarkable that in \eqref{homo-state}, when $D \neq 0$ so control process enters diffusion terms of It\^{o}-type LSDE (driven by $W_i(\cdot)$), and when $\widetilde  F \neq 0$ so all individual states are weakly-coupled in diffusion terms also. In this case, we may call (\ref{state equation}) to be \emph{diffusion-controlled and weakly-coupled}. This differs from \cite{HCM2012} in modeling that is only \emph{drift-controlled and weakly-coupled}. Such modeling difference also brings considerable analysis distinctions, for example, on the relevant study of  Hamiltonian systems, as well as consistency condition (CC) (see more comparison details in Section \ref{auxiliary problem} and Section \ref{decentralized strategy}). Without loss of generality, no forcing terms such as $f, \sigma$ involve in \eqref{state equation}.

\textbf{(ii)} {\emph{Random diversity}}. Recall that \eqref{homo-state} is \emph{homogenous} since all agents are endowed with identical parameters thus they become symmetric. Subsequently, the (decentralized) optimal strategy and states, still denoted as $\{u_i\}_{i=1}^{N}$ and $\{x_i\}_{i=1}^{N}$, should turn to be exchangeable. By contrast, in \eqref{state equation}, a random index $\Theta_i $ is introduced in parameter $A, D$  (also possible to be equipped on other parameters including the cost) to model the diversity across underlying large-scale system. All agents thereby become heterogenous. Although heterogenous large-scale system is already well addressed in such as \cite{HHN2018,Huang2010}, we point out in these works, the heterogenous index is technically treated as some realization after random sampling, along with necessary \emph{ordinal arrangements} within each sub-classes. Thus, essentially the index therein is some deterministic realization. This differs substantially from our random index treatment here along with related analysis, to be highlighted later. In addition, our index $\Theta_i$ can assume a \emph{continuum} support that distinguishes from most heterogenous literature with only finite/discrete support (see., e.g., \cite{HHN2018,Huang2010}). Moreover, although continuum heterogeneity is also discussed in e.g., \cite{NH}, but analysis therein heavily relies upon the LQ structure with full input and resultant explicit representation. Such analysis collapses in current formulation \eqref{state equation}, due to the intrinsic \emph{diffusion-controlled weakly-coupled} feature introduced before, and an input constraint feature to be introduced below.

(\textbf{iii}) {\emph{Input constraint}}. Note that a convex-closed set $\Gamma$ is introduced in \eqref{admissible control} denoting some point-wise constraint in control input. Recall that such pointwise input constraint is well documented in e.g., \cite{CZ2006,ET2015,HZ2005,LZL2002}. A typical example is $\Gamma=\mathbb{R}^{+}$ representing the positive control, or \emph{no-shorting} constraint in portfolio selection (\cite{LZL2002}). Other examples may include subspace (\cite{ET2015}) or a general convex cone (\cite{HZ2005}). We remark that point-wise input constraint is also studied in large-scale/large-population context such as \cite{HHL2017} but in competitive \emph{mean-field-game} setup, which differs from our cooperative mean-field team here.

(\textbf{iv}) {\emph{Partial information}}. Last but not least, the admissible control set is confined on a partial information set $L^{2}_{\mathbb{G}^i}(0,T;\Gamma)$. LQG control with partial information is also well documented (e.g., \cite{WWX2015}). Also, partial information for large population system is also addressed recently (see \cite{BLM,CK2017,FC2020,HWW2016} for partial information/observation mean-field game). However, to our best knowledge, it is the first time to address partial information in \emph{mean-field team} context. Notice that the partial information setting differs from that of partial observation (\cite{Bensoussan}) for which some filtering method with innovation process should be invoked. We defer more detailed information structure in Section \ref{formulation} after more rigorous formulation.

To certain content, our aim in current work is to solve LQG MT problem in a rather general setup, by combining aforementioned features (i)-(iv) together. Although we admit various effective techniques have been already proposed to tackle these features \emph{individually}, however their \emph{combination} brings much more technical hurdles, and makes the associated analysis rather challenging. For example, the continuum heterogenous large-scale system is well studied by \cite{NH} in mean-field game setup. Nevertheless, its parallel analysis variant to MT fails to work in current formulation because of the following reasoning. Due to controlled-diffusion feature (i), the related CC does not admit direct characterization because the adjoint process of some backward SDE should enter CC dynamics. Therefore, the direct augmented method in \cite{WZZ} fails to work here. Instead, some indirect embedding method \cite{HHN2018,QHX} becomes necessary in the presence of (i). Nevertheless, due to continuum heterogenous feature (ii), the classical embedding CC in \cite{HHN2018,QHX} no longer works since we have to construct an infinite-dimensional Brownian motion-driven system (on continuum-valued space) to replicate the empirical distribution generated by controlled large-scale system. Meanwhile, the method in \cite{QHX} is also not infeasible since it mainly rely on some close-form representation of optimal state/cost. This becomes unavailable because of the input constraint (iii) imposed above. In nutshell, in case (i) or (iii) not combined togeher, we may still handle continuum heterogenous MT with (ii) by modifying existing methods in e.g., \cite{QHX}. However, combination of (i), (ii), (iii) together make all such existing methods no longer workable.

Other examples include the person-by-person procedure due to continuum heterogeneous (ii), and tailor-made decentralized strategy in presence of both point-wise constraint (iii) and partial information constraint (iv). To circumvent these difficulties, we propose some novel analysis techniques such as weak construction duality and modified embedding representation, etc. More analysis details are illuminated in Section \ref{auxiliary problem} and Section \ref{well-posedness of CC}.

Our main contributions can be sketched as follows: (1) First, we devise a new framework to unify homogenous and heterogenous (discrete or continuum) setups in large-scale system. In particular, it is enabled to transform heterogenous setup into a homogenous one, with the tradeoff of an augmented randomness. (2) Second, under such new framework, we derive a modified embedding representation of CC system (a crux in MT analysis) to accommodate the continuum diversities. (3) Third, the input constraint and partial information constraint are tackled both, and a CC system with \emph{double projection} operator is derived. Specifically, the CC system takes a coupled mean-field type forward backward stochastic differential equations (FBSDEs) involving both projection mapping and conditional expectation. This seems quite novel in large-scale literature. (4) Last, the well-posedness of CC system and asymptotic team optimality are established under mild conditions.

We would like to conclude above discussion by highlighting a literature comparison. Seemingly, the current work seems closely related to previous work \cite{HHN2018}. However, the formulation of \cite{HHN2018} is non-cooperative mean-field game with finite heterogenous diversity. By contrast, the current work focuses on a cooperative mean-field team with continuum random diversity index. In addition, current formulation includes partial information also, thus the CC condition here involves a double projection whereas \cite{HHL2017,HHN2018} only involves one single projection. Last but not least, other MT analysis ingredients also differ essentially from those in MG setup such as \cite{HHN2018}, owning to the intrinsic distinction between game and team.

The remaining of this paper is organized as follows. In Section 2, we give the formulation of LQG heterogeneous agents problem with input constraints and partial information pattern. In Section 3, we apply person-by-person optimality and weak construction duality to find the auxiliary control problem of the individual agent. The decentralized strategy and consistency condition is established in Section 4. Moreover, we also compare our framework with that in the current literature. Section 5 studies the well-posedness of CC system, asymptotic optimality of decentralized strategy is given in Section 6.

\section{Problem formulation}\label{formulation}

We first introduce some standard notations used throughout this paper. Let $\mathbb R^n$ be the $n$-dimensional Euclidean space with the inner product denoted by $\langle\cdot,\cdot,\rangle$. $\mathbb R^{n\times m}$ is the space of all $(n\times m)$ matrices, endowed with the inner product $\langle M_1,M_2\rangle=tr[M_1^\top M_2]$, where $x^\top$ denotes the transpose of a matrix (or vector) $x$ and $tr$ is the trace of a matrix.
$M\in \mathbb{S}^n$ denotes the set of symmetric $n\times n$ matrices with real elements. $M> (\geq) 0$ denotes that $M\in \mathbb{S}^n$ which is positive (semi)definite, while $M\gg 0$ denotes that, $M-\varepsilon I \geq 0$ for some $\varepsilon>0.$

Assume that $(\Omega,\mathcal F,\mathbb P)$ is a complete probability space on which $\{W_i(t),0\leq t\leq T\}_{i=1}^N$ is a $N$-fold Brownian motion (note here $W_i$ might be vector-valued, say, including a common noise component $W_0$) and $\{\Theta_i\}_{i=1}^N$ is a sequence of independent random variables
to represent diversity. In some sense, we may interpret $\{\Theta_i\}$ as some
endogenous randomness, while $\{W_i\}$ some exogenous randomness for generic agent $\mathcal{A}_{i}.$ Moreover, we assume $\{\Theta_i\}_{i=1}^N$ are also independent of $\{W_i(s),s\geq0\}_{i=1}^N$. Let $\{\mathcal F_t^W\}_{0 \leq t \leq T}$ be the filtration generated by $\{W_i(s),0\leq s\leq t\}_{i=1}^N$ and define $\mathcal F^{W,\Theta}_t=\sigma(\Theta_i,1\leq i\leq N)\vee\mathcal F^W_t$. The set of null sets on $\Omega$ is defined by $\mathcal N_{\mathbb P}=\{M\in\Omega|\exists G\in\mathcal F^{W,\Theta}_\infty\text{ with }M\subset G\text{ and }\mathbb P(G)=0\}$. Consider the augmented filtration $\mathbb F=\{\mathcal F_t\}_{0\leq t\leq T}$ with $\mathcal F_t=\sigma(\mathcal F^{W,\Theta}_t\cup\mathcal N_{\mathbb P})$.
Similarly, define $\mathcal F_t^{W_i},\mathcal F_t^{W_i,\Theta_i},\mathcal F_t^i$. As discussed below, they respectively denote the centralized and decentralized information.

For any Euclidean space $\mathbb V$, $1\leq p<\infty$, and any $T>0$, we introduce some spaces which will be used later:
\begin{itemize}
  \item $L_{\mathcal F_T}^p(\Omega;\mathbb V):=\{\eta:\Omega\rightarrow\mathbb V|\eta\text{ is }\mathcal F_T\text{-measurable such that }\mathbb E|\eta|^p<\infty\}$.
  \item $L^\infty(0,T;\mathbb V):=\{\varphi(\cdot):[0,T]\rightarrow\mathbb V\text{ such that } esssup_{0\leq s\leq T}|\varphi(s)|<\infty\}.$
  \item $L^p(0,T;\mathbb V):=\{\varphi(\cdot):[0,T]\rightarrow\mathbb V\text{ such that } \int_0^T|\varphi(s)|^pds<\infty\}.$
   \item $L^p_{\mathbb F}(0,T;\mathbb V):=\{\varphi(\cdot):\Omega\times[0,T]\rightarrow\mathbb V\text{ is progressively measurable such that}\\ \mathbb E\int_0^T|\varphi(s)|^pds<\infty\}.$
\end{itemize}
We consider a weakly coupled large population system of heterogeneous agents $\{\mathcal A_i:1\leq i\leq N\}$ with the dynamics of the agents given in \eqref{state equation}, and cost functional \eqref{cost}. For sake of presentation, we restate them as follows:
\begin{equation}\label{LQGMT-1}\left\{\begin{aligned}
&\left\{\begin{aligned}dx_i(t)=&[A_{\Theta_i}x_i+Bu_i+Fx^{(N)}]dt+[Cx_i+D_{\Theta_i}u_i+\widetilde  Fx^{(N)}]dW_i,\\
x_i(0)=&\xi\in\mathbb R^n,\qquad 1\leq i\leq N,\\
\end{aligned}\right.\\
&\mathcal J_i(\mathbf{u}(\cdot))
=\frac{1}{2}\mathbb E\int_0^T\Big[\langle Q(x_i-H x^{(N)}),x_i-H  x^{(N)}\rangle+\langle Ru_i,u_i\rangle\Big]dt.
\end{aligned}\right.\end{equation}
As mentioned before, state \eqref{state equation} and functional \eqref{cost} formulate a weakly coupled large-scale system with heterogeneous agents $\{\mathcal A_i:1\leq i\leq N\}.$ The aggregate team functional of $N$ agents is
\begin{equation*}
\mathcal J_{soc}^{(N)}(\mathbf{u}(\cdot))=\sum_{i=1}^N\mathcal J_i(\mathbf{u}(\cdot)).
\end{equation*}
$(A_{\Theta_i}(\cdot),B(\cdot),C(\cdot),D_{\Theta_i}(\cdot),F(\cdot),\widetilde F(\cdot))$ are called the state-coefficient datum, while $(Q(\cdot),\\H(\cdot),R(\cdot))$ the cost weight datum.
We explain more details for above datum. $F, \widetilde{F}$ are \emph{weakly-coupling coefficients} on state-drift and state-diffusion respectively; $H$ is weakly-coupling coefficient on functional; $C, D_{\Theta_i}$ are diffusion state-dependence and  diffusion control-dependence coefficients respectively. Note that $D_{\Theta_i} \neq 0$ represents the case when control enters diffusion terms alike the risky portfolio selection (e.g., \cite{HZ2005,LZL2002,zl2000}); $F, \widetilde{F} \neq 0$ denotes the agents are coupled in their dynamics such as the price formation problem (e.g., \cite{GD,LLLL}); $H \neq 0$ denotes the \emph{relative performance} formulation (e.g., \cite{ET2015}).

Unlike state \eqref{homo-state}, we introduce $\{\Theta_i\}_{i=1}^{N}$ in \eqref{state equation} as some diversity index to characterize the possible heterogenous features among all agents in underlying large-scale system. We point out that $\Theta_i$ maybe vector-valued on a Cartesian \emph{grid} space, say $[a_1, b_1] \times [a_2, b_2]$ or $[a_1, b_1] \times \{1, \cdots, K\}$, to represent various feature dimensions, either in continuum space or discrete space, or in a hybrid manner.

\begin{remark}We remark that discrete- or finite-valued $\Theta_i$ might be transformed into continuum one by assigning uniform distribution on compact interval along with given partitions. Indeed, this is equivalent to simulate a given discrete random variable using quantile method by uniform distribution. Thus, hereafter we focus on vector-valued index $\Theta_i$ on Cartesian space $\mathbb{R}^{k}$.\end{remark}For simplicity, we only assume that the coefficients $A$ and $D$ to be dependent on $\Theta_i$. Similar analysis can be generalized to the case when all other coefficients are also $\Theta_i$-dependent. Besides, all datum may depend on time variable $t$, in what follows the variable $t$ will usually be suppressed if no confusion occurs.  We now introduce the following assumption on distribution and coefficient datum set:
\begin{description}
  \item[(A1)] For $i=1,\cdots,N$, $\Theta_i:\Omega\rightarrow \mathcal{S}$ are independently identically distributed (i.i.d) with the distribution function $\Phi(\theta)$, i.e., $\int_{\mathcal{S}} d\Phi(\theta)=1$, where $\mathcal{S}$ is a continuum subset in Cartesian space $\mathbb{R}^{k} $.

  \item[(A2)] For any $\theta\in\mathcal S$, $A_{\theta}(\cdot),F(\cdot),C(\cdot),\widetilde F\in L^\infty(0,T;\mathbb R^{n\times n}),B(\cdot),D_{\theta}(\cdot)\in L^\infty(0,T;\mathbb R^{n\times m}),\\Q(\cdot)\in L^\infty(0,T;\mathbb S^n)$, $H(\cdot)\in L^\infty(0,T;\mathbb S^n)$, $R(\cdot)\in L^\infty(0,T;\mathbb S^m)$.
\item[(A3)]$Q(\cdot)\geq0$, $R(\cdot)\gg0$.
\end{description}
Under assumptions (A1)-(A2), the state \eqref{state equation} admits a unique strong solution $$x(\cdot)=(x_1(\cdot),\cdots,x_N(\cdot)) \in L^2_{\mathbb F}(0,T;\mathbb R^{N \times n}) ,$$ and the cost functional is well defined for each admissible control strategy $\mathbf{u} (\cdot)$ on appropriate admissible space, to be detailed soon. Moreover, under assumption (A3), the cost functional is uniform convex, that is, there exists some $\delta>0$ such that  $\mathcal J_{soc}^{(N)}(\mathbf{u}) \geq \delta \mathbb E\int_0^T|\mathbf{u}(s)|^{2}ds$.

Given state \eqref{state equation} and functional \eqref{cost}, we can specify the associated information structures.
Recall that in LQG MT, $\{x_{i}\}_{i=1}^{N}$ and $\{u_{i}\}_{i=1}^{N}$ denote states and controls of $\{\mathcal{A}_i\}_{i=1}^{N}$ respectively. Because of interactive coupling by state-average $x^{(N)}:=\frac{1}{N}\sum_{i=1}^{N}x_{i}$, $\mathcal{J}_{i}(u_i, u_{-i})$ depends on total team-decision $\mathbf{u}=(u_i, u_{-i})$. In this sense, \eqref{state equation} exhibits the so-called \emph{weakly interactive coupling} in decision when $N\rightarrow +\infty.$ Again, by such interactive coupling, information structure of \eqref{state equation} becomes more involved:
\begin{itemize}

\item \emph{Centralized information}: consider the filtration $\mathcal F_t^W=\sigma(W_i(s),0\leq s\leq t,i=1,\cdots,N)$, $\mathcal F^{W,\Theta}_t=\sigma(\Theta_i,1\leq i\leq N)\bigvee\mathcal F^W_t$, $0\leq t<\infty$, as well as the set of null sets $\mathcal N_{\mathbb P}=\{M\in\Omega|\exists G\in\mathcal F^{W,\Theta}_\infty\text{ with }M\subset G\text{ and }\mathbb P(G)=0\}$, and create the augmented  filtration $\mathbb F=\{\mathcal F_t\}_{0\leq t\leq T}$ with $\mathcal F_t=\sigma(\mathcal F^{W,\Theta}_t\cup\mathcal N_{\mathbb P})$.
Then $\mathbb{F}=\{\mathcal F_t \}_{0 \leq t \leq T} $ represents the centralized information including all Brownian motions (BMs) and diversity index components across all agents (principal and marginals).

\item \emph{Decentralized, open-loop information}: consider the filtration $\mathcal F_t^{W_i}=\sigma(W_i(s),0\leq s\leq t)$, $\mathcal F^{W_i,\Theta_i}_t=\sigma(\Theta_i)\bigvee\mathcal F^{W_i}_t$, $0\leq t<\infty$, as well as the set of null sets $\mathcal N_{\mathbb P}^i=\{M\in\Omega|\exists G\in\mathcal F^{W_i,\Theta_i}_\infty\text{ with }M\subset G\text{ and }\mathbb P(G)=0\}$, and create the augmented  filtration $\mathbb F^i=\{\mathcal F_t^i\}_{0\leq t\leq T}$ with $\mathcal F_t^i=\sigma(\mathcal F^{W_i,\Theta_i}_t\cup\mathcal N_{\mathbb P}^i)$. Then $\mathbb{F}^i$ represents the decentralized open-loop information that only includes the \emph{principal} components for $\mathcal A_i$. Note that $\{\mathcal{F}^{i}_t\}$ only depends on underlying $W^{i}$ and $\Theta_{i}$ instead of state $x_i$ itself, thus we call it open-loop (although it also differs from classical open-loop due to mean-field nature) information since it depends directly on underlying randomness.

    \item \emph{Decentralized, closed-loop information}: denote by $\{\mathcal{H}_t^{i}\}_{0 \leq t \leq T}$ the filtration by individual state $x_{i}$ augmented by $\mathcal N_{\mathbb P}^i$, i.e., $\mathcal{H}_t^{i}=\sigma\{x_i(s), 0 \leq s \leq t\}\bigvee\mathcal N_{\mathbb P}^i$, then $\mathbb{H}^{i}:=\{\mathcal{H}_t^{i}\}_{0 \leq t \leq T}$ represents decentralized closed-loop information; Note that $\{\mathcal{H}^{i}_t\}$ only depends on underlying principal state $x_i$ itself, thus we call it closed-loop (although it also differs from classical closed-loop due to mean-field nature). We remark that $x_{i}$ is not adapted to $W^{i}$ and $\Theta_{i}$ due to weakly coupling.

     \item  \emph{Decentralized, partial information}: Let $\mathcal G^i_t \subseteq \mathcal F^i_t$  be a sub-$\sigma$-field of $\mathcal F^i_t$ (or, $\mathcal G^{i}_t \subseteq \mathcal H^{i}_t $ be a sub-$\sigma$-field of $\mathcal H^i_t$), then $\mathbb{G}^i=\{\mathcal G^i_t\} _{0 \leq t \leq T} $ represents the decentralized partial information (open-loop or closed-loop) available to $\mathcal A_i$.\end{itemize}
\begin{remark}\label{re2.2}
For decentralized, partial information pattern, $\mathcal G^i_t$ is a given filtration representing
the information available to $\mathcal A_i$ at time t. For example, $\mathcal G^i_t=\mathcal F^i_{(t-\delta)+}$, or $\mathcal G^i_t=\mathcal H^i_{(t-\delta)+}$, $t\in [0, T] $,
where $\delta > 0$ denotes the fixed delay of information. In this case, $\mathcal G^i_t$ represent the partial information in open-loop or closed-loop sense, respectively. Another example is that $W_i=(\widetilde W_i,\widetilde W_0)$ takes vector-valued Brownian motion including a common noise component $\widetilde W_0$, then $\mathcal G^i_t=\sigma\{\widetilde W_{i}(s),\Theta_i, 0 \leq s \leq t\}$ denotes the partial information in open-loop. Also, in case $\Theta_i=(\Theta_{i1}, \Theta_{i2})$, then $\mathcal G^i_t=\sigma\{ W_{i}(s),\Theta_{i1}, 0 \leq s \leq t\}$ denotes the partial information to underlying diversity.

\end{remark}Therefore, $\mathcal{B}_t^{i}=\mathcal{F}_t^{i} \bigvee \mathcal{H}_t^{i}$ and $\mathbb{B}^{i}:=\{\mathcal{B}^{i}_t\}_{0 \leq t \leq T}$ represents (full) decentralized information. Then we have the following structure inclusion chart:
\begin{equation*}
 \mathbb G^i\subset\{ \mathbb{F}^{i}({\footnotesize{\text{decentralized open-loop}}}), \ \mathbb{H}^{i}({\footnotesize{\text{decentralized closed-loop}}})\}  \ \subset \ \mathbb{B}^{i}({\footnotesize{\text{decentralized}}}) \  \subset \  \mathbb{F} \ ({\footnotesize{\text{full}}}).\vspace{-0.2cm}\end{equation*}
Noticing due to state-average $x^{(N)}$, $x_{i}(t) \notin \mathcal{F}^{i}_t$, thus, NO inclusion relations between open-loop $\mathbb{F}^{i}=\{\mathcal{F}^{i}_t\}_{0 \leq t \leq T}$ and closed-loop $\mathbb{H}^{i}=\{\mathcal{H}^{i}_t\}_{0 \leq t \leq T}$.
This is different to classical control where the open-loop information includes closed-loop information. Given information structure, we are ready to formulate the relevant admissible control sets:
\begin{itemize}
  \item \text{Centralized full-information  admissibility set:} \ $\mathcal{U}_{i}^{c,f}=
\{u_i(\cdot)|u_i(\cdot)\in L^2_{\mathbb F}(0,T;\Gamma)\}$.
  \item \text{Decentralized full-information open-loop admissibility set:}\\\text{\qquad \qquad}\qquad$\mathcal{U}_{i,op}^{d,f}=
\{u_i(\cdot)|u_i(\cdot)\in L^2_{\mathbb F^i}(0,T;\Gamma)\}$.
\item \text{Decentralized full-information  closed-loop admissibility set:}\\\text{\qquad \qquad} \qquad $\mathcal{U}_{i,cl}^{d,f}=
\{u_i(\cdot)|u_i(\cdot)\in L^2_{\mathbb H^i}(0,T;\Gamma)\}$.
  \item \text{Decentralized partial-information admissibility set:} \ $\mathcal{U}_{i}^{d,p}=
\{u_i(\cdot)|u_i(\cdot)\in L^2_{\mathbb G^i}(0,T;\Gamma)\}$.
\end{itemize}
We point out here $\mathbb G^{i}$ is general to include both open-loop or closed-loop partial information. Now we propose the following optimization problem:\\

\textbf{Problem LQG-MT.} Find a team strategy set $\bar{\mathbf{u}}(\cdot)=(\bar u_1(\cdot),\cdots,\bar u_N(\cdot))$ where $\bar u_i(\cdot)\in \mathcal U_i^{c,f}$, $1\leq i\leq N$, such that
\begin{equation*}
\mathcal J_{soc}^{(N)}(\bar{\mathbf{u}}(\cdot))=\inf_{u_{i}\in\mathcal U_{i}^{c,f},1\leq i\leq N}\mathcal J_{soc}^{(N)}(u_1(\cdot),\cdots,u_i(\cdot),\cdots,u_{N}(\cdot)).
\end{equation*}
Under some mild conditions on datum $(Q, R)$ (e.g., (A3)), it is possible to ensure the existence and uniqueness of optimal mean-field team strategy in a centralized sense. This can be proceeded by classical vector-optimization or control method but in a high-dimension setting because of the existence of large number of weakly-coupled team agents. However, such strategy, from a computational viewpoint, turns to be intractable because of the information requirement to collect all agents' states simultaneously. Instead, it is more tractable to consider some decentralized strategy for which only the local (distributed) information for given agent is needed. Moreover, considering the partial information pattern, we introduce the following definition on asymptotic social optimality.

\begin{definition}
A strategy set $\widetilde{\mathbf{u}}(\cdot)=(\widetilde u_1(\cdot),\cdots,\widetilde u_N(\cdot))$ with $\{\widetilde u_i\in\mathcal U_i^{d,p}\}_{i=1}^N$ is said to be $\varepsilon$-social optimal if there exists $\varepsilon=\varepsilon(N)>0$,
$\displaystyle{\lim_{N\rightarrow+\infty}}\varepsilon(N)=0$ such that
$$\frac{1}{N}(\mathcal J_{soc}^{(N)}
(\widetilde{\mathbf{u}}(\cdot))-\inf_{u\in\mathcal U_i^{c,f}}\mathcal J_{soc}^{(N)}(\mathbf{u}(\cdot)))\leq\varepsilon.$$
\end{definition}
\begin{remark}
In Remark \ref{re2.2}, we emphasize $W_i$ might be vector-valued Brownian motion including a common noise component. For simplicity, in the following we assume that $W_i$, $i=1,\cdots,N$ are independent one-dimensional Brownian motions. Note that for the case $W_i=(\widetilde W_i,\widetilde W_0)$ takes vector-valued Brownian motion including a common noise component $\widetilde W_0$ and $\widetilde W_i$, $i=1,\cdots,N$ being independent one-dimensional Brownian motions, the procedures in Section \ref{auxiliary problem} and Section \ref{decentralized strategy} are still workable. However, in this case $\mathbb E\alpha$ in \eqref{CC} should be the conditional expectation $\mathbb E[\alpha|\mathcal F^0_t]$ where $\{\mathcal F^0_t\}$ is the filtration generated by the common noise $\widetilde W_0$. For this kind of consistency system, please refer \cite{HHN2018} for more information.
\end{remark}

\section{Mean-field team analysis}\label{auxiliary problem}As discussed above, the centralized strategy based on traditional vector optimization/control,  turns to be inefficient to tackle the weakly-coupled but highly complex LQG MT. Alternatively, it is more desirable to construct some decentralized strategy using distributed information only. Such strategy construction might be proceeded using mean-field team analysis through the following steps:

(Step 1) applying person-by-person optimality to variational decomposition for generic agent;

(Step 2) constructing some auxiliary control problem using necessary (weakly) duality;

(Step 3) solving auxiliary control and determining limiting state-average by consistency condition;

(Step 4) verifying the asymptotic social optimality of derived decentralized team strategy.\\
We now proceed step by step to construct the distributed LQG-MT strategy.
\subsection{Person-by-person optimality}\label{p-b-p optimality}

As (Step 1), we would like to propose some variational decomposition for original \eqref{LQGMT-1} around centralized strategy (although we prefer to avoid its direct computation). The person-by-person optimality principle is thus adopted for this purpose, from standpoint of a generic agent. More details are as below.

Let $\{\bar u_i\in \mathcal U_i^{c,f}\}_{i=1}^N$ be
centralized optimal team strategy (its existence can be ensured under some mild convexity conditions. But, as discussed above, such strategies are intractable for real computation purpose because of ``curse of dimensionality"). Now consider the perturbation for given benchmark agent, say, $\mathcal A_i$ use the alternative strategy $u_i\in \mathcal U_i^{c,f}$ and all other agents still apply the strategy $\bar u_{-i}=(\bar u_1,\cdots,\bar u_{i-1},\bar u_{i+1},\cdots,\bar u_N)$. The realized state \eqref{state equation} corresponding to $(u_i,\bar u_{-i})$ and $(\bar u_i,\bar u_{-i})$ are denoted by $(x_1,\cdots,x_N)$ and $(\bar x_1,\cdots,\bar x_N)$, respectively. We denote agent index set as $\mathcal{I}=\{1, \cdots, N\}$. To start the variation decomposition, it is helpful to present the following causal-relation flow-chart first:
\begin{equation*}\begin{aligned}
 &\underbrace{\delta u_i=u_i-\bar u_i}_{\text{principal basic variation}}\Longrightarrow \underbrace{\delta x_i=x_i(u_i)-\bar x_i(\bar u_i)}_{\text{principal intermediate variation}}\Longrightarrow \underbrace{\delta x_j=x_j(x_i)-\bar x_j(\bar x_i)}_{\text{marginal variation}}\\
 &\Longrightarrow \underbrace{\delta\mathcal J_j(\d u_i)}_{\text{marginal cost variation}}=\mathcal J_j(u_i,\bar u_{-i})-\mathcal J_j(\bar u_i,\bar u_{-i}), j=1,\cdots,N,\\
&\Longrightarrow  \underbrace{\delta \mathcal J_{soc}^{(N)}(\delta u_i)}_{\text{total cost variation}}=\mathcal J_{soc}^{(N)}(u_i,\bar u_{-i})-\mathcal J_{soc}^{(N)}(\bar u_i,\bar u_{-i}),
\end{aligned}\end{equation*}
where $\delta u_i$ is the most basic variation ``block" for other variation structures; we write $x_i(u_i)$ to emphasize its dependence of $x_i$ on $u_i$, and similar for $\bar x_i(\bar u_i)$; we call $\delta x_i$ the \emph{principal intermediate} variation as it depends indirectly on basic $\delta u_i$ via principal state; similarly, $\delta x_j$ \emph{marginal variations} from point of $\mathcal{A}_{i};$ also $x_j(x_i)$ depends on $x_i$ via weak-coupling $x^{(N)},$ similar to $\bar x_j(\bar x_i)$. Moreover, from standpoint of $\mathcal{A}_{i},$ the variational equations for principal state $x_i$, and marginal states $\{x_{j}\}_{j \neq i}$ satisfy:
\begin{equation}\label{variation-i}\begin{aligned}
d\delta x_i=[A_{\Theta_i}\delta x_i+B\delta u_i+F\delta x^{(N)}]dt+[C\delta x_i+D_{\Theta_i}\delta u_i+\widetilde F\delta x^{(N)}]dW_i,\qquad\delta x_i(0)=0,
\end{aligned}\end{equation}
\begin{equation}\label{minor-j}\begin{aligned}
j \neq i, \quad d\delta x_j=[A_{\Theta_j}\delta x_j+F\delta x^{(N)}]dt+[C\delta x_j+\widetilde F\delta x^{(N)}]dW_j,\
\delta x_j(0)=0.
\end{aligned}\end{equation}
Denote $\delta x_{-i}=\sum_{j\neq i}\delta x_j$ the aggregate variation of marginal agents (benchmark to $\mathcal{A}_{i}$), so applying linear state-aggregation,
\begin{equation}\label{deltax-i}\begin{aligned}
d\delta x_{-i}=[\sum_{j\neq i}A_{\Theta_j}\delta x_{j}+(N-1)F\delta x^{(N)}]dt+\sum_{j\neq i}[C\delta x_j+\widetilde F\delta x^{(N)}]dW_j,\ \delta x_{-i}(0)=0.
\end{aligned}\end{equation}
Similarly, we can also obtain the variation of cost functionals as follows. For principal cost of $\mathcal{A}_{i}$:
 \begin{equation*}\begin{aligned}
\delta \mathcal J_{i}(\d u_i)=\mathbb E\int_0^T\Big[\langle Q(\bar x_i- H\bar x^{(N)}),\delta x_i- H\delta x^{(N)}\rangle+\langle R\bar u_i,\delta u_i\rangle\Big] dt.
\end{aligned}\end{equation*}
For marginal costs of $\mathcal{A}_{i}$:
 \begin{equation*}\begin{aligned}
\delta \mathcal J_{j}(\d u_i)=\mathbb E\int_0^T\langle Q(\bar x_j- H\bar x^{(N)}),\delta x_j- H\delta x^{(N)}\rangle dt,\qquad j\neq i.
\end{aligned}\end{equation*}
Therefore, the total variation of social cost, from the person-by-person variation of $\mathcal{A}_{i}$ side, becomes
\begin{equation*}\begin{aligned}
\delta \mathcal J_{soc}^{(N)}(\d u_i)=\mathbb E\int_0^T\Big[\sum_{j=1}^N\langle Q(\bar x_j- H\bar x^{(N)}),\delta x_j- H\delta x^{(N)}\rangle+\langle R\bar u_i,\delta u_i\rangle \Big]dt.
\end{aligned}\end{equation*}
We thus have the following variation decomposition on social cost differential:
 \begin{equation}\begin{aligned}\label{cost variation-social cost-2}
\delta \mathcal J_{soc}^{(N)}(\delta u_{i})
 =&\mathbb E\int_0^T\Big[\langle Q\bar x_i,\delta x_i\rangle-\langle (QH+HQ-HQH)\bar x^{(N)}, \delta x_i\rangle\\
  &-\langle (QH+HQ-HQH)\bar x^{(N)}, \sum_{j\neq i}\delta x_j\rangle+\sum_{j\neq i}\langle Q\bar x_j,\delta x_j\rangle+\langle R\bar u_i,\delta u_i\rangle\Big] dt\\
=:&I_1+I_2+I_3+I_4+I_5.\\
\end{aligned}\end{equation}
There arise five decomposition terms in \eqref{cost variation-social cost-2}. Among them, $I_5$ depends directly on the \emph{principal basic variation} $\delta u_i,$  whereas $I_1, I_2$ depend on \emph{principal intermediate variation} $\delta x_{i}$ that further depends on the basic $\delta u_i.$ Moreover, $I_3, I_4$ depend on the \emph{marginal variations} $\{\delta x_{j}\}_{j \neq i}$ that further depends on the principal ones $\delta x_{i}, \delta u_{i}.$ We denote $||\delta x_{i}||_{L^{2}}=(\mathbb E\int_0^T|\delta x_{i}|^2ds)^{1/2}$. By standard SDE estimation, $||\delta x_{i}||_{L^{2}} \leq (K+O(N^{-\frac{1}{2}}) )||\delta u_{i}||_{L^{2}}$ where $K$ is independent on $N$, and only depends on coefficients of \eqref{state equation}. Moreover, $||\delta x_{j}||_{L^{2}}= O(N^{-\frac{1}{2}}) ||\delta u_{i}||_{L^{2}}$ for $j \neq i.$ Also, we remark that in general, it is not true that $||\delta x_{i}||_{L^{2}}=O(||\delta u_{i}||_{L^{2}})$.

We aim to reformulate \eqref{cost variation-social cost-2} into some variation differential based on principal terms $\delta u_i, \delta x_i$ only and some auxiliary control problem can thus be constructed in Step 2. We may realize this objective through the following procedures.

First, we need asymptote the empirical state-average $\bar x^{(N)}$ in variations $I_2, I_3$ of \eqref{cost variation-social cost-2} by its mean-field limit using heuristic reasoning. Therefore, replacing $\bar x^{(N)}$ of $I_2, I_3$ in \eqref{cost variation-social cost-2} by state-average limit $\hat x$ (to be determined later in Step 3) will yield
\begin{equation}\label{eq15}\begin{aligned}
\delta \mathcal J_{soc}^{(N)}(\delta u_i)=&\mathbb E\int_0^T\Big[\langle Q\bar x_i,\delta x_i\rangle-\langle (QH+ H Q-HQH)\hat x,\delta x_i\rangle
-\langle (QH+ H Q\\
&-HQH)\hat x,\delta x_{-i}\rangle+
 \frac{1}{N}\sum_{j\neq i}\langle Q \bar x_j,N\delta x_{j}\rangle+\langle R\bar u_i,\delta u_i\rangle \Big]dt+\varepsilon_1\\
 =:&I_1+\widehat I_2+\widehat I_3+I_4+I_5+\varepsilon_1,
\end{aligned}\end{equation}
where
$$\varepsilon_1=\mathbb E\int_0^T\langle (QH+HQ- H Q H)(\hat x-\bar x^{(N)}),N\delta x^{(N)}\rangle dt.$$
Second, note that terms $I_1,\widehat I_2, I_5$ in \eqref{eq15} already depend on the principal variations $\delta u_i$ or $\delta x_i$. Thus, we need only analyze the limiting behavior for term $\widehat I_3$ and $I_4$. It is remarkable that $\widehat I_3, I_4$ respectively involve components: $\delta x_{-i}$ and $\frac{1}{N}\sum_{j\neq i}\langle Q \bar x_j,N\delta x_{j}\rangle$ that both depend on principal basic $\delta u_i$ in rather implicit manner.

Note that for $j\neq i$, $||\delta x_{j}||_{L^{2}}= O(N^{-\frac{1}{2}}) ||\delta u_{i}||_{L^{2}},$ so $\lim_{N \rightarrow +\infty}||\delta x_{j}||_{L^{2}}=0$. Therefore, we need introduce some limiting term $x_j^*$ to replace the re-scaled $N\delta x_j$ in rate $||x_j^*-N\delta x_j||=O(N^{-\frac{1}{2}})||\delta u_{i}||_{L^{2}}$. This helps us to deal with variation of $I_4$. In addition, we introduce limiting term $x^{**}=\int_\mathcal{S} x_\theta^{**}d\Phi(\theta)$ to replace $\delta x_{-i}$ in rate that $||x^{**}-\delta x_{-i}||=O(N^{-\frac{1}{2}})||\delta u_{i}||_{L^{2}}.$ This will help us to deal with variation $\widehat I_3.$  Moreover, by the independence between $\{\Theta_j\}, \{W_j\}$ and heuristic mean-field arguments, we construct the following coupled limiting system:

\begin{equation}\label{limit process of variation}\left\{\begin{aligned}
&dx_j^*=[A_{\Theta_j}x_j^*+F\delta x_i+F\int_\mathcal{S} x_\theta^{**}d\Phi(\theta)]dt+[Cx_j^*+\widetilde F \delta x_i+\widetilde F \int_\mathcal{S} x_\theta^{**}d\Phi(\theta)]dW_j,\\
&dx_\theta^{**}=[ A_\theta x_\theta^{**}+F\delta x_i+Fx_\theta^{**}]dt,\quad x_\theta^{**}(0)=0,\\
& x_j^*(0)=0,\quad j\neq i,\quad\theta\in\mathcal{S}.
\end{aligned}\right.\end{equation}
Therefore,
\begin{equation}\label{variation-conti-1}\begin{aligned}
\delta \mathcal J_{soc}^{(N)}(\delta u_{i})
  =&\mathbb E\int_0^T\Big[\langle Q\bar x_i,\delta x_i\rangle-\langle (QH+ H Q-HQH)\hat x,\delta x_i\rangle
-\langle (QH+ H Q\\
&-HQH)\hat x,x^{**}\rangle+
 \frac{1}{N}\sum_{j\neq i}\langle Q\bar x_j, x_j^*\rangle+\langle R\bar u_i,\delta u_i\rangle\Big] dt+\sum_{l=1}^3\varepsilon_l\\
 =:&I_1+\widehat I_2+\widetilde I_3+\widetilde I_4+I_5+\sum_{l=1}^3\varepsilon_l,
\end{aligned}\end{equation}
where
\begin{equation*}\left\{\begin{aligned}
&\varepsilon_2= \mathbb{E}\int_0^T\langle (QH+ H Q-HQH)\hat x,x^{**}-\delta x_{-i}\rangle dt,\\
&\varepsilon_3=\mathbb E\int_0^T\frac{1}{N}\sum_{j\neq i}\langle Q\bar x_j,N\delta x_j-x^*_j\rangle dt.
\end{aligned}\right.\end{equation*}
Noting $\widetilde{I}_4$ of \eqref{variation-conti-1} connects to a sequence of exchangeable random variables  $\{\int_0^T\langle Q\bar x_j, x_j^*\rangle dt\}\\ \in L^{1}_{\mathcal F_T}(\Omega;\mathbb R)$.  By de Finetti theorem, they are \emph{conditionally} independent identically distributed with respect to some tail sigma-algebra. Also, it is observable that such tail sigma-algebra should depend on $\delta x_{i}$ in rather implicit way. Then, we may apply conditional law of large number to identify the related average. We present some weak duality approach to break away $\delta \mathcal J_{soc}^{(N)}(\d u_i)$ from dependence on $x_j^*$ and $x^{**}$.

\subsection{Weak construction duality}
In order to break away $\delta \mathcal J_{soc}^{(N)}(\d u_i)$ of \eqref{variation-conti-1} from direct dependence on $x_j^*$ and $x^{**}$ (see $\widetilde{I}_3,\widetilde{I}_4$),
 we introduce the following adjoint equations $\{y_1^j\}_{j\neq i}$ and $y_2^\theta$ satisfying:
\begin{equation}\label{adjoint processes}\left\{\begin{aligned}
&dy_1^j=\alpha_1^j dt+\beta_{1}^{jj}dW_j+\sum_{l=1,l\neq j}^N\beta_{1}^{jl}dW_l,\qquad y_1^j(T)=0,\quad j\neq i,\\
&dy_2^\theta=\alpha_2^\theta dt,\qquad y_2^\theta(T)=0,\quad \theta\in\mathcal{S},
\end{aligned}\right.\end{equation}where $\{W_l\}_{l \neq i}$ are some Brownian motion copies matching all marginal agents in large-scaled system, from the benchmark point of $\mathcal{A}_{i}.$ We remark that $y_2^\theta$ is parameterized by diversity index in continuum support: $\theta \in \mathcal{S},$ while $y_1^j$ is parameterized by marginal agent index $j \neq i \in \mathcal{I}.$ Accordingly, the duality below should be some weak construction in \emph{distributional} and \emph{agent-wise} sense, respectively indexed by $\theta \in \mathcal{S}$ and $j \in \mathcal{I}.$ To start, first apply It\^{o}'s formula to $\langle y_1^j,x_j^*\rangle$ for each marginal agent index $j \neq i,$
integrating from $0$ to $T$ and taking expectation, by countable agent-wise addition for all $j \in \mathcal{I}\backslash i$,
\begin{equation}\label{new-22}\begin{aligned}
0=&\mathbb E\int_0^T\Big[\frac{1}{N}\sum_{j\neq i}\langle\alpha_1^j+A_{\Theta_j}^\top y_1^j+C^\top \beta_{1}^{jj},x_j^*\rangle+\frac{1}{N}\sum_{j\neq i}\langle F^\top y_1^j
+\widetilde F^\top\beta_{1}^{jj},x^{**}\rangle\\
&+\frac{1}{N}\sum_{j\neq i}
\langle F^\top y_1^j+\widetilde F^\top\beta_{1}^{jj},\delta x_i\rangle\Big]dt.
\end{aligned}\end{equation}
Similarly, by distributed integral on all $\theta \in \mathcal{S},$
\begin{equation}\label{new-25}\begin{aligned}
0=\int_0^T\Big[\int_\mathcal{S}\langle\alpha_2^\theta+ A_\theta^\top  y_2^\theta+F^\top y_2^\theta,x_\theta^{**}\rangle d\Phi(\theta)+\int_\mathcal{S}\langle F^\top y_2^\theta,\delta x_i\rangle d\Phi(\theta)\Big]dt.
\end{aligned}\end{equation}
Combing \eqref{new-22} and \eqref{new-25} with \eqref{variation-conti-1}
\begin{equation}\label{new-26}\begin{aligned}
&\delta \mathcal J_{soc}^{(N)}(\delta u_{i})
=\mathbb E\int_0^T\Big[\langle Q\bar x_i,\delta x_i\rangle-\langle (QH+ H Q-HQH)\hat x,\delta x_i\rangle-\frac{1}{N}\sum_{j\neq i}
\langle F^\top y_1^j+\widetilde F^\top\beta_{1}^{jj},\delta x_i\rangle\\
&-\int_\mathcal{S}\langle F^\top y_2^\theta,\delta x_i\rangle d\Phi(\theta)+\langle R\bar u_i,\delta u_i\rangle\Big] dt+\mathbb E\int_0^T\Big[\frac{1}{N}\sum_{j\neq i}\langle Q\bar x_j-\alpha_1^j-A_{\Theta_j}^\top y_1^j-C^\top \beta_{1}^{jj}, x_j^*\rangle\Big]dt\\
&-\mathbb E\int_0^T\int_{\mathcal S}\langle (QH+ H Q-HQH)\hat x+\frac{1}{N}\sum_{j\neq i}(F^\top y_1^j
+\widetilde F^\top\beta_{1}^{jj})+\alpha_2^\theta+ A_\theta^\top  y_2^\theta+F^\top y_2^\theta ,x_\theta^{**}\rangle d\Phi(\theta)dt\\
&+\sum_{l=1}^3\varepsilon_l.\\
\end{aligned}\end{equation}
Let
\begin{equation*}\left\{\begin{aligned}
\alpha_1^j=&Q\bar x_j-A_{\Theta_j}^\top y_1^j-C^\top\beta_{1}^{jj},\\
\alpha_2^\theta
=&-(QH+ H Q-HQH)\hat x-F^\top\mathbb E y_1^j
-\widetilde F^\top\mathbb E\beta_{1}^{jj}-A_\theta^\top y_2^\theta -F^\top  y_2^\theta,
\end{aligned}\right.\end{equation*}
hence we reach the following weak duality adjoint process:
\begin{equation}\label{explicit adjoint}\left\{\begin{aligned}
&dy_1^j=\left(Q\bar x_j-A_{\Theta_j}^\top y_1^j-C^\top\beta_{1}^{jj}\right) dt+\beta_{1}^{jj}dW_j+\sum_{l=1,l\neq j}^N\beta_{1}^{jl}dW_l,\\
&dy_2^\theta=-\Big((QH+ H Q-HQH)\hat x-(F^\top \mathbb Ey_1^j+\widetilde F^\top\mathbb E \beta_1^{jj})-A_\theta^\top y_2^\theta -F^\top  y_2^\theta\Big)dt,\\
& y_1^j(T)=0,\ j\neq i,\ y_2^\theta(T)=0,\ \theta\in\mathcal{S}.
\end{aligned}\right.\end{equation}
We point out that above system can be rewritten as
\begin{equation*}\left\{\begin{aligned}
&dy_1^j=\left(Q\bar x_j-A_{\Theta_j}^\top y_1^j-C^\top\beta_{1}^{jj}\right) dt+\beta_{1}^{jj}dW_j+\sum_{l=1,l\neq j}^N\beta_{1}^{jl}dW_l,\\
&dy_2^{\Theta}=-\left((QH+ H Q-HQH)\hat x-(F^\top \mathbb Ey_1^j+\widetilde F^\top\mathbb E \beta_1^{jj})-A^{\top}_\Theta y_2^\Theta -F^\top y_2^{\Theta}\right)dt,\\
& y_1^j(T)=0,\ j\neq i,\ y_2^\Theta(T)=0.
\end{aligned}\right.\end{equation*}
We remark that $y_2^{\Theta}$ is a degenerate BSDE by noting $\Theta \in \mathcal{F}_{0}$. Also, it is not necessary to specify any dependence assumption between $\Theta_{j}$ and $\Theta$ since $y_{1}^{j}$ and $y_2^\Theta$ get coupled only through expectation operator. In other words, their coupling here and further variant in consistency condition, only depend on the expectation in distribution sense. Again, this is why we term the resultant duality as weak-duality.
Substituting \eqref{explicit adjoint} into \eqref{new-26}, we have
\begin{equation*}\begin{aligned}
\delta \mathcal J_{soc}^{(N)}(\delta u_{i})
  =&\mathbb E\int_0^T\Big[\langle Q\bar x_i,\delta x_i\rangle-\langle (QH+ H Q-HQH)\hat x,\delta x_i\rangle-\frac{1}{N}\sum_{j\neq i}
\langle F^\top y_1^j+\widetilde F^\top\beta_{1}^{jj},\delta x_i\rangle\\
&-\int_\mathcal{S}\langle F^\top y_2^\theta,\delta x_i\rangle d\Phi(\theta)+\langle R\bar u_i,\delta u_i\rangle\Big] dt+\sum_{l=1}^4\varepsilon_l,\\
\end{aligned}\end{equation*}
where
\begin{equation*}\begin{aligned}\label{}
&{\varepsilon}_{4}=\mathbb E\int_0^T\langle F^\top(\mathbb E[y_1^j]-\frac{1}{N}\sum_{j\neq i}y_1^j)+\widetilde F^\top(\mathbb E[\beta_1^{jj}]-\frac{1}{N}\sum_{j\neq i}\beta_{1}^{jj}) ,x^{**}\rangle dt.\\
\end{aligned}\end{equation*}
We observe that the initial terms such as $\langle Q\bar x_j, x_j^*\rangle$ in \eqref{variation-conti-1}, is now reformulated  with some inner product between principal intermediate variation $\delta x_{i}$ and some quantities in terms by $y_{2}^{\theta}$ and $y_{1}^{j}$ in an agent-wise (i.e., $j \neq i$) manner. Then, we can identify the tail filtration for exchangeable $\{\int_0^T\langle Q\bar x_j, x_j^*\rangle dt\}_{j \neq i}$ based on $\delta x_{i}$ with a degenerated filtration. So, applying conditional law of large number, and noticing $\{y_1^j$, $j\neq i\}$ are identical distributed, we reach the following representation with expectation operator:
\begin{equation}\label{variation-conti-2}\begin{aligned}
\delta \mathcal J_{soc}^{(N)}(\delta u_{i})
 =&\mathbb E\int_0^T\Big[\langle Q\bar x_i,\delta x_i\rangle-\langle (QH+ H Q-HQH)\hat x+F^\top \mathbb{E}[y_1]+\widetilde F^\top\mathbb E[\beta_1^{1}]\\
 &+F^\top \int_\mathcal{S} y_2^\theta d\Phi(\theta),\delta x_i\rangle
+\langle R\bar u_i,\delta u_i\rangle \Big] dt+\sum_{l=1}^{5}\varepsilon_l,\\
\end{aligned}\end{equation}
where $y_1$ (depending on $\bar x_1$, that is the optimized state for generic agent) is some copy with same distribution for generic $y_{1}^{j}$:
\begin{equation}\left\{\begin{aligned}\label{y1,y2theta}
&dy_1=[Q\bar x_1-A_{\Theta}^\top y_1-C^\top\beta_{1}^{1}] dt+\beta_{1}^{1}dW_1+\sum_{l=1,l\neq 1}^N\beta_{1}^{l}dW_l,\\
&dy_2^\theta=[-(QH+ H Q-HQH)\hat x-(F^\top \mathbb Ey_1+\widetilde F^\top\mathbb E \beta_{1}^{1})- A_\theta^\top y_2^\theta-F^\top y_2^\theta ]dt,\\
 &y_1(T)=0,\qquad y_2^\theta(T)=0,\quad\theta\in\mathcal{S},
\end{aligned}\right.\end{equation}
and
\begin{equation*}\begin{aligned}\label{}
&\varepsilon_{5}=\mathbb E\int_0^T\langle F^\top(\mathbb E[y_1]-\frac{1}{N}\sum_{j\neq i}y_1^j)+\widetilde F^\top(\mathbb E[\beta_1^{1}]-\frac{1}{N}\sum_{j\neq i}\beta_{1}^{jj}) ,\delta x_i\rangle dt.\\
\end{aligned}\end{equation*}
We remark that $y_1$ has the same distribution with generic $y_{1}^{j}$, thus we call above procedure as weak duality.
We point out all variations terms in \eqref{variation-conti-2}, are now directly depending only on principal (basic, or intermediate) variations. Thus, we now formulate a decentralized auxiliary cost differential $\delta J_i(\delta u_{i})$:
\begin{equation}\label{variation-conti-3}\begin{aligned}
\delta J_i(\delta u_{i})
 =\mathbb E\int_0^T\Big[&\langle Q\bar x_i,\delta x_i\rangle-\langle (QH+ H Q-HQH)\hat x+F^\top \hat{y}_1+\widetilde F^\top\hat{\beta}_1\\
 &+F^\top \int_\mathcal{S} y_2^\theta d\Phi(\theta),\delta x_i\rangle+\langle R\bar u_i,\delta u_i\rangle\Big] dt.
\end{aligned}\end{equation}

\begin{remark}
There are four undetermined terms in \eqref{variation-conti-3} respectively \emph{:} $\hat x$ by \eqref{eq15} is the state-average limit; $(\hat y_1=\mathbb E[y_1],\hat \beta_1=\mathbb E[\beta_1^{1}],y_2^\theta)$ is from \eqref{y1,y2theta} because of the weak construction duality procedure. All these terms, especially $\hat x$, will be determined by consistency condition (CC) in Section \ref{decentralized strategy}.
\end{remark}
\begin{remark}
In \eqref{variation-conti-3}, we introduce the
first variation of auxiliary cost functional $\delta J_i(\d u_i)$ and ignore the error term $\varepsilon_l$, $l=1,\cdots,5$. The convergence rate estimation of these terms and the rigorous
proofs will be given in Section \ref{asymptotic optimality}.
\end{remark}

\section{Auxiliary control problem and consistency condition}\label{decentralized strategy}
\subsection{Auxiliary control with double-projection}
By \eqref{variation-conti-3}, we can introduce the following (auxiliary control (AC) problem) for a generic $\mathcal{A}_{i}$:\\
\begin{equation*}\text{(AC):}\left\{
\begin{aligned}
&\text{ Minimize }
\begin{aligned}
J_i(u_i(\cdot))
 =&\frac{1}{2}\mathbb E\int_0^T\Big[\langle Q x_i, x_i\rangle-2\langle \Xi, x_i\rangle+\langle R u_i, u_i\rangle\Big] dt,
      \end{aligned}\\
&\text{ subject to }
\begin{aligned}
dx_i(t)=[A_{\Theta_i}x_i+Bu_i+F\hat x]dt+[Cx_i+D_{\Theta_i}u_i+\widetilde  F\hat x]dW_i(t), x_i(0)=\xi,
\end{aligned}
\end{aligned}\right.
\end{equation*}
with
\begin{equation}\label{undetermined}
\Xi(t; \hat{x}, y_2^\theta,\hat y_1,\hat\beta_1)=(QH+ H Q-HQH)\hat x+F^\top \hat{y}_1+\widetilde F^\top\hat{\beta}_1+F^\top \int_\mathcal{S} y_2^\theta d\Phi(\theta),
\end{equation}
where $\hat x$ is the limiting state-average term introduced in \eqref{eq15}; $(y_2^\theta,\hat y_1,\hat\beta_1)$ depends on $\hat x$ satisfying dynamics \eqref{y1,y2theta}. Also, we remark that $\hat y_1$ depends on optimal state $\bar{x}_{j}.$

We will apply stochastic maximum principle to study \textbf{Problem (AC)}. To this end, we introduce the following first-order adjoint equation:
\begin{equation*}\begin{aligned}
dp_i(t)=-[A_{\Theta_i}^\top p_i+Qx_i-\Xi+C^\top q_i]dt+q_idW_i(t),\qquad p_i(T)=0.
\end{aligned}\end{equation*}
Let $u_i^*$ be the optimal control and $(x_i^*,p_i^*,q_i^*)$ the corresponding state and adjoint state. For any $u_i\in L^2_{\mathbb G^i}(0,T;\mathbb R^m)$ such that  $u_i^*+u_i\in\mathcal U_{i,op}^{d,p}$, we have $u_i^\epsilon:=u_i^*+\epsilon u_i\in\mathcal U_{i,op}^{d,p}$. The corresponding state and adjoint state with respect to $u_i^\epsilon$ are denoted by $(x_i^\epsilon,p_i^\epsilon,q_i^\epsilon)$. Introduce the following variational equation
\begin{equation*}\begin{aligned}
dy_i(t)=[A_{\Theta_i}y_i+Bu_i]dt+[Cy_i+D_{\Theta_i}u_i]dW_i(t),\qquad y_i(0)=0.
\end{aligned}\end{equation*}
Applying It\^{o}'s formula to $\langle p_i,y_i\rangle$, by the optimality of $u^*_i$ (i.e., $J_i(u_i^\epsilon)-J_i(u_i^*)\geq0$), we have
$$\mathbb E\int_0^T\langle Ru_i^*+B^\top p_i+D_{\Theta_i}^\top q_i,u_i\rangle ds\geq0.$$
For any $0\leq t\leq T$ and $\mathcal G^i_t$-measurable random variable $\eta_i$,
let
\begin{equation*}u_i^*(s)+u_i(s)=\left\{\begin{aligned}&u_i^*(s),\quad &s\notin[t,t+\epsilon];\\
&\eta_i,&s\in[t,t+\epsilon].\end{aligned}\right.\end{equation*}
Therefore,
$$\frac{1}{\epsilon}\mathbb E\int_t^{t+\epsilon}\langle Ru_i^*+B^\top p_i+D_{\Theta_i}^\top q_i,\eta_i-u_i^*\rangle ds\geq0.$$
Let $\epsilon\rightarrow0$, we have
$$\mathbb E\langle R(t)u_i^*(t)+B^\top(t)p_i^*(t)+D_{\Theta_i}^\top(t)q_i^*(t),\eta_i-u^*_i(t)\rangle\geq0,\qquad t\in[0,T].$$
For any $v\in\Gamma$ and $A\in\mathcal G^i_t$, define $\eta_i=vI_A+u_i^*(t)I_{A^c}$, we have
$$\mathbb E\langle R(t)u_i^*(t)+B^\top(t)p_i^*(t)+D_{\Theta_i}^\top(t)q_i^*(t),v-u^*_i(t)\rangle I_A\geq0,\qquad t\in[0,T].$$
Since $A\in\mathcal G^i_t$ is arbitrary, we have
$$\mathbb E[\langle R(t)u_i^*(t)+B^\top(t)p_i^*(t)+D_{\Theta_i}^\top(t)q_i^*(t),v-u^*_i(t)\rangle|\mathcal G^i_t]\geq0,\qquad t\in[0,T],\ \mathbb P-a.s.$$
i.e.,
\begin{equation}\label{SMP}\langle -R(t)u_i^*(t)+\mathbb E[-B^\top(t)p_i^*(t)-D_{\Theta_i}^\top(t)q_i^*(t)|\mathcal G^i_t],v-u^*_i(t)\rangle\leq0,\ t\in[0,T],\ \mathbb P-a.s.
\end{equation}
Since $v\in\Gamma$ is arbitrary and $\Gamma$ is a closed convex set, it follows from the well-known results of convex analysis that \eqref{SMP} is equivalent to
\begin{equation}\label{optimal control}
u_i^*(t)=\mathbf P_{\Gamma}[R^{-1}\mathbb E[-B^\top p_i^*(t)-D_{\Theta_i}^\top q_i^*(t)|\mathcal G^i_t]],\ a.e.\ t\in[0,T],\ \mathbb P-a.s.,
\end{equation}
where $\mathbf P_{\Gamma}[\cdot]$ is the projection mapping from $\mathbb R^m$ to its closed convex subset $\Gamma$ under the norm $\|v\|_{R}^2:=\langle R^{\frac{1}{2}}v,R^{\frac{1}{2}}v\rangle$. We point out that there involves two projections in \eqref{optimal control}, because of the input constraint and partial information constraint. This differs from \cite{HHL2017,HHN2018} which include only single-projection on input set. Furthermore, the two projections are non-commutative due to above maximum principle arguments.
In this case, the related Hamiltonian system for (AC) problem becomes
\begin{equation}\label{Hamiltonian system}\left\{\begin{aligned}
dx_i^*=&\Big[A_{\Theta_i}x_i^*+B\mathbf P_{\Gamma}[R^{-1}\mathbb E[-B^\top p_i^*(t)-D_{\Theta_i}^\top q_i^*(t)|\mathcal G^i_t]]+F\hat x\Big]dt\\
&+\Big[Cx_i^*+D_{\Theta_i}\mathbf P_{\Gamma}[R^{-1}\mathbb E[-B^\top p_i^*(t)-D_{\Theta_i}^\top q_i^*(t)|\mathcal G^i_t]]+\widetilde  F\hat x\Big]dW_i(t),\\
dp_i^*=&-[A_{\Theta_i}^\top p_i^*+Qx_i^*-\Xi+C^\top q_i^*]dt+q_i^*dW_i(t),\\
 x_i^*(0)=&\xi,\quad p_i^*(T)=0,
\end{aligned}\right.\end{equation}
which is a fully-coupled FBSDEs with double-projection: the mapping on input convex-closed set, and the filtering for partial information (i.e., conditional expectation on sub-space).

\subsection{Consistency condition}\label{Consistency condition}

In this section, we will characterize the undetermined processes, especially state-average limit $\hat x$, in \eqref{undetermined} via some consistency matching scheme. Given the Hamiltonian system by \eqref{Hamiltonian system},  all agents should apply some exchangeable team decisions $\{{u}_{i}^*\}_{i=1}^{N}$ and the realized states should be as follows: \begin{equation*}\label{}\left\{\begin{aligned}
dx_i^*=&\Big[A_{\Theta_i}x_i^*+B\mathbf P_{\Gamma}[R^{-1}\mathbb E[-B^\top p_i^*(t)-D_{\Theta_i}^\top q_i^*(t)|\mathcal G^i_t]]+Fx^{*,(N)}\Big]dt\\
&+\Big[Cx_i^*+D_{\Theta_i}\mathbf P_{\Gamma}[R^{-1}\mathbb E[-B^\top p_i^*(t)-D_{\Theta_i}^\top q_i^*(t)|\mathcal G^i_t]]+\widetilde  Fx^{*,(N)}\Big]dW_i(t),\\
 x_i^*(0)=&\xi,
\end{aligned}\right.\end{equation*}
where $x^{*,(N)}=\frac{1}{N}\sum_{i=1}^Nx_i^*$ and $(p_i^*,q_i^*)$ is the solution of \eqref{Hamiltonian system}. Making all such exchangeable strategies aggregated, and applying de Finetti theorem,  we can obtain the limiting system by identifying $\hat{x}=\mathbb{E}x^{*}$,
\begin{equation}\label{limiting Hamilton}\left\{\begin{aligned}
d\widetilde x=&\Big[A_{\Theta}\widetilde x+B\mathbf P_{\Gamma}[R^{-1}\mathbb E[-B^\top\widetilde  p(t)-D_{\Theta}^\top\widetilde  q(t)|\mathcal G_t]]+F\mathbb E \widetilde x\Big]dt\\
&+\Big[C\widetilde x+D_{\Theta}\mathbf P_{\Gamma}[R^{-1}\mathbb E[-B^\top \widetilde p(t)-D_{\Theta}^\top \widetilde q(t)|\mathcal G_t]]+\widetilde  F\mathbb E \widetilde x\Big]dW(t),\\
d\widetilde p=&-\Big[A_{\Theta}^\top \widetilde p+Q\widetilde x-
(QH+ H Q-HQH)\mathbb E\widetilde x-F^\top \hat{y}_1-\widetilde F^\top\hat{\beta}_1
\\
&-F^\top \int_\mathcal{S} y_2^\theta d\Phi(\theta)+C^\top \widetilde q\Big]dt+\widetilde qdW(t),\\
 \widetilde x(0)=&\xi,\quad \widetilde p(T)=0,
\end{aligned}\right.\end{equation}
where $\Theta$ is a random variable with distribution defined in (A1), $W(t)$ is a generic Brownian motion independent of $\Theta$, $\mathcal G$ is sub-filtration representing the partial information and $(\hat y_1=\mathbb E[y_1],\hat \beta_1=\mathbb E[\beta_1^{1}],y_2^\theta)$ is from \eqref{y1,y2theta}.
Note that we suppress subscript $i$ in \eqref{limiting Hamilton} as all agents are statistically identical in the distribution sense. Combing with
\eqref{y1,y2theta}, we will obtain consistency condition (CC) of \textbf{Problem LQG-MT}. For simplicity, define
$$\mathcal E_t[-B^\top \gamma-D_{\Theta}^\top \vartheta]=\mathbb E[-B^\top \gamma-D_{\Theta}^\top \vartheta|\mathcal G_t].$$
Hence we have the following result.
 \begin{theorem}\label{CC of LQGMT}
The undetermined parameters of \eqref{undetermined} can be determined by
$$(\hat x,\hat y_1,\hat\beta_1, y_2^\theta)=(\mathbb E \alpha,\mathbb E\check y_1,\mathbb E\check\beta_1,\check y_2^\theta),$$ where $(\alpha,\gamma,\vartheta,\check y_1,\check\beta_1,\check y_2^\theta)$ is the solution of the consistency condition of \textbf{\emph{Problem LQG-MT}}:
\begin{equation}\label{CC}\left\{\begin{aligned}
&d\alpha=[A_\Theta\alpha+B\mathbf P_{\Gamma}[R^{-1}\mathcal E_t[-B^\top \gamma-D_{\Theta}^\top \vartheta]]+F\mathbb E\alpha]dt\\
&\qquad+[C\alpha+D_\Theta\mathbf P_{\Gamma}[R^{-1}\mathcal E_t[-B^\top \gamma-D_{\Theta}^\top \vartheta]]+\widetilde F\mathbb E\alpha]dW,\\
&d\gamma=[-Q\alpha+(QH+ H Q-HQH)\mathbb E\alpha-A_\Theta^\top \gamma+F^\top \int_\mathcal{S}\check y_2^\theta d\Phi(\theta)+F^\top \mathbb E\check{y}_1\\
&\qquad-C^\top \vartheta+\widetilde F^\top\mathbb E\check{\beta}_1]dt+\vartheta dW,\\
&d\check y_1=[Q\alpha-A_\Theta^\top \check y_1-C^\top\check \beta_1] dt+\check \beta_1dW,\\
&d\check y_2^\theta=[-(QH+ H Q-HQH)\mathbb E\alpha-F^\top \mathbb E\check y_1-\widetilde F^\top\mathbb E \check \beta_1- A_\theta^\top \check y_2^\theta-F^\top \check y_2^\theta]dt,\\
 &
\alpha(0)=\xi,\quad \gamma(T)=0,\quad \check y_1(T)=0,\quad \check y_2^\theta(T)=0,\quad\theta\in\mathcal{S}.
\end{aligned}\right.\end{equation}
\end{theorem}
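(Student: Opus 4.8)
The plan is to assemble the consistency system \eqref{CC} by tracking how each undetermined quantity in \eqref{undetermined} is generated and then closing the loop through the fixed‑point (consistency matching) identity $\hat x=\mathbb{E}\alpha$, $\hat y_1=\mathbb{E}\check y_1$, $\hat\beta_1=\mathbb{E}\check\beta_1$, $y_2^\theta=\check y_2^\theta$. Concretely, I would start from the limiting Hamiltonian system \eqref{limiting Hamilton}, relabel $(\widetilde x,\widetilde p,\widetilde q)$ as $(\alpha,\gamma,\vartheta)$, and observe that its forward equation is exactly the first line of \eqref{CC}: the control term is the double projection $\mathbf P_\Gamma[R^{-1}\mathcal E_t[-B^\top\gamma-D_\Theta^\top\vartheta]]$ coming from the optimality characterization \eqref{optimal control} of \textbf{Problem (AC)}, and $F\mathbb E\alpha$, $\widetilde F\mathbb E\alpha$ appear because the consistency requirement forces $\hat x=\mathbb{E}x^{*}$, i.e.\ the state‑average limit equals the spatial/statistical mean of the generic optimal state. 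The backward $\gamma$‑equation is the adjoint equation of \textbf{Problem (AC)} with $\Xi$ replaced by its expression \eqref{undetermined}, again under the substitution $\hat x\mapsto\mathbb E\alpha$, $y_2^\theta\mapsto\check y_2^\theta$, $\hat y_1\mapsto\mathbb E\check y_1$, $\hat\beta_1\mapsto\mathbb E\check\beta_1$.

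Next I would bring in the weak‑duality adjoint pair \eqref{y1,y2theta}. The equation for $y_1$ (dropping the non‑diagonal martingale terms $\sum_{l\neq1}\beta_1^l dW_l$, which vanish under the decentralized interpretation and only the diagonal noise survives in the reduced consistency system) becomes the third line of \eqref{CC} for $\check y_1$ with its single driving Brownian motion $W$ and diffusion $\check\beta_1$; its terminal condition is $0$. The $\theta$‑indexed ODE for $y_2^\theta$ in \eqref{y1,y2theta} is copied verbatim as the fourth line of \eqref{CC} for $\check y_2^\theta$, with $\hat x\mapsto\mathbb E\alpha$, $\mathbb E y_1\mapsto\mathbb E\check y_1$, $\mathbb E\beta_1^1\mapsto\mathbb E\check\beta_1$; the terminal condition is again $0$, and — as already noted after \eqref{y1,y2theta} — this is a degenerate (driftless‑noise) backward equation parameterised by the continuum index $\theta\in\mathcal S$, with the spatial mean $\int_\mathcal S\check y_2^\theta\,d\Phi(\theta)$ feeding back into the $\gamma$‑equation through $F^\top\int_\mathcal S\check y_2^\theta d\Phi(\theta)$. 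Finally, taking expectations of the $\alpha$‑, $\check y_1$‑, $\check\beta_1$‑equations and matching against the definitions of $\hat x,\hat y_1,\hat\beta_1$ in the auxiliary problem shows the fixed point is consistent, which is precisely the assertion $(\hat x,\hat y_1,\hat\beta_1,y_2^\theta)=(\mathbb E\alpha,\mathbb E\check y_1,\mathbb E\check\beta_1,\check y_2^\theta)$.

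The genuinely delicate step is the passage from the finite‑$N$ realized Hamiltonian system to the limiting one, i.e.\ justifying $x^{*,(N)}\to\mathbb E x^*=\hat x$ and the analogous limits $\frac1N\sum_{j\neq i}y_1^j\to\mathbb E\check y_1$, $\frac1N\sum_{j\neq i}\beta_1^{jj}\to\mathbb E\check\beta_1$, $\delta x_{-i}\to\int_\mathcal S x_\theta^{**}d\Phi(\theta)$. Here the point is that the team strategies $\{u_i^*\}$ and states $\{x_i^*\}$ are (quasi) exchangeable — this is exactly what the unified homogenisation construction buys us — so de Finetti's theorem applies: conditionally on the appropriate (degenerate, in the no‑common‑noise case) tail $\sigma$‑field the summands are i.i.d., and a conditional law of large numbers gives the averages as conditional expectations, which here reduce to ordinary expectations. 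The continuum heterogeneity enters only through the index $\theta$ and is handled by the spatial integral $\int_\mathcal S\cdot\,d\Phi(\theta)$; the coupling between the $\Theta_j$‑parameterised $\check y_1$ and the $\theta$‑parameterised $\check y_2^\theta$ is only through expectation operators, so no joint‑law assumption on $(\Theta_j,\theta)$ is needed — this is the sense in which the duality is "weak". I would therefore present the theorem's proof as: (1) recall \eqref{optimal control} and \eqref{Hamiltonian system}; (2) substitute the consistency ansatz $\hat x=\mathbb E\alpha$ etc.\ and invoke exchangeability + de Finetti + conditional LLN to pass to the limit; (3) collect \eqref{limiting Hamilton} together with the reduced form of \eqref{y1,y2theta} and read off \eqref{CC}. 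The well‑posedness of this coupled mean‑field FBSDE with double projection and spatial mean is not claimed here — it is deferred to Section \ref{well-posedness of CC} — so within this theorem the only real work is the limiting argument and bookkeeping, with the exchangeability/de Finetti step being the crux.
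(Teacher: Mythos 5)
Your proposal is correct and follows essentially the same route as the paper, which gives no separate formal proof but obtains \eqref{CC} exactly as you describe: the Hamiltonian system \eqref{Hamiltonian system} with the realized average $x^{*,(N)}$ is passed to the limit via exchangeability and de Finetti to yield \eqref{limiting Hamilton} under the identification $\hat x=\mathbb{E}x^{*}$, and this is combined with the weak-duality adjoints \eqref{y1,y2theta} (reduced to the single-noise copy, legitimate since the coupling is only through expectations) and the consistency matching $(\hat x,\hat y_1,\hat\beta_1,y_2^\theta)=(\mathbb E\alpha,\mathbb E\check y_1,\mathbb E\check\beta_1,\check y_2^\theta)$. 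Your identification of the exchangeability/conditional-LLN step as the crux, and your deferral of well-posedness to Section \ref{well-posedness of CC}, both match the paper's treatment.
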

\begin{remark}
\eqref{CC} is a new type of fully-couple FBSDEs with double-projection (projection mapping on the convex-closed sub-set and partial-information sub-space). Moreover, both temporal variable $t$ and spatial variable $\theta$ appear in \eqref{CC}. Considering this, we can rewrite \eqref{CC} in the following more compact form:
\begin{equation}\label{product space}\left\{\begin{aligned}
&d\alpha=[A_\Theta\alpha+B\mathbf P_{\Gamma}[R^{-1}\mathcal E_t[-B^\top \gamma-D_{\Theta}^\top \vartheta]]+F\mathbb E\alpha]dt\\
&\qquad+[C\alpha+D_\Theta\mathbf P_{\Gamma}[R^{-1}\mathcal E_t[-B^\top \gamma-D_{\Theta}^\top \vartheta]]+\widetilde F\mathbb E\alpha]dW,\\
&d\gamma=[-Q\alpha+(QH+ H Q-HQH)\mathbb E\alpha-A_\Theta^\top \gamma+F^\top \mathbb E\check y_2^\Theta +F^\top \mathbb E\check{y}_1-C^\top \vartheta\\
&\qquad+\widetilde F^\top\mathbb E\check{\beta}_1]dt+\vartheta dW(t),\\
&d\check y_1=[Q\alpha-A_\Theta^\top \check y_1-C^\top\check \beta_1] dt+\check \beta_1dW,\\
&d\check y_2^\Theta=[-(QH+ H Q-HQH)\mathbb E\alpha-F^\top \mathbb E\check y_1-\widetilde F^\top\mathbb E \check \beta_1- A_\Theta^\top \check y_2^\Theta-F^\top\check y_2^\Theta ]dt,\\
 &
\alpha(0)=\xi,\quad \gamma(T)=0,\quad \check y_1(T)=0,\quad \check y_2^\Theta(T)=0.
\end{aligned}\right.\end{equation}
Note that by the independence between $\Theta$ and $W$, \eqref{product space} can be viewed as defined on the product space $\Omega_1\times\Omega_2\rightarrow\mathcal{S}\times\mathbb R^n$. This is a general system which includes many framework in current literature as special cases. For more information, please refer to Section \ref{comparison}.
\end{remark}
The well-posedness of \eqref{CC} will be studied in Section \ref{well-posedness of CC}.

\subsection{Literature comparison}\label{comparison}We now present comparisons to some relevant literature.

\subsubsection{Homogeneous case without diversity}
For the homogeneous case with $\mathcal{S}=\{s_1\}$ being singleton set, we have $A_{\Theta_i}=A_{s_1}:=A$ and $D_{\Theta_i}=D_{s_1}:=D$ for $i=1,\cdots,N$. In this case, we do not need to introduce $x_\theta^{**}$ as in \eqref{limit process of variation} when applying person-by-person optimality. We only need to introduce $x^{**}$ to replace $\delta x_{-i}$. In fact, in current case, $x^{**}$ satisfies the following dynamics:
\begin{equation*}\begin{aligned}
&dx^{**}=[  (A+F)x^{**}+F\delta x_i]dt,\quad x^{**}(0)=0.
\end{aligned}\end{equation*}
Moreover, CC in homogeneous case becomes
\begin{equation}\label{CC-2}\left\{\begin{aligned}
&d\alpha=[A\alpha+B\mathbf P_{\Gamma}[R^{-1}\mathcal E_t[-B^\top \gamma-D^\top \vartheta]]+F\mathbb E\alpha]dt\\
&\qquad+[C\alpha+D\mathbf P_{\Gamma}[R^{-1}\mathcal E_t[-B^\top \gamma-D^\top \vartheta]]+\widetilde F\mathbb E\alpha]dW,\\
&d\gamma=[-Q\alpha+(QH+ H Q-HQH)\mathbb E\alpha-A^\top \gamma+F^\top \check y_2+F^\top \mathbb E\check{y}_1-C^\top \vartheta\\
&\qquad+\widetilde F^\top\mathbb E\check{\beta}_1]dt+\vartheta dW(t),\\
&d\check y_1=[Q\alpha-A^\top \check y_1-C^\top\check \beta_1] dt+\check \beta_1dW,\\
&d\check y_2=[-(QH+ H Q-HQH)\mathbb E\alpha-F^\top \mathbb E\check y_1-\widetilde F^\top\mathbb E \check \beta_1- A^\top \check y_2-F^\top\check y_2]dt,\\
 &
\alpha(0)=\xi,\quad \gamma(T)=0,\quad \check y_1(T)=0,\quad \check y_2(T)=0.
\end{aligned}\right.\end{equation}This is the special case of \eqref{CC} with $\Phi(\theta)$ being a Dirac distribution. Subsequently, our framework covers the homogeneous case as its special case. Furthermore, in case $C=D=F=\widetilde{F}=0$, $\Gamma=\mathbb{R}^{m}$ and $\mathbb{G}^i=\mathbb{F}^i$,
by applying expectation, $\bar\alpha=\mathbb{E}\alpha$ and $\bar\gamma=\mathbb{E}\gamma$ satisfy the dynamics:
\begin{equation}\label{CC-4}\left\{\begin{aligned}
&d\bar\alpha=[A\bar\alpha-BR^{-1}B^\top \bar\gamma]dt,\\
&d\bar\gamma=[(-Q+QH+ H Q-HQH)\bar\alpha-A^\top \bar\gamma]dt,\\
 &\bar\alpha(0)=\xi,\quad \bar\gamma(T)=0.
\end{aligned}\right.\end{equation}
This is just the special case discussed in pp. 1742 of \cite{HCM2012} (see (42),(43) therein). The only difference is that \eqref{CC-4} is of open-loop ($\bar\gamma$ is the adjoint process) while (42) and (43) in \cite{HCM2012} are of closed-loop ($\Pi\bar x+s$ is of feedback form).

\subsubsection{Heterogeneous case with finite diversities}
Specifically, we assume that $\Theta_i$ is deterministic (post-sampling) and assumes values in a finite discrete set $\mathcal{S}=\{1,2,\cdots,K\}$.  For $1 \leq k \leq K$, introduce$$\mathcal{I}_k=\{i|\Theta_i=k, 1 \leq i \leq  N\}, \quad \quad N_k=|\mathcal{I}_k|,$$where $N_k$ is the cardinality of index set $\mathcal{I}_k$ (i.e., cardinality of set of $k$-type agents). For $1\leq k\leq K$, let $\pi_k^{(N)}=\frac{N_k}{N}$, then $\pi^{(N)}=(\pi_1^{(N)}, \cdots, \pi_K^{(N)})$ is a probability vector representing the empirical distribution of $\Theta_1, \cdots, \Theta_N.$ Suppose there exists a probability mass vector $\pi=(\pi_1, \cdots, \pi_K)$ such that $\displaystyle{\lim_{N\rightarrow+\infty}}\pi^{(N)}=\pi$ and $\displaystyle{\min_{1 \leq k \leq K}}\pi_{k}>0.$
Under these assumptions, the person-by-person procedure still proceeds as in Section \ref{p-b-p optimality}. Let $\delta x_{(k)}=\sum_{j\in\mathcal I_k,j\neq i}\delta x_j$. By exchangeability of agents within same type, we need only consider a representative agent in each type when using a limit to approximate $\delta x_{(k)}$. Therefore, for $k=1,\cdots,K$, we should introduce the term
 $x^{**}_k$ to replace $\delta x_{(k)}$, where $x^{**}_k$ satisfies the following dynamics:
\begin{equation*}\begin{aligned}
&dx_k^{**}=\Big[A_kx_k^{**}+F\pi_k\delta x_i+F\pi_k\sum_{l=1}^Kx_l^{**}\Big]dt,\qquad x_k^{**}(0)=0 ,\quad k=1,\cdots,K.
\end{aligned}\end{equation*}
Furthermore, if $\mathbb{G}^i=\mathbb{F}^i$, CC of heterogeneous case with finite diversities becomes:
\begin{equation}\label{CC-hete-finite}\left\{\begin{aligned}
&d\alpha_k=[A_k\alpha_k+B\mathbf P_{\Gamma}[R_{k}^{-1}(B^\top \gamma_k+D_{k}^\top \vartheta_k)]+F\sum_{l=1}^K\pi_l\mathbb E\alpha_l]dt\\
&\qquad\quad+[C\alpha_k+D_k\mathbf P_{\Gamma}[R_{k}^{-1}(B^\top \gamma_k+D_{k}^\top \vartheta_k)]+\widetilde F\sum_{l=1}^K\pi_l\mathbb E\alpha_l]dW_k(t),\\
&d\gamma_k=[-Q\alpha_k+(QH+ H Q-HQH)\sum_{l=1}^K\pi_l\mathbb E\alpha_l-A_k^\top \gamma_k+F^\top\sum_{l=1}^K\pi_l \check y_2^l+F^\top\sum_{l=1}^K\pi_l \mathbb E\check{y}_1^l\\
&\qquad\quad-C^\top \vartheta_k+\widetilde F^\top\sum_{l=1}^K\pi_l\mathbb E\check{\beta}_1^{l}]dt+\vartheta_kdW_k(t),\\
&d\check y_1^k=[Q\alpha_k-A_k^\top \check y_1^k-C^\top\check \beta_1^{k}] dt+\check \beta_1^{k}dW_k,\\
&d\check y_2^k=[-(QH+ H Q-HQH)\sum_{l=1}^K\pi_l\mathbb E\alpha_l-\sum_{l=1}^K\pi_l(F^\top \mathbb E\check y_1^l+\widetilde F^\top\mathbb E \check \beta_1^{l})-A_k^\top \check y_2^k-F^\top\sum_{l=1}^K\pi_l \check y_2^l]dt,\\
 &
\alpha_k(0)=\xi,\quad \gamma_k(T)=0,\quad \check y_1^k(T)=0,\quad \check y_2^k(T)=0,\qquad k=1,\cdots,K.
\end{aligned}\right.\end{equation}
\eqref{CC-hete-finite} is similar to the consistency condition in \cite{HHN2018} (see (2.15) therein). \cite{HHN2018} deals with mean-field game with heterogeneous case with finite diversities, hence the consistency condition only involves the Hamiltonian system of the auxiliary control problem. While for LQG-MT, besides the Hamilton system \eqref{limiting Hamilton}, CC also includes \eqref{y1,y2theta} by the person-by-person and weak-construction duality procedure.

\subsubsection{Heterogeneous case with continuum diversities but without state-coupling}
When $F=\widetilde{F}=0$, i.e., there is no weakly-coupling in state, by \eqref{minor-j} we have $\delta x_j\equiv 0$ for $j\neq i$, thus $x_j^*, x_\theta^{**}$ both vanish in \eqref{limit process of variation}. The resulting cost variation \eqref{variation-conti-1} takes a rather simple form than \eqref{cost variation-social cost-2},
\begin{equation}\begin{aligned}\label{cost variation-social cost-2-degenerate}
\delta \mathcal J_{soc}^{(N)}
 =\mathbb E\int_0^T\Big[\langle Q\bar x_i,\delta x_i\rangle-\langle (QH+ H Q-HQH)\hat x,\delta x_i\rangle
+\langle R\bar u_i,\delta u_i\rangle \Big]dt+\varepsilon_1,
\end{aligned}\end{equation}
where
\begin{equation*}\begin{aligned}
&\varepsilon_1=E\int_0^T\langle (QH+HQ- H Q H)(\hat x-\bar x^{(N)}),N\delta x^{(N)}\rangle dt.
\end{aligned}\end{equation*}
From \eqref{cost variation-social cost-2-degenerate} we can obtain the auxiliary control problem directly, i.e., it becomes unnecessary to introduce the limit terms \eqref{limit process of variation} and adjoint processes \eqref{adjoint processes}.
This is similar to the case in Section IV.A of \cite{HCM2012}. Note that in \cite{HCM2012}, there is no point-wise constraint or partial information constraint on the admissible control, hence the main focus is to find the optimal closed-loop control for the auxiliary control problem (see (32) therein). While with the above two constraints, we will obtain the  optimal open-loop control for the auxiliary control problem (see \eqref{optimal control}). In this case, \eqref{CC} reduces to
\begin{equation}\label{CC-continuum diversities}\left\{\begin{aligned}
&d\alpha=[A_\Theta\alpha+B\mathbf P_{\Gamma}[R^{-1}\mathcal E_t[-B^\top \gamma-D_{\Theta}^\top \vartheta]]]dt\\
&\qquad+[C\alpha+D_\Theta\mathbf P_{\Gamma}[R^{-1}\mathcal E_t[-B^\top \gamma-D_{\Theta}^\top \vartheta]]]dW,\\
&d\gamma=[-Q\alpha+(QH+ H Q-HQH)\mathbb E\alpha-A_\Theta^\top \gamma-C^\top \vartheta]dt+\vartheta dW(t),\\
&\alpha(0)=\xi,\quad \gamma(T)=0,
\end{aligned}\right.\end{equation}for which the well-posedness is much more easily to establish. Furthermore, if $C=D_\Theta=0$, $\Gamma=\mathbb{R}^{m}$ and $\mathbb{G}^i=\mathbb{F}^i$, by taking expectation to \eqref{CC-continuum diversities}, the derived FBSDEs reduces to the case on pp. 1740 of \cite{HCM2012}.

By contrast, when $F, \widetilde{F} \neq 0,$ variation functional $\delta \mathcal J_{soc}^{(N)}(\delta u_i)$ of \eqref{variation-conti-1} becomes rather involved depending on $x_j^*$ and $x^{**}$ both. Those two terms are some \emph{intermediate variation limits} related to basic variation term $\delta x_i$ in an indirect manner. Thus, the current representation \eqref{variation-conti-1} cannot lead a direct construction to an auxiliary control. Some duality method are required to remove dependence on these intermediate
variations.

\subsubsection{Other cases}

For homogeneous case, \cite{HHL2017} studies linear-quadratic mean-field games with control process constrained in a closed convex subset of full space $\mathbb R^m$; \cite{HWW2016} studies backward mean-filed linear-quadratic games with partial information. When there involves only constraints on the control or only  partial information, our framework is the extension of \cite{HHL2017} and \cite{HWW2016} for social optima case.

\subsection{Homogeneity and heterogeneity: a unified quasi-exchangeable approach}\label{unified quasi-exchangeable}

Recall that the mean-field theory has been extensively applied to study the large-scale weakly-coupled system along both (competitive) game and (cooperative) team directions, see e.g., \cite{BSYS2016,CD2013,HHN2018,Huang2010,HCM2007,LL2007} for recent relevant studies for game; and \cite{QHX,WZZ} for team. Essentially, such mean-field analysis is build on some \emph{exchangeability} among all individual weakly-coupled agents. It can be proved that any exchangeable sequences should be conditional independent with respect to some tail-sigma algebra. Thus, applying de Finetti theorem, the original complex weakly-coupling structure can be replaced by a deterministic- or common-noise-driven process as agent number $N$ tends to infinity. By this, all agents thus become asymptotically decoupled along with chaos propagation. Subsequently, original game or team can be reduced to low dimensional single agent optimization problem with some off-line quantities via consistency condition that matches the above exchangeable reasoning.
In this sense, mean-field analysis connects closely to exchangeable game/team in random context, and further to \emph{symmetric} game/team (\cite{CRVW}) in deterministic context. We remark that all agents in symmetric game are endowed with same underlying parameters and so become identical in analysis. So, the primal high-dimensional computation can be greatly reduced using ``mirror" argument among all symmetric agents.

Regarding large-scale system, there exist three progressive levels of diversity relevant to aforementioned exchangeability: homogeneous, heterogenous with finite/discrete diversity, and heterogenous with continuum diversity. Among them, homogenous case is most special but tractable one because all agents are statistical identical and the designed optimal team strategies should also be exchangeable. Consequently, the resulting optimized states are thus exchangeable. We refer \cite{QHX} for recent studies in such case for team, and \cite{HHL2017} for game.

Compared with homogenous case, heterogenous case with finite/discrete diversity is more realistic. Virtually, most systems in reality demonstrate some diversities in their random behaviors. In this case, all agents, from whole system scale, are no longer identical because they are endowed with diversified  parameters. However, all agents inside a sub-system with same diversity index, are still exchangeable in small scale. Thus, we can treat the large-scale system as some mixed combination of \emph{finite} exchangeable sub-systems. The previous mean-field analysis to homogenous can be suitably modified to tackle such case, with some technical but straightforward arguments. We refer \cite{AM2016} for recent studies in such case for team in discrete time setup, and \cite{Huang2010,HHN2018} for game, where a similar \emph{partial exchangeability} is introduced. 

The heterogenous case with continuum diversity, as discussed in \cite{HCM2012,NH}, should be most realistic setup for practical large-scale system. Indeed, it is less possible that the diversity of real system, can only be limited on a finite or discrete support set. Instead, considerable statistical diversity demonstrate its support on a continuum set such as compact closed interval. On the other hand, such heterogenous case should be most difficult to be handled. One reason for the continuum heterogeneity to be analytically intractable, is that the sub-class exchangeability featured in finite heterogeneity case, will shrink to zero mass along with the continuum diversity support. For this reason, the relevant results for continuum heterogeneity seems few compared with homogeneous- or finite-heterogenous-case.

We remark \cite{NH} discussed mean-field analysis with continuum diversity in game setup, and \cite{HCM2012} in team setup, using a direct state-aggregating method. However, the setting in both works are relatively simple, in particular, its weakly-coupled dynamics is only drift-controlled. This corresponds to our model with $C=D=\widetilde F=0,$ and cannot cover various applications such as portfolio selection with relative performance. Our setup is more general (diffusion-controlled and -coupled) and above aggregation method no longer works. Meanwhile, due to continuum diversity, we cannot apply the weak embedding representation method used in \cite{HHL2017,HHN2018,QHX} when tackling diffusion controlled system but of finite diversities only. Indeed, the analysis of \cite{HHN2018} replies on a construction of $K$ independent copies of optimized states with individual BMs, where $K$ is the finite cardinality of diversity. This becomes impossible for current case in presence of continuum diversities.

As resolution, this paper proposes some unified approach to homogenous-, and heterogenous-case using a quasi-exchangeable method. The main idea is as follows: first, note that the dynamics
\begin{equation*}\left\{\begin{aligned}
dx_i=&[A_{\Theta_i}x_i+Bu_i+Fx^{(N)}]dt+[Cx_i+D_{\Theta_i}u_i+\widetilde  Fx^{(N)}]dW_i,\\
x_i(0)=&\xi\in\mathbb R^n,\qquad 1\leq i\leq N,
\end{aligned}\right.\end{equation*}can be reformulated as follows:
\begin{equation*}\left\{\begin{aligned}
dx_i&=[A(z_i(t), t)x_i+Bu_i+Fx^{(N)}]dt+[Cx_i+D(z_i(t), t)u_i+\widetilde  Fx^{(N)}]dW_i,\\
dz_{i}(t)&\equiv0,\\
x_i(0)&=\xi\in\mathbb R^n; \quad
z_{i}(0)=\Theta, \qquad 1\leq i\leq N,
\end{aligned}\right.\end{equation*}
that can be further written with some augmented state as
$$d \mathbf{x}_{i}=[\mathbf{A}(\mathbf{x}_i)\mathbf{x}_i+Bu_i+\mathbf{F}\mathbf{x}^{(N)}]dt+[\mathbf{C}\mathbf{x}_i+
\mathbf{D}(\mathbf{x}_i)u_i+\widetilde{\mathbf F}\mathbf{x}^{(N)}]dW_i,\ \ \mathbf{x}_i(0)=(\xi_{i}^{\top},\ \ \Theta^{\top})^{\top}.$$
In other words, initial weakly-coupled system with continuum diversity can be viewed as some quasi-linear SDE with augmented state $\mathbf{x}_{i}=(x_{i}^{\top}, z_{i}^{\top})^{\top}$ and random initial conditions $\mathbf{x}_i(0)$ (noting $\Theta \in \mathcal{F}_{0}$, although $\xi$ might be deterministic).

To proceed, we introduce the following three systems. To ease notation, we are inclined to adopt symbols like $A(\mathbf{x})$ instead $\mathbf{A}(\mathbf{x})$ when no confusion occurs. The first system is a McKean-Vlasov SDE with random initials:
\begin{equation*}
\mathcal{P}_{1}: \quad \quad d \mathbf{x}=[{A}(\mathbf{x})\mathbf{x}+Bu+{F}\mathbb{E}\mathbf{x}]dt+[C\mathbf x+D(\mathbf x)u+\widetilde { F}\mathbb{E}\mathbf{x}]dW,\ \ \mathbf{x}(0)=(\xi^{\top},\Theta^{\top})^{\top}.
\end{equation*}
For sake of illustration, we set $\Theta \in \Lambda=\{\theta_1, \theta_2, \cdots, \theta_{K}\}$ with the mass $m_1,\cdots,m_K$ to admit finite $K$ diversity classes. Later, we will illustrate its possible extension to infinite continuum diversities. The second system is a stochastic mixture: $\widetilde{\mathbf{x}}=\sum_{j=1}^{K}m_j\widetilde{\mathbf{x}}_{j}$ but driven by identical noise $W$:
\begin{equation*}
\mathcal{P}_{2}: \quad \quad
d\widetilde{\mathbf{x}}_{j} =[A_{\theta_{j}} \widetilde{\mathbf{x}}_{j}+B u +F \mathbb{E}\widetilde{\mathbf{x}} ]dt+[C \widetilde{\mathbf{x}}_{j} +D_{\theta_{j}} u +\widetilde F \mathbb{E}\widetilde{\mathbf{x}} ]dW, \quad
\widetilde{\mathbf{x}}_{j}(0)=(\xi^{\top}, \theta_{j}^{\top})^{\top}.
\end{equation*}By contrast, the third system is also a stochastic mixture $\widehat{\mathbf{x}}=\sum_{j=1}^{K}m_j\widehat{\mathbf{x}}_{j}$ but driven by $K$ i.i.d noises $\{W_{j}\}_{j=1}^{K}$:

\begin{equation*}
\mathcal{P}_{3}: \quad \quad \begin{aligned}
d\widehat{\mathbf{x}}_{j} =[A_{\theta_{j}} \widehat{\mathbf{x}}_{j}+B u +F \mathbb{E}\widehat{\mathbf{x}} ]dt+[C \widehat{\mathbf{x}}_{j} +D_{\theta_{j}} u +\widehat F \mathbb{E}\widehat{\mathbf{x}} ]dW_{j},\quad
\widehat{\mathbf{x}}_{j}(0)=(\xi^{\top}, \theta_{j}^{\top})^{\top}.
\end{aligned}\end{equation*}
It is obvious that above three systems: $\mathbf{x}, \widetilde{\mathbf{x}}$ and $\widehat{\mathbf{x}}$ are not of the same distributions. Actually, $\mathbf{x}$ has different initial distribution at $t=0$ with $\widetilde{\mathbf{x}}, \widehat{\mathbf{x}}$, whereas $\widehat{\mathbf{x}}$ is driven by different noise with $\mathbf{x}, \widetilde{\mathbf{x}}$. Thus, all three systems are not equivalent in weak sense. However, they have same expectation dynamics, as verified using \emph{tower property} of conditional expectation, $\forall t \in [0, T]: \mathbb{E}\mathbf{x}(t)=\mathbb{E}(\mathbb{E}(\mathbf{x}(t)|\Theta))=\sum_{j=1}^{K}m_j\mathbb{E}\widetilde{\mathbf{x}}_{j}(t)
=\mathbb{E}\widetilde{\mathbf{x}}(t)=\sum_{j=1}^{K}m_j\mathbb{E}\widehat{\mathbf{x}}_{j}(t)=\mathbb{E}\widehat{\mathbf{x}}(t)$. Besides, all three systems have different second-moment function, and other finite-dimensional distributions. For example,
\begin{equation*}\begin{aligned}
&\mathbb{E}|\mathbf{x}(t)|^2=\mathbb{E}(\mathbb{E}(|\mathbf{x}(t)|^2|\Theta))=\sum_{j=1}^{K}m_j\mathbb{E}|\widetilde{\mathbf{x}}_{j}(t)|^2,\\
&\mathbb{E}|\widetilde{\mathbf x}(t)|^2=\sum_{j=1}^{K}m_j^2\mathbb{E}|\widetilde{\mathbf{x}}_{j}(t)|^2+\sum_{1\leq j<l\leq K}m_jm_l\mathbb E[\widetilde{\mathbf{x}}_{j}(t)\widetilde{\mathbf{x}}_{l}(t)],\\
&\mathbb{E}|\widehat{\mathbf x}(t)|^2=\sum_{j=1}^{K}m_j^2\mathbb{E}|\widehat{\mathbf{x}}_{j}(t)|^2+\sum_{1\leq j<l\leq K}m_jm_l\mathbb E[\widehat{\mathbf{x}}_{j}(t)\widehat{\mathbf{x}}_{l}(t)]=\sum_{j=1}^{K}m_j^2\mathbb{E}|\widetilde{\mathbf{x}}_{j}(t)|^2.
\end{aligned}\end{equation*}
Noticing above expectation equivalence is special degenerated version of Jensen inequality, thanks to the underlying LQG context. Such property cannot be extended to nonlinear moments hence $\mathbf{x}, \widetilde{\mathbf{x}}$ and $\widehat{\mathbf{x}}$ are with same expectation but different distributions.

Corresponding to $\mathcal{P}_1, \mathcal{P}_2, \mathcal{P}_3,$ we may construct three weakly-coupled systems $\mathcal{M}_1, \mathcal{M}_2, \mathcal{M}_3$:
\begin{equation*}
\mathcal{M}_{1}:  d \mathbf{x}_{i}=[{A}(\mathbf{x}_i)\mathbf{x}_i+Bu_i+{F}\mathbf{x}^{(N)}]dt+[{C}\mathbf{x}_i+
{D}(\mathbf{x}_i)u_i+\widetilde{F}\mathbf{x}^{(N)}]dW_{i},\ \ \mathbf{x}_i(0)=(\xi^{\top},\ \Theta)^{\top}.
\end{equation*}
where $\mathbf{x}^{(N)}=\frac{1}{N}\sum_{i=1}^N\mathbf{x}_{i}$. Another is weakly-coupled system $\mathcal{M}_{2}: \{\widetilde{\mathbf{x}}_{i}\}_{i=1}^{N}$ with $\widetilde{\mathbf{x}}_{i}=\sum_{j=1}^{K}m_j\widetilde{\mathbf{x}}_{i,j}$,
\begin{equation*}
\mathcal{M}_{2}:
 d\widetilde{\mathbf{x}}_{i,j} =[A_{\theta_{j}}\widetilde{\mathbf{x}}_{i,j} +B u_i +F \widetilde{\mathbf{x}}^{(N)}]dt+[C \widetilde{\mathbf{x}}_{i,j} +D_{\theta_j} u_i +\widetilde  F \widetilde{\mathbf{x}}^{(N)}]dW_i,\ \ \  \widetilde{\mathbf{x}}_{i,j}(0)=(\xi^{\top}, \theta_{j}^{\top})^{\top},
\end{equation*}
where $\widetilde{\mathbf{x}}^{(N)}=\frac{1}{N}\sum_{i=1}^N\widetilde{\mathbf{x}}_{i}$. For $1 \leq j \leq K,$ we can introduce $\widehat{\mathcal{M}}_{2}^{j}: \{\widetilde{\mathbf{x}}_{i,j}\}_{i=1}^{N}$ that is a homogeneous weakly-coupled system indexed by $\theta_{j}.$ Abusing notation, we may write informally that $\mathcal{M}_{2}=\sum_{j=1}^{K}m_{j}\widehat{\mathcal{M}}_{2}^{j},$ in other words, $\mathcal{M}_{2}$ is a finite mixture of homogeneous systems $\{\widehat{\mathcal{M}}_{2}^{j}\}_{j=1}^{K}.$ Noticing for $\widehat{\mathcal{M}}_{2}^{j},$ the driving BMs become $\{W_{i}\}_{i=1}^{N}$ which are same to that of $\widehat{\mathcal{M}}_{2}^{j'}$ for $j \neq j'.$ Thus, totally there involve $N$ independent BMs for $\mathcal{M}_{2}$. Moreover, if we introduce a sampling sequence from $\{1, \cdots, K\}$ with $\mathcal{I}_{j}=\{ \theta_{i}=j, 1\leq i \leq N\}$ and $\lim_{N \rightarrow +\infty} \frac{\text{Card} \mathcal{I}_j}{N}=m_j, \ \ 1 \leq j \leq K.$ Then, $\mathcal{M}_{2}$ is equivalent in weak sense to stochastic $K-$heterogenous weakly-coupled system introduced in \cite{HHN2018,{Huang2010}}.

The third system is $\mathcal{M}_{3}: \{\widehat{\mathbf{x}}_{i}\}_{i=1}^{N}$ with $\widehat{\mathbf{x}}_{i}=\sum_{j=1}^{K}m_j\widehat{\mathbf{x}}_{i,j}$,
\begin{equation*}
\mathcal{M}_{3}:
 d\widehat{\mathbf{x}}_{i,j} =[A_{\theta_{j}}\widehat{\mathbf{x}}_{i,j} +B u_i +F \widehat{\mathbf{x}}^{(N)}]dt+[C \widehat{\mathbf{x}}_{i,j} +D_{\theta_j} u_i +\widehat F \widehat{\mathbf{x}}^{(N)}]dW_{i,j},\ \ \  \widehat{\mathbf{x}}_{i,j}(0)=(\xi^{\top}, \theta_{j}^{\top})^{\top},
\end{equation*}
where $\widehat{\mathbf{x}}^{(N)}=\frac{1}{N}\sum_{i=1}^N\widehat{\mathbf{x}}_{i}$. For $1 \leq j \leq K,$ we can introduce $\widehat{\mathcal{M}}_{3}^{j}: \{\widehat{\mathbf{x}}_{i,j}\}_{i=1}^{N}$ that is a homogeneous weakly-coupled system indexed by $\theta_{j}.$ Noticing for $\widehat{\mathcal{M}}_{3}^{j},$ the driving BMs become $\{W_{i,j}\}_{i=1}^{N}$. So, totally there arise $N \times K$ independent BMs for $\mathcal{M}_{3},$ or re-scale to $N$ BMs for each sub-system $\widehat{\mathcal{M}}_{3}^{j}, 1 \leq j \leq K.$ This is not problematic when $K$ is finite. Again, $\mathcal{M}_{3}$ is finite mixture of homogeneous system $\{\widehat{\mathcal{M}}_{3}^{j}\}_{j=1}^{K}.$ We remark that $\widehat{\mathcal{M}}_{3}^{j}$ and $\widehat{\mathcal{M}}_{2}^{j}$ are driven by different BMs, but they are equivalent weak-coupled homogenous system in weak sense. This is because they share have same state-average limit by law of large numbers, although they are driven by different BMs systems.

Moreover, we can introduce an augmented state ${\mathbf{y}}_{i}=(\widehat{\mathbf{x}}_{i,1}^{\top}, \cdots, \widehat{\mathbf{x}}_{i,K}^{\top})^{\top}$ and $\widehat{\mathbf x}^{(N)}=\frac{1}{N}\sum_{i=1}^N\widehat{\mathbf x}_i$, it follows that
\begin{equation*}
 d{\mathbf{y}}_{i}=[\widehat A{\mathbf{y}}_{i} +\widehat B \widehat u_i +  \mathbf F \mathbf y^{(N)}]dt+\sum_{j=1}^K[\widehat C_j {\mathbf{y}}_{i} +\widehat D_{j} u_i +\widehat{\mathbf F}_j \mathbf{y}^{(N)}]dW_{i,j},\ \ {\mathbf{y}}_{i}(0)=(\xi^{\top}, \theta_{1}^{\top} \cdots, \xi^\top, \theta_{K}^{\top})^{\top},
\end{equation*}
where
\begin{equation*}
  \begin{aligned}
     & \widehat{A}=
    \left(\begin{smallmatrix}
      A_{\theta_1}   & \cdots & 0 \\
      \vdots  & \ddots &\vdots\\
      0&  \cdots & A_{\theta_K}  \\
    \end{smallmatrix}\right)_{(nK\times nK)},
    \widehat{B}=
    \left(\begin{smallmatrix}
      B &  \cdots & 0 \\
     \vdots  & \ddots &\vdots\\
      0 &  \cdots & B \\
    \end{smallmatrix}\right)_{(nK\times mK)},
\widehat u_i=
    \left(\begin{smallmatrix}
     u_i \\
      \vdots\\
      u_i
    \end{smallmatrix}\right)_{(mK\times 1)},\\
&   \mathbf{F}=
   \left(\begin{smallmatrix}
     Fm_1&\cdots&Fm_K \\
     \vdots &\vdots&\vdots\\
      Fm_1&\cdots&Fm_K \\
    \end{smallmatrix}\right)_{(nK\times nK)},\widehat{C}_j=\begin{smallmatrix}
      1 \\
      \vdots \\
      j \\
      \vdots \\
      K
    \end{smallmatrix}
    \left(\begin{smallmatrix}
      0 & \cdots & 0 & \cdots & 0\\
      \vdots & \vdots &  \vdots & \vdots & \vdots\\
      0 & \cdots &  C
      &\cdots & 0\\
      \vdots & \vdots & \vdots & \vdots & \vdots\\
      0 & \cdots &  0 & \cdots & 0\\
    \end{smallmatrix}\right)_{(nK\times nK)},\\
     &
     \widehat{D}_j=\begin{smallmatrix}
      1 \\
      \vdots \\
      j \\
      \vdots \\
      K
    \end{smallmatrix}
    \left(\begin{smallmatrix}
      0 & \cdots &  0 & \cdots & 0\\
      \vdots & \vdots &  \vdots & \vdots & \vdots\\
      0 & \cdots &  D_{\theta_j}
      &\cdots & 0\\
      \vdots & \vdots  & \vdots & \vdots & \vdots\\
      0 & \cdots &  0 & \cdots & 0\\
    \end{smallmatrix}\right)_{(nK\times mK)},
    \widehat{\mathbf F}_j=\begin{smallmatrix}
      1 \\
      \vdots \\
      j \\
      \vdots \\
      K
    \end{smallmatrix}
    \left(\begin{smallmatrix}
      0&\cdots&0 \\
      \vdots&\vdots&\vdots\\
     \widehat Fm_1&\cdots&\widehat Fm_K\\
      \vdots &\vdots&\vdots\\
      0&\cdots&0 \\
    \end{smallmatrix}\right)_{(nK\times nK)}.
  \end{aligned}
\end{equation*}
It follows that $\mathcal{M}_{3}: \{\widehat{\mathbf{x}}_{i}\}_{i=1}^{N}$ satisfying $\widehat{\mathbf{x}}_{i}=\textbf{m} \cdot {\mathbf{y}}_{i}$ with $\textbf{m}=(m_1, \cdots, m_{K}).$ Noticing that $\{{\mathbf{y}}_{i}\}_{i=1}^{N}$ is homogenous for $1 \leq i \leq N$ and so is the case for $\{\widehat{\mathbf{x}}_{i}\}_{i=1}^{N}$, thus $\mathcal{M}_{3}$ can be viewed as a homogenous system but with augmented state ${\mathbf{y}}_{i}.$
Hence, $\mathcal{M}_{3}$ can be formulated either a finite mixture of $K$-homogeneous system $\{\widehat{\mathcal{M}}_{3}^{j}\}_{j=1}^{K},$ or a single homogenous system but with augmented (mixed) state ${\mathbf{y}}_{i}.$ Note that the later formulation on augmented ${\mathbf{y}}_{i}$ actually connects to the so-called direct method (\cite{WZZ}). In fact, by formulation on ${\mathbf{y}}_{i},$ we can apply direct method proposed in (\cite{WZZ}) for homogenous system only but now on more intractable (finite) heterogenous system. As the trade-off, the associated Riccati or Hamiltonian system become augmented accordingly with coupled block structure due to $K$ diversity.

The above three weakly-coupled systems $\mathcal{M}_1, \mathcal{M}_2, \mathcal{M}_3$ have different distributions but always with the same asymptotic empirical state-average as $N\rightarrow+\infty$. In fact, they are generated from same underlying weakly-coupled stochastic systems but differs in filtration on given timing point. To be precise, all agents in $\mathcal{M}_1$ are exchangeable in quasi-sense (at filtration point $\mathcal{F}_{0}$) before the diversity sampling. In this case, $\mathbf{x}_{i}(t)=\mathbb{E}(\mathbf{x}_{i}(t)|\mathcal{F}_{t})=\mathbb{E}(\mathbf{x}_{i}(t)|\Theta, W_{i}(s), 0 \leq s \leq t, 1 \leq i \leq N).$ On the other hand, $\mathcal{M}_2$ is the same system but conditional on the pre-sampled diversity index $\Theta_i.$ In this case, $\widetilde{\mathbf{x}}_{i}(t)=\mathbb{E}(\mathbb{E}(\mathbf{x}_{i}(t)|\Theta)|W_{i}(s), 0 \leq s \leq t, 1 \leq i \leq N).$ Last, $\mathcal{M}_3$ is same weak-coupled system but after the sampling of diversity $\Theta$ and $\widehat{\mathcal{M}}_{3}^{j}$ is just the re-labeled system with realization $\Theta=\theta_j.$ In this sense, all three systems $\mathcal{M}_{1}, \mathcal{M}_{2}, \mathcal{M}_{3}$ characterize the same underlying dynamics but from different temporal section. Thus, they are equivalent for mean-field analysis because they share the same state-average limit (in formulation, and Step 1 for decomposition) and expectation operator (in Step 3 for CC).

To recap, we present the following diagram where ``$\Longleftrightarrow$" represents the equivalent expectation operator in first line, while asymptotic state-average operator in second line:
\begin{equation}\left\{\label{equivalent ralation}\begin{aligned}
&\text{single-agent:} \ \mathcal{P}_1 \Longleftrightarrow \mathcal{P}_2 \Longleftrightarrow \mathcal{P}_3,\\
&\text{weakly-coupled agents:}\  {\mathcal{M}_1} \Longleftrightarrow \mathcal{M}_2 \Longleftrightarrow \mathcal{M}_3 \Longleftrightarrow \mathcal{M}\ \text{(stochastic $K$-heterogenous system),} \\& \mathcal{M}_1: \text{homogenous but with random diversity index $\Theta$, augmented randomness, pre-sampling}\\& \mathcal{M}_2: \text{mixture of $K$ homogenous system, pre-sampling}\\&\mathcal{M}_3: \text{homogenous system with (augmented) mixture of states, post-sampling}
\\&\mathcal{M}: \text{$K$ heterogenous system defined by relative frequency of diversity sequence, post-sampling.}
\end{aligned}\right.\end{equation}
Above arguments in \eqref{equivalent ralation} are on the basis that $\Theta$ is finite-valued only. Now we present its generalization to case when $\Theta$ has continuum diversity support. In this case, we have
\begin{equation*}\left\{\begin{aligned}
& \mathcal{M}_1^c: \text{homogenous but with random diversity index $\Theta$, augmented randomness, pre-sampling}\\
& \mathcal{M}_2^c: \text{mixture of continuum homogenous system, pre-sampling}\\
&\mathcal{M}_3^c: \text{homogenous system with (augmented) mixture of states, post-sampling}
\\&\mathcal{M}^c: \text{continuum heterogenous system defined by empirical distribution of diversity}\\
 &\text{\qquad sequence, post-sampling.}
\end{aligned}\right.\end{equation*}
$\mathcal{M}_1^c$ is still well-defined and we have already proceeded the analysis as in Section \ref{auxiliary problem}. On the other hand, $\mathcal{M}_{3}^c$ is no longer well defined since now we have to introduce continuum-valued BMs for $\widehat{\mathcal{M}}_{3}^{\theta,c}$ to model the diversity. By contrast, $\mathcal{M}_2^c$ is still well defined since we need still only formulate countable BMs for each $\widehat{\mathcal{M}}_{2}^{\theta,c}, \theta \in \mathcal{S}$, and in total, only countable BMs are still invoked. In this case, we may further set $\widetilde{\mathbf{x}}_{i}=\int_{\mathcal{S}}\widetilde{\mathbf{x}}_{i, \theta}d\Phi(\theta)$ and proceed the classical mean-field analysis as in \cite{HCM2012}. However, classical mean-field analysis only works on $\mathcal{M}_2^c$ with $C=D=F=\widetilde F=0$. In general case with $F, \widetilde F \neq 0,$ such classical analysis fails because its CC system should invoke an embedding representation (see e.g., \cite{HWY2019}), and a continuum-valued BMs system will be required to replicate the distribution for a generic agent who is still continuum-heterogenous (diversified). Moreover, in \cite{HCM2007}, the continuum heterogeneity is defined through some limiting empirical distribution by Glivenko-Cantelli Lemma. Note that the continuum set therein is required to be compact when using Glivenko-Cantelli arguments, while in our framework of $\mathcal M_1^c$, such compactness is not required. Consequently, this paper can deal with general continuum diversity based on formulation $\mathcal{M}_1^c$, as summarized as follows.

First, we can verify that $\mathcal{M}_1^c, \mathcal{M}_2^c$ as well as $\mathcal{M}^c$ (note that $\mathcal{M}_3^c$ becomes infeasible to be defined) are still of the same asymptotic state-average limit. In this sense, the generic agents in $\mathcal{M}^c$ are {quasi-exchangeable} because although they are not exchangeable after diversity sapling, but $\mathcal{M}^c$ shares the same expectation and asymptotic state-average limit with $\mathcal{M}_1^c, \mathcal{M}_2^c$, and all agents of $\mathcal{M}_1^c$ are exchangeable before the sampling. Second, given such quasi-exchangeable property, the original $\mathcal{M}^c$ or $\mathcal{M}_{2}^c$ system with continuum heterogeneity can be converted to $\mathcal{M}_1^c$ that is a homogenous one but with augmented randomness ($\{\Theta_i, W_{i}\}_{i=1}^{N})$ as trade-off. Third, as discussed in Section \ref{auxiliary problem}, some new type variation-decomposition and auxiliary control problem can thus be constructed, and CC condition can be represented via some weak-construction on continuum diversity support as in Theorem \ref{CC of LQGMT}.

\section{Wellposedness of consistency condition}\label{well-posedness of CC}

This section continues to complete (Step 3) by establishing some well-posedness to consistency condition derived in Section \ref{Consistency condition}. Note that \eqref{CC} is fully-couple FBSDEs involved with double projections whose well-posedness cannot be guaranteed by current literature. Moreover, as explained in Section \ref{unified quasi-exchangeable},  \eqref{CC} is obtained by converting system with continuum heterogeneity to a homogenous one but with augmented randomness ($\{\Theta_i, W_{i}\}_{i=1}^{N})$ as trade-off. Based on this, we will apply the discounting method to study \eqref{CC} which would provide some mild conditions to ensure the existence and uniqueness of fully-coupled FBSDEs as \eqref{CC}.
Define $X=\alpha$, $Y=(\gamma^\top,\check{y}_1^\top,(\check y_2^\theta)^\top)^\top$ and $Z=(\vartheta^\top,\check\beta_1^\top,0)^\top$. For simplicity, let $\mathcal E_t[Y]=\mathbb E[Y|\mathcal G_t]$ and $\mathcal E_t[Z]=\mathbb E[Z|\mathcal G_t]$, $\widetilde{\mathbb E}[Y]=((\int_\mathcal{S}\gamma d\Phi(\theta))^\top,(\int_\mathcal{S}\check y_1 d\Phi(\theta))^\top,(\int_\mathcal{S}\check y_2^\theta d\Phi(\theta))^\top)^\top$,
    then \eqref{CC} takes the following form:
\begin{equation}\label{CC-equivalent form}\left\{\begin{aligned}
&dX=[A_\Theta X+F\mathbb E[X]+\mathbb B_1(Y,Z)]dt+[CX+\widetilde F\mathbb E[X]+\mathbb D_\Theta(Y,Z)]dW,\\
&dY=[\mathbb A_2X+\bar{\mathbb A}_2\mathbb E[X]+\mathbb B_2Y+\bar{\mathbb B}_2\mathbb E[Y]+\widetilde{\mathbb B}_2\widetilde{\mathbb E}[Y]+\mathbb C_2Z+\bar{\mathbb C}_2\mathbb E[Z]]dt+ZdW,\\
&X(0)=\xi,\qquad Y(T)=(0,\cdots,0)^\top,
\end{aligned}\right.\end{equation}
where
\begin{equation*}\left\{\begin{aligned}
&\mathbb B_1(Y,Z)=B\mathbf P_{\Gamma}[R^{-1}\mathcal E_t[-B^\top \gamma-D_{\Theta}^\top \vartheta]]\\
&=
     B\mathbf P_{\Gamma}[R^{-1}((-B^\top,0,\cdots,0)\mathcal E_t[Y]+(-D_\Theta^\top,0,\cdots,0)\mathcal E_t[Z])],\\
&\mathbb D_\Theta(Y,Z)=D_\Theta\mathbf P_{\Gamma}[R^{-1}\mathcal E_t[-B^\top \gamma-D_{\Theta}^\top \vartheta]]\\
&=
     D_\Theta\mathbf P_{\Gamma}[R^{-1}((-B^\top,0,\cdots,0)\mathcal E_t[Y]+(-D_\Theta^\top,0,\cdots,0)\mathcal E_t[Z])],\\
 &
\mathbb A_2=\left(
              \begin{smallmatrix}
              -Q\\
               Q  \\
               0\\
               \end{smallmatrix}
            \right),
            \bar{\mathbb A}_2=\left(
                    \begin{smallmatrix}
                    QH+HQ-HQH\\
                    0 \\
                     -(QH+HQ-HQH)\\
                    \end{smallmatrix}
                  \right),\mathbb B_2=\left(
               \begin{smallmatrix}
-A_\Theta^\top & 0 & 0\\
0 & -A_\Theta^\top & 0 \\
0 & 0 & -A_\theta^\top -F^\top\\
               \end{smallmatrix}
             \right),
\bar{\mathbb B}_2=\left(
               \begin{smallmatrix}
0 & F^\top  & 0 \\
0  & 0 & 0 \\
0 & -F^\top  &0 \\
               \end{smallmatrix}
             \right),\\
                  &\widetilde{\mathbb B}_2=\left(
               \begin{smallmatrix}
0 & 0 & F^\top\\
0 & 0 & 0 \\
0 & 0 &0\\
               \end{smallmatrix}
             \right),
\mathbb C_2=\left(
               \begin{smallmatrix}
-C^\top & 0 & 0\\
0 & -C^\top & 0 \\
0 & 0 & 0\\
               \end{smallmatrix}
             \right),\bar{\mathbb C}_2=\left(
               \begin{smallmatrix}
0 & \widetilde F^\top & 0\\
0 & 0 & 0 \\
0 & -\widetilde F^\top & 0\\
               \end{smallmatrix}
             \right),
\end{aligned}\right.\end{equation*}
and $0$ denotes the zero vector or zero matrix with suitable dimensions. Note that in \eqref{CC-equivalent form}, $\widetilde{\mathbb B}_2\widetilde{\mathbb E}[Y]=\widetilde{\mathbb E}[\widetilde{\mathbb B}_2 Y]=\mathbb E[\widetilde{\mathbb B}_2 Y].$
 To start, we first give some results for general nonlinear mean-field forward-backward system with double projections:
\begin{equation}\label{nonlinear FBSDE}\left\{\begin{aligned}
&dX=b(t,X,\mathbb E[X],Y,\mathcal E_t[Y],Z,\mathcal E_t[Z])dt+\sigma(t,X,\mathbb E[X],Y,\mathcal E_t[Y],Z,\mathcal E_t[Z])dW,\\
&dY(t)=-f(t,X,\mathbb E[X],Y,\mathbb E[Y],\widetilde{\mathbb E}[Y],Z,\mathbb E[Z])dt+ZdW,\\
&X(0)=x,\qquad Y(T)=0,
\end{aligned}\right.\end{equation}
where $\mathbb E[\widetilde{\mathbb E}[Y]]=\mathbb E[Y]$ and the coefficients satisfy the following conditions:
\begin{description}
  \item[(H1)] There exist $\rho_1,\rho_2\in\mathbb R$ and positive constants $k_i,i=1,\cdots,17$ such that
  for all $t\in[0,T]$, $x,x_1,x_2,\bar x,\bar x_1,\bar x_2\in\mathbb R^n$, $y,y_1,y_2,\bar y,\bar y_1,\bar y_2,\hat y,\hat y_1,\hat y_2,\widetilde y,\widetilde y_1,\widetilde y_2\in\mathbb R^m$, $z,z_1,z_2,\bar z,\bar z_1,\bar z_2,\widetilde z,\\\widetilde z_1,\widetilde z_2\in\mathbb R^{m}$, a.s.,
\begin{equation*}\begin{aligned}
&\langle b(t,x_1,\bar x,y,\hat y,z,\hat z)-b(t,x_2,\bar x,y,\hat y,z,
\hat z),x_1-x_2\rangle\leq \rho_1|x_1-x_2|^2,\\
&|b(t,x,\bar x_1,y_1,\hat y_1,z_1,\hat z_1)-b(t,x,\bar x_2,y_2,\hat y_2,z_1,\hat z_2)|\\
\leq& k_1|\bar x_1-\bar x_2|+k_2|y_1-y_2|+k_3|\hat y_1-\hat y_2|+k_4|z_1-z_2|+k_5|\hat z_1-\hat z_2|,\\
&\langle f(t,x,\bar x,y_1,\bar y,\widetilde y,z,\bar z)-f(t,x,\bar x,y_2,\bar y,\widetilde y,z,\bar z),y_1-y_2\rangle\leq\rho_2|y_1-y_2|^2,\\
&|f(t,x_1,\bar x_1,y,\bar y_1,\widetilde y_1,z_1,\bar z_1)-f(t,x_2,\bar x_2,y,\bar y_2,\widetilde y_2,z_2,\bar z_2)|\\
\leq& k_6|x_1-x_2|+k_7|\bar x_1-\bar x_2|+k_8|\bar y_1-\bar y_2|+k_9|\widetilde y_1-\widetilde y_2|+k_{10}|z_1-z_2|+k_{11}|\bar z_1-\bar z_2|,\\
&|\sigma(t,x_1,\bar x_1,y_1,\hat y_1,z_1,\hat z_1)-\sigma(t,x_2,\bar x_2,y_2,\hat y_2,z_2,\hat z_2)|^2\\
\leq& k_{12}^2|x_1-x_2|^2+k_{13}^2|\bar x_1-\bar x_2|^2+k_{14}^2|y_1-y_2|^2+k_{15}^2|\hat y_1-\hat y_2|^2+k_{16}^2|z_1-z_2|^2+k_{17}^2|\hat z_1-\hat z_2|^2.
\end{aligned}\end{equation*}
\item[(H2)]
\begin{equation*}
\mathbb E\int_0^T\Big[|b(t,0,0,0,0,0,0)|^2+|\sigma(t,0,0,0,0,0,0)|^2+|f(t,0,0,0,0,0,0,0)|^2\Big]dt<\infty.
\end{equation*}
\end{description}
Similar to \cite{HHN2018} and \cite{PT1999}, we have the following result of the solvability of \eqref{nonlinear FBSDE}. For the readers' convenience, we give the proof in the appendix.
\begin{theorem}\label{discounting}
Suppose \emph{(H1)} and \emph{(H2)} hold. There exists a constant $\delta_1>0$ depending on $\rho_1,\rho_2,T,k_i,i=1,8,9,10,11,13$ such that if $k_i\in[0,\delta_1)$, $i=2,3,4,5,6,7,12,14,15,16,17$, FBSDEs \eqref{nonlinear FBSDE} admits a unique adapted solution $(X,Y,Z)\in L^2_{\mathcal F}(0,T;\mathbb R^n)\times L^2_{\mathcal F}(0,T;\mathbb R^m)\times L^2_{\mathcal F}(0,T;\mathbb R^{m}).$ Furthermore, if $2\rho_1+2\rho_2<-2k_1-2k_8-2k_9-k_{10}^2-k_{11}^2-k_{12}^2-k_{13}^2$,
there exists a constant $\delta_2>0$ depending on $\rho_1,\rho_2,k_i,i=1,8,9,10,11,13$ such that if $k_i\in[0,\delta_2)$, $i=2,3,4,5,6,7,12,14,15,16,17$, FBSDEs \eqref{nonlinear FBSDE} admits a unique adapted solution $(X,Y,Z)\in L^2_{\mathcal F}(0,T;\mathbb R^n)\times L^2_{\mathcal F}(0,T;\mathbb R^m)\times L^2_{\mathcal F}(0,T;\mathbb R^{m}).$
\end{theorem}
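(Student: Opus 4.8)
\medskip
\noindent\textbf{Proof strategy.}\quad
The plan is to prove Theorem~\ref{discounting} by the \emph{method of continuation}, in the spirit of \cite{PT1999,HHN2018} but adapted to the enriched mean-field structure. Embed \eqref{nonlinear FBSDE} into the family, $\lambda\in[0,1]$,
\begin{equation*}\left\{\begin{aligned}
&dX=\big[(1-\lambda)\rho_1 X+\lambda\, b(t,X,\mathbb E[X],Y,\mathcal E_t[Y],Z,\mathcal E_t[Z])+\varphi_t\big]dt\\
&\qquad+\big[\lambda\,\sigma(t,X,\mathbb E[X],Y,\mathcal E_t[Y],Z,\mathcal E_t[Z])+\psi_t\big]dW,\\
&dY=-\big[-(1-\lambda)\rho_2 Y+\lambda\, f(t,X,\mathbb E[X],Y,\mathbb E[Y],\widetilde{\mathbb E}[Y],Z,\mathbb E[Z])+\eta_t\big]dt+Z\,dW,\\
&X(0)=x,\qquad Y(T)=0,
\end{aligned}\right.\end{equation*}
with $(\varphi,\psi,\eta)$ ranging over the corresponding $L^2_{\mathcal F}(0,T;\cdot)$ spaces; denote this system by $(E_\lambda)$. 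At $\lambda=0$ the two equations decouple into a McKean--Vlasov SDE with one-sided Lipschitz drift and a McKean--Vlasov BSDE with monotone generator, each uniquely solvable for arbitrary data; at $\lambda=1$, $\varphi=\psi=\eta=0$, it is \eqref{nonlinear FBSDE}. Hence everything reduces to the continuation lemma: there is $\delta_1>0$ depending only on $\rho_1,\rho_2,T$ and the ``large'' constants $k_1,k_8,k_9,k_{10},k_{11},k_{13}$ such that, if $(E_{\lambda_0})$ is uniquely solvable for all data, then so is $(E_{\lambda_0+\delta})$ for every $\delta\in[0,\delta_1]$; a finite chain of steps from $\lambda=0$ then reaches $\lambda=1$.

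To prove the lemma I would run a Picard iteration in which, at each round, the $\delta$-part of the coefficients---evaluated at the previous iterate---is moved into the inhomogeneous data and the (by hypothesis solvable) $(E_{\lambda_0})$-system is inverted. Both the contraction of this map and uniqueness follow from one a priori estimate for $(E_\lambda)$, uniform in $\lambda$: for the difference $(\Delta X,\Delta Y,\Delta Z)$ of two solutions with respective data, apply It\^{o}'s formula to $|\Delta X(t)|^2$ and to $|\Delta Y(t)|^2$ and combine them, using (i) the one-sided Lipschitz bounds $\rho_1,\rho_2$ for the self-interactions in $X$ and $Y$; (ii) the non-expansiveness of the projection $\mathbf P_\Gamma$ and the $L^2$-contractivity of $\mathcal E_t[\cdot]=\mathbb E[\cdot\mid\mathcal G_t]$, $\mathbb E[\cdot]$ and the spatial mean $\widetilde{\mathbb E}[\cdot]$ (for the last, $\mathbb E|\widetilde{\mathbb E}[Y]|^2\le\mathbb E|Y|^2$ by Jensen's inequality in $\theta$, while $\mathbb E[\widetilde{\mathbb E}[Y]]=\mathbb E[Y]$ disposes of cross terms between $\mathbb E[Y]$ and $\widetilde{\mathbb E}[Y]$), so that every non-local increment is dominated by the plain $L^2$-norms of $\Delta X,\Delta Y,\Delta Z$; (iii) Young's inequality, together with the usual absorption of a fraction of $\mathbb E\int_0^T|\Delta Z|^2\,dt$ into the backward side (the $k_{10},k_{11}$-terms entering only quadratically). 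This yields $\|(\Delta X,\Delta Y,\Delta Z)\|\le\Lambda\,\|(\text{data difference})\|$ with $\Lambda$ as small as desired once $k_2,\dots,k_7,k_{12},k_{14},\dots,k_{17}$ are below a threshold fixed by $\rho_1,\rho_2,T$ and $k_1,k_8,k_9,k_{10},k_{11},k_{13}$; this gives uniqueness directly and, fed into the iteration, a contraction for $\delta\le\delta_1$.

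For the $T$-free version the only change is to keep the dissipation sharp rather than absorb it crudely: tracking in the combined It\^{o} computation the exact coefficients of $\mathbb E\int_0^T|\Delta X|^2$ and $\mathbb E\int_0^T|\Delta Y|^2$, which after the bookkeeping above are (up to harmless $\varepsilon$-terms) $2\rho_1+2k_1+k_{12}^2+k_{13}^2$ and $2\rho_2+2k_8+2k_9+k_{10}^2+k_{11}^2$, the hypothesis $2\rho_1+2\rho_2<-2k_1-2k_8-2k_9-k_{10}^2-k_{11}^2-k_{12}^2-k_{13}^2$ makes their sum negative, which is exactly what closes the coupled Gronwall estimate with a non-positive exponent; then $\Lambda$, and hence $\delta_2$, can be chosen independent of $T$, and the continuation argument runs verbatim.

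The main obstacle is not the continuation scaffolding---which is classical---but handling the three distinct non-local operators $\mathbb E[\cdot]$, $\mathcal E_t[\cdot]$, $\widetilde{\mathbb E}[\cdot]$ and the double projection $\mathbf P_\Gamma[R^{-1}\mathcal E_t[\cdot]]$ \emph{simultaneously} while keeping every constant in the a priori estimate genuinely uniform in $\lambda_0$ (and, for the second part, in $T$). One must in particular check that the continuum spatial index $\theta$ enters only through $\widetilde{\mathbb E}[\cdot]$ and through $\Theta$-measurable coefficients, so that the entire argument can be carried out in $L^2$ over the product space $\Omega_1\times\Omega_2$ (cf.\ \eqref{product space}), with no need for an $\mathcal S$-indexed family of Brownian motions---this is precisely where the homogeneity-by-augmentation reformulation of Section~\ref{unified quasi-exchangeable} is used. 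A further technical point is that the two projections do not commute, so $\mathbb B_1$ and $\mathbb D_\Theta$ cannot be simplified and must be treated as given Lipschitz nonlinearities with the prescribed small moduli $k_2,\dots,k_5$ and $k_{14},\dots,k_{17}$.
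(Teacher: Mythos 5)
Your overall architecture is sound in spirit, and your a~priori estimates (combined It\^{o} computation on $|\Delta X|^2$ and $|\Delta Y|^2$, $L^2$-contractivity of $\mathbf P_\Gamma$, $\mathcal E_t[\cdot]$, $\mathbb E[\cdot]$, $\widetilde{\mathbb E}[\cdot]$, and the bookkeeping that reduces the $T$-free case to the sign of $2\rho_1+2\rho_2+2k_1+2k_8+2k_9+k_{10}^2+k_{11}^2+k_{12}^2+k_{13}^2$) are essentially the same estimates the paper derives. The route, however, is different --- you use the method of continuation, whereas the paper decouples the system into a forward solution map $\mathcal M_1:(Y,Z)\mapsto X$ and a backward solution map $\mathcal M_2:X\mapsto(Y,Z)$ and shows directly that $\mathcal M_2\circ\mathcal M_1$ is a contraction in an exponentially weighted norm $e^{-\rho t}$, with $\rho$ chosen in the window $\left(2\rho_1+\cdots,\,-2\rho_2-\cdots\right)$ for the $T$-independent statement --- and this difference is where a genuine gap appears.

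The step that fails is the Picard iteration inside your continuation lemma. To pass from $(E_{\lambda_0})$ to $(E_{\lambda_0+\delta})$ you move the increment $\delta\bigl[b(t,X^n,\dots)-\rho_1X^n\bigr]$ (and the analogous term in $f$) into the inhomogeneous data and must then show the iteration map is a contraction, which requires an estimate of the form $\|b(t,X^n,\dots)-b(t,X^{n-1},\dots)\|_{L^2}\le C\|X^n-X^{n-1}\|_{L^2}$. But (H1) provides only the \emph{one-sided} bound $\langle b(t,x_1,\dots)-b(t,x_2,\dots),x_1-x_2\rangle\le\rho_1|x_1-x_2|^2$ in the $x$-slot (and likewise for $f$ in the $y$-slot); there is no two-sided Lipschitz bound in these diagonal variables, so the increment you freeze at the previous iterate cannot be controlled and the contraction does not close. (Even if one added such a bound, the continuation step size --- hence $\delta_1$ --- would depend on the new Lipschitz constant, which is not among the parameters listed in the theorem.) The paper's decomposition avoids this entirely: the monotonicity in $x$ (resp.\ $y$) is used only through It\^{o}'s formula to solve and estimate the forward SDE \eqref{1} (resp.\ the backward equation \eqref{7}) for \emph{frozen} off-diagonal inputs, and the smallness of $k_2,\dots,k_7,k_{12},k_{14},\dots,k_{17}$ enters only in making the composed map a contraction. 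To repair your argument you would either need to add a full Lipschitz hypothesis on $b$ in $x$ and $f$ in $y$, or abandon the continuation scaffolding in favor of the two-map fixed point.
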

Let $\rho_1^*=\mathop{esssup}\limits_{0\leq s\leq T}\mathop{esssup}\limits_{\theta\in\mathcal S}\Lambda_{\max}(-\frac{1}{2}(A_\theta(s)+A_\theta(s)^\top))$ and $\rho_2^*=\mathop{esssup}\limits_{0\leq s\leq T}\Lambda_{\max}(-\frac{1}{2}(\mathbb B(s)+\mathbb B(s)^\top))$, where $\Lambda_{\max}(M)$ is the largest eigenvalue of the matrix $M$. For $M(\cdot)\in L^\infty_{\mathbb F}(0,T;\mathbb R^{n\times n})$, $\|M(\cdot)\|\triangleq\mathop{esssup}\limits_{0\leq s\leq T}\mathop{esssup}\limits_{\omega\in\Omega}\|M(s)\|$.
Comparing \eqref{nonlinear FBSDE} with \eqref{CC-equivalent form}, we can check that the parameters of (H1) and (H2) can be chosen as follows:
\begin{equation*}\begin{aligned}
&k_1=\|F\|,k_2=k_4=k_{12}=k_{14}=0, k_3=\| B\|\|R^{-1}\|\| B\|, k_5=\| B\|\|R^{-1}\|\| D_{\Theta}\|,  \\
& k_6=\|\mathbb A_2\|,k_7=\|\bar{\mathbb A}_2\|, k_8=\|\bar{\mathbb B}_2\|, k_9=\|\widetilde{\mathbb B}_2\|, k_{10}=\|\mathbb C_2\|, k_{11}=\|\bar{\mathbb C}_2\|, k_{12}=\sqrt{3}\|C\|,\\ & k_{13}=\sqrt{3}\|\widetilde F\|,k_{15}=\sqrt{6}\|{D}_{\Theta}\|\| R^{-1}\|\| B\|, k_{17}=\sqrt{6}\|{D}_{\Theta}\|\| R^{-1}\|\| D_{\Theta}\|.
\end{aligned}\end{equation*}
Now we introduce the following assumption:
\begin{description}
  \item[(A4)] $2\rho_1^*+2\rho_2^*<-2\|F\|-2\|\bar{\mathbb B}_2\|-2\|\widetilde{\mathbb B}_2\|-\|\mathbb C_2\|^2-\|\bar{\mathbb C}_2\|^2-3\|C\|^2-3\|\widetilde F\|^2.$
\end{description}
It follows from Theorem \ref{discounting} that
\begin{proposition}
Under \emph{(A4)},
there exists a constant $\delta_3>0$ depending on $\rho_1^*,\rho_2^*,k_i,i=1,8,9,10,11,13$ such that if $k_i\in[0,\delta_3)$, $i=3,5,6,7,15,16,17$, FBSDEs \eqref{CC-equivalent form} admits a unique adapted solution $(X,Y,Z)\in L^2_{\mathcal F}(0,T;\mathbb R^n)\times L^2_{\mathcal F}(0,T;\mathbb R^{3n})\times L^2_{\mathcal F}(0,T;\mathbb R^{3n}).$
\end{proposition}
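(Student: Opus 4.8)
Proof proposal.

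The plan is to recognize \eqref{CC-equivalent form} as a particular instance of the abstract mean-field forward-backward system with double projections \eqref{nonlinear FBSDE}, and then invoke the second assertion of Theorem~\ref{discounting}; no new analytic machinery beyond that theorem is needed. First I would read off the data: set $b(t,X,\mathbb E[X],Y,\mathcal E_t[Y],Z,\mathcal E_t[Z])=A_\Theta X+F\mathbb E[X]+\mathbb B_1(Y,Z)$, $\sigma(\cdot)=CX+\widetilde F\mathbb E[X]+\mathbb D_\Theta(Y,Z)$, and $f(t,X,\mathbb E[X],Y,\mathbb E[Y],\widetilde{\mathbb E}[Y],Z,\mathbb E[Z])=-\bigl[\mathbb A_2X+\bar{\mathbb A}_2\mathbb E[X]+\mathbb B_2Y+\bar{\mathbb B}_2\mathbb E[Y]+\widetilde{\mathbb B}_2\widetilde{\mathbb E}[Y]+\mathbb C_2Z+\bar{\mathbb C}_2\mathbb E[Z]\bigr]$. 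Here $\mathbb B_1$ and $\mathbb D_\Theta$ depend on $(Y,Z)$ only through $(\mathcal E_t[Y],\mathcal E_t[Z])$, and the structural requirement $\mathbb E[\widetilde{\mathbb E}[Y]]=\mathbb E[Y]$ imposed in \eqref{nonlinear FBSDE} holds because $\widetilde{\mathbb E}[Y]$ merely averages out the spatial variable $\theta$ (on the product space $\Omega_1\times\Omega_2$ it is a partial expectation), so that $\widetilde{\mathbb B}_2\widetilde{\mathbb E}[Y]=\mathbb E[\widetilde{\mathbb B}_2Y]$ as already observed after \eqref{CC-equivalent form}.

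Second I would verify (H1) and (H2) with $\rho_1=\rho_1^*$, $\rho_2=\rho_2^*$ and the displayed constants $k_i$. The monotonicity bounds reduce, since the non-averaged $X$-dependence of $b$ is only through $A_\Theta X$ and the non-averaged $Y$-dependence of $f$ only through $-\mathbb B_2Y$, to uniform largest-eigenvalue estimates for $-\tfrac12(A_\theta(s)+A_\theta(s)^\top)$ and $-\tfrac12(\mathbb B(s)+\mathbb B(s)^\top)$ (with $\mathbb B:=\mathbb B_2$); these are finite by (A2), giving exactly $\rho_1^*$ and $\rho_2^*$, and one should record the direction of these one-sided inequalities so they match (H1). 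The Lipschitz constants of the affine terms are the operator norms $\|F\|,\|\mathbb A_2\|,\|\bar{\mathbb A}_2\|,\|\bar{\mathbb B}_2\|,\|\widetilde{\mathbb B}_2\|,\|\mathbb C_2\|,\|\bar{\mathbb C}_2\|,\|C\|,\|\widetilde F\|$, with $k_2=k_4=k_{14}=0$ because $b,\sigma$ do not involve the non-conditioned $Y$ nor $\mathcal E_t[Z]$ in those slots. For the projection terms I would use that $\mathbf P_\Gamma$ is non-expansive for $\|\cdot\|_R$, equivalently $v\mapsto\mathbf P_\Gamma[R^{-1}v]=R^{-1/2}\Pi_{R^{1/2}\Gamma}[R^{-1/2}v]$ is Lipschitz with constant $\|R^{-1}\|$ for the Euclidean norm, together with the $L^2$-contractivity of $\mathbb E[\,\cdot\,|\mathcal G_t]$; combined with $|a+b+c|^2\le 3(|a|^2+|b|^2+|c|^2)$ applied to $\sigma=CX+\widetilde F\mathbb E[X]+\mathbb D_\Theta(Y,Z)$ and a further factor $2$ from separating the dependence of $\mathbb D_\Theta$ on $\mathcal E_t[Y]$ and on $\mathcal E_t[Z]$, this yields $k_3=\|B\|\|R^{-1}\|\|B\|$, $k_5=\|B\|\|R^{-1}\|\|D_\Theta\|$, $k_{12}=\sqrt3\|C\|$, $k_{13}=\sqrt3\|\widetilde F\|$, $k_{15}=\sqrt6\|D_\Theta\|\|R^{-1}\|\|B\|$, $k_{17}=\sqrt6\|D_\Theta\|\|R^{-1}\|\|D_\Theta\|$. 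Condition (H2) is immediate: evaluated at the origin, $b,\sigma,f$ are constant multiples of $\mathbf P_\Gamma[0]$ plus coefficients in $L^\infty$, so the integrals are finite by (A2).

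Third, I would observe that under this substitution the inequality $2\rho_1+2\rho_2<-2k_1-2k_8-2k_9-k_{10}^2-k_{11}^2-k_{12}^2-k_{13}^2$ entering the second half of Theorem~\ref{discounting} is literally (A4), using $k_1=\|F\|$, $k_8=\|\bar{\mathbb B}_2\|$, $k_9=\|\widetilde{\mathbb B}_2\|$, $k_{10}=\|\mathbb C_2\|$, $k_{11}=\|\bar{\mathbb C}_2\|$, $k_{12}^2=3\|C\|^2$, $k_{13}^2=3\|\widetilde F\|^2$. Hence the hypotheses of that theorem are met, and it supplies a constant $\delta_3$ (the $\delta_2$ of the theorem), depending on $\rho_1^*,\rho_2^*$ and $k_i$ for $i\in\{1,8,9,10,11,13\}$, such that when the remaining relevant constants $k_i$, $i\in\{3,5,6,7,15,16,17\}$, lie in $[0,\delta_3)$ (the others among the theorem's smallness list being $0$), \eqref{CC-equivalent form} has a unique adapted solution; the spaces are $L^2_{\mathcal F}(0,T;\mathbb R^n)\times L^2_{\mathcal F}(0,T;\mathbb R^{3n})\times L^2_{\mathcal F}(0,T;\mathbb R^{3n})$ because $Y=(\gamma^\top,\check y_1^\top,(\check y_2^\theta)^\top)^\top$ consists of three $n$-blocks and $Z=(\vartheta^\top,\check\beta_1^\top,0)^\top$ has the same format.

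I expect the only non-routine step to be the verification of (H1), and within it the projection-related constants $k_3,k_5,k_{15},k_{17}$: one has to combine correctly the weighted-norm non-expansiveness of $\mathbf P_\Gamma$, the $R^{-1}$-rescaling, and the conditional-expectation contraction, and keep track of the numerical factors ($\sqrt3$ from the three-term splitting of $\sigma$, $\sqrt6$ from the additional two-term splitting inside $\mathbb D_\Theta$) so that they reproduce precisely the coefficients appearing in (A4). A secondary point to get right is the sign/direction of the two monotonicity inequalities (so that $\rho_1^*,\rho_2^*$ are admissible choices in (H1)) and the identity $\mathbb E[\widetilde{\mathbb E}[Y]]=\mathbb E[Y]$. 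Everything downstream — writing out the block matrices $\mathbb A_2,\bar{\mathbb A}_2,\dots$, checking (H2), matching (A4), and the final citation — is bookkeeping on top of Theorem~\ref{discounting}.
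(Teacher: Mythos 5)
Your proposal is correct and follows essentially the same route as the paper: the paper's own proof consists precisely of matching \eqref{CC-equivalent form} to the abstract system \eqref{nonlinear FBSDE}, listing the constants $k_i$ (with $k_2=k_4=k_{14}=0$, the $\sqrt{3}$ and $\sqrt{6}$ factors from splitting $\sigma$ and $\mathbb D_\Theta$, and the projection/conditional-expectation contractions), observing that (A4) is exactly the strict inequality in the second half of Theorem~\ref{discounting}, and citing that theorem. Your additional detail on the non-expansiveness of $\mathbf P_\Gamma$ under $\|\cdot\|_R$ and the identity $\mathbb E[\widetilde{\mathbb E}[Y]]=\mathbb E[Y]$ only makes explicit what the paper leaves as "we can check".
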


\section{Asymptotic $\varepsilon$-optimality}\label{asymptotic optimality}
This section aims to complete (Step 4) so as to verify the asymptotic optimality of mean-field team strategy derived in Section \ref{decentralized strategy}.
Here we proceed our verification based on the assumption in Section \ref{well-posedness of CC}, i.e., (A4).

\subsection{Representation of social cost}\label{Representation of social cost}
First, we give a quadratic representation of the team functional. Rewrite the large-population system \eqref{state equation} as follows:
\begin{equation}\label{LP system}
  \begin{aligned}d\mathbf{x} = (\mathbf{A}\mathbf{x} + \mathbf{B}u)dt + \sum_{i = 1}^{N}(\mathbf{C}_i\mathbf{x} + \mathbf{D}_iu)dW_i, \qquad\mathbf{x}(0) = \widetilde{\xi},\\
  \end{aligned}
\end{equation}
where
\begin{equation*}
  \begin{aligned}
     & \mathbf{A}=
    \left(\begin{smallmatrix}
      A_{\Theta_1} + \frac{F}{N} & \frac{F}{N} & \cdots & \frac{F}{N} \\
      \frac{F}{N} & A_{\Theta_2} + \frac{F}{N} & \cdots & \frac{F}{N} \\
      \vdots & \vdots & \ddots &\vdots\\
      \frac{F}{N} & \frac{F}{N} & \cdots & A_{\Theta_N} + \frac{F}{N} \\
    \end{smallmatrix}\right),
    \mathbf{x}=
    \left(\begin{smallmatrix}
      x_1 \\
      \vdots\\
      x_N
    \end{smallmatrix}\right),
    \mathbf{B}=
    \left(\begin{smallmatrix}
      B & 0 & \cdots & 0 \\
      0 & B & \cdots & 0 \\
     \vdots & \vdots & \ddots &\vdots\\
      0 & 0 & \cdots & B \\
    \end{smallmatrix}\right),
u=
    \left(\begin{smallmatrix}
      u_1 \\
      \vdots\\
      u_N
    \end{smallmatrix}\right),\\
    & \mathbf{C}_i=\begin{smallmatrix}
      1 \\
      \vdots \\
      i \\
      \vdots \\
      N
    \end{smallmatrix}
    \left(\begin{smallmatrix}
      0 & \cdots & 0 & 0 & 0 & \cdots & 0\\
      \vdots & \vdots & \vdots & \vdots & \vdots & \vdots & \vdots\\
      \frac{\widetilde{F}}{N} & \cdots &\frac{\widetilde{F}}{N} & \frac{\widetilde{F}}{N} + C &\frac{\widetilde{F}}{N}
      &\cdots & \frac{\widetilde{F}}{N}\\
      \vdots & \vdots & \vdots & \vdots & \vdots & \vdots & \vdots\\
      0 & \cdots & 0 & 0 & 0 & \cdots & 0\\
    \end{smallmatrix}\right),
    \mathbf{D}_i=\begin{smallmatrix}
      1 \\
      \vdots \\
      i \\
      \vdots \\
      N
    \end{smallmatrix}\left(\begin{smallmatrix}
      0 & \cdots & 0 & 0 & 0 &  \cdots & 0\\
      \vdots & \vdots & \vdots & \vdots & \vdots & \vdots & \vdots\\
      0 & \cdots & 0  & D_{\Theta_i} &0  & \cdots  & 0\\
      \vdots & \vdots & \vdots & \vdots & \vdots & \vdots & \vdots\\
      0 & \cdots & 0  & 0 &0  & \cdots  & 0\\
    \end{smallmatrix}\right),
    \widetilde\xi = \left(\begin{smallmatrix}
      \xi\\
      \vdots\\
      \xi
    \end{smallmatrix}\right).
  \end{aligned}
\end{equation*}
Similarly, the social cost takes the following form:
\begin{equation*}
  \begin{aligned}
  \mathcal{J}^{(N)}_{soc}(u)
   =& \frac{1}{2}\sum_{i=1}^n\mathbb E\int_0^T\Big[\langle Q(x_i- H x^{(N)}),(x_i- Hx^{(N)})\rangle+\langle Ru_i,u_i\rangle\Big]dt                    \\
   =& \frac{1}{2}\mathbb{E}\int_{0}^{T}\Big[ \langle \mathbf{Q}\mathbf{x},\mathbf{x}\rangle +\langle \mathbf{R}u,u\rangle \Big]dt, \\
  \end{aligned}
\end{equation*}
where
{\small{\begin{equation*}
  \begin{aligned}
     & \mathbf{Q}=
    \left(\begin{smallmatrix}
 Q + \frac{1}{N}(H^\top QH - QH - H^\top Q) &   \frac{1}{N}(H^\top QH - QH - H^\top Q) &  \cdots & \frac{1}{N}(H^\top QH - QH - H^\top Q) \\
  \frac{1}{N}(H^\top QH - QH - H^\top Q) & Q + \frac{1}{N}(H^\top QH - QH - H^\top Q) & \cdots & \frac{1}{N}(H^\top QH - QH - H^\top Q) \\
   \vdots & \vdots & \ddots &\vdots\\
 \frac{1}{N}(H^\top QH - QH - H^\top Q) & \frac{1}{N}(H^\top QH - QH - H^\top Q) & \cdots & Q + \frac{1}{N}(H^\top QH - QH - H^\top Q) \\
    \end{smallmatrix}\right),
  \mathbf{R} = \left(\begin{smallmatrix}
      R & 0 & \cdots & 0\\
      0 & R & \cdots & 0\\
      \vdots & \vdots & \ddots & \vdots\\
      0 & 0 & \cdots & R\\
    \end{smallmatrix}\right).
  \end{aligned}
\end{equation*}}}
Next, by the variation of constant formula, we know that the strong solution of \eqref{LP system} admits the following representation:
\begin{equation*}
  \begin{aligned}
    \mathbf{x}(t) =  \Phi(t)\widetilde\xi + \Phi(t)\int_{0}^{t} \Phi(s)^{-1}[(\mathbf{B} - \sum_{i = 1}^{N}\mathbf{C}_i\mathbf{D}_i)u(s)]ds + \sum_{i = 1}^{N}\Phi(t) \int_{0}^{t}\Phi(s)^{-1}\mathbf{D}_iu(s)dW_i(s),
  \end{aligned}
\end{equation*}
where
\begin{equation*}
  \begin{aligned}
      d\Phi(t) = \mathbf{A}\Phi(t)dt + \sum_{i = 1}^{N}\mathbf{C}_i\Phi(t)dW_i(t), \qquad \Phi(0) = I.
  \end{aligned}
\end{equation*}
Define the following operators
\begin{equation*}
  \left\{
  \begin{aligned}
     &\phi( u)(\cdot) := \Phi(\cdot)\Big\{\int_{0}^{\cdot}\Phi(s)^{-1}[(\mathbf{B} - \sum_{i = 1}^{N}\mathbf{C}_i\mathbf{D}_i)u(s)]ds+ \sum_{i = 1}^{N} \int_{0}^{\cdot}\Phi(s)^{-1}\mathbf{D}_iudW_i(s)\Big\} \\
     & \widetilde{\phi}(u) :=\phi (u)(T),\quad \mathcal{S} (y)(\cdot) := \Phi(\cdot)\Phi^{-1}(0)\widetilde\xi,\quad \widetilde{\mathcal{S}}(y) :=\mathcal{S} (y)(T),
  \end{aligned}
  \right.
\end{equation*}
then for any admissible control $u$, we have
\begin{equation*}
  \begin{aligned}
      \mathbf{x}(\cdot) =\phi (u)(\cdot) + \mathcal{S} (y)(\cdot), \qquad \mathbf{x}(T) = \widetilde{\phi}(u) + \widetilde{\mathcal{S}}(y).
  \end{aligned}
\end{equation*}
Note that $\phi(\cdot):(L^2_{\mathcal F}(0,T;\Gamma),\cdots, L^2_{\mathcal F}(0,T;\Gamma))\rightarrow (L^2_{\mathcal F}(0,T;\mathbb R^n),\cdots,L^2_{\mathcal F}(0,T;\mathbb R^n))$ is a bounded linear operator, thus there exists a unique bounded linear operator $\phi^*(\cdot):(L^2_{\mathcal F}(0,T;\mathbb R^n),\cdots,L^2_{\mathcal F}(0,T;\mathbb R^n))\rightarrow (L^2_{\mathcal F}(0,T;\Gamma),\cdots, L^2_{\mathcal F}(0,T;\Gamma))$ such that for any $u(\cdot)\in(L^2_{\mathcal F}(0,T;\Gamma),\cdots, L^2_{\mathcal F}(0,T;\Gamma))$ and \\$\mathbf x(\cdot)\in(L^2_{\mathcal F}(0,T;\mathbb R^n),\cdots,L^2_{\mathcal F}(0,T;\mathbb R^n))$,
$$\mathbb E\int_0^T\langle \phi(u)(t),\mathbf x(t)\rangle dt=\mathbb E\int_0^T\langle u(t),\phi^*(\mathbf x)(t)\rangle dt.$$
Hence, we can rewrite the cost functional as follows:
\begin{equation*}
  \begin{aligned}
2\mathcal{J}^{(N)}_{soc}(u)   
  = &\mathbb{E}\int_{0}^{T} \Big[\langle (\phi^*\mathbf Q\phi + \mathbf{R})u,u\rangle + 2\langle \phi^*\mathbf Q\mathcal{S} (y) ,u\rangle +\langle\mathbf Q\mathcal{S} (y),\mathcal{S} (y)\rangle \Big]dt \\
  := & \langle M_2(u)(\cdot),u(\cdot)\rangle + 2\langle M_1,u(\cdot)\rangle  + M_0 ,\\
  \end{aligned}
\end{equation*}
where we have used $\langle\cdot,\cdot\rangle$ as inner products in different Hilbert spaces.
Note that, $M_2(\cdot)$ is a  bounded self-adjoint positive semi-definite linear operator.

\subsection{Agent $\mathcal A_i$ perturbation}\label{Minor agent's perturbation}

Let $\widetilde {\mathbf u}=(\widetilde u_1,\cdots,\widetilde u_N)$ be decentralized strategy given by
\begin{equation}\label{asymptotic optimal strategy}
\widetilde u_i(t)=\varphi_{\Theta_i}( p_i(t), q_i(t)):=\mathbf P_{\Gamma}[R(t)^{-1}\mathbb E[B(t)^\top p_i(t)+D_{\Theta_i}(t)^\top q_i(t)|\mathcal G_t^i]],\ i=1,\cdots,N,
\end{equation}
 where
 $(p_i,q_i)$ is the solution of
 \begin{equation*}\left\{\begin{aligned}
 &dx_i=[A_{\Theta_i}x_i+B\varphi_{\Theta_i}( p_i, q_i)+F\mathbb E\alpha]dt+[Cx_i+D_{\Theta_i}\varphi_{\Theta_i}( p_i, q_i)+\widetilde F\mathbb E\alpha]dW_i(t),\\
&dp_i=[-Qx_i+(QH+ H Q-HQH)\mathbb E\alpha-A_{\Theta_i}^\top p_i+F^\top \int_\mathcal{S}\check y_2^\theta d\Phi(\theta)+F^\top \mathbb E\check{y}_1\\
&\qquad\quad-C^\top q_i+\widetilde F^\top\mathbb E\check{\beta}_1]dt+q_idW_i(t),\\
 &x_i(0)=\xi,\quad p_i(T)=0,\quad i=1,\cdots,N.
\end{aligned}\right.\end{equation*}
 Here, $(\alpha,\check{y}_1,\check{\beta}_1,\check{y}_2^\theta)$ is the solution of \eqref{CC}.
Correspondingly, the realized decentralized states $(\widetilde x_1,\cdots,\widetilde x_N)$ satisfy
\begin{equation}\label{optimal state-realized}\left\{\begin{aligned}
&d\widetilde x_i=[A_{\Theta_i}\widetilde x_i+B\varphi_{\Theta_i}( p_i, q_i)+F\widetilde x^{(N)}]dt+[C
\widetilde x_i+D_{\Theta_i}\varphi_{\Theta_i}( p_i, q_i)+\widetilde F\widetilde x^{(N)}]dW_i(t),\\
&\widetilde x_i(0)=\xi,
\end{aligned}\right.\end{equation}
and $\widetilde x^{(N)}=\frac{1}{N}\sum_{i=1}^N\widetilde x_i$.

Let us consider the case that the agent $\mathcal A_i$ (without loss of generality, assume $i>1$) uses an alternative strategy $u_i\in\mathcal U_i^{c,f}$ while the other agents $\mathcal A_j,j\neq i$ use the strategy $\widetilde u_{-i}$. The realized state with the $i$-th agent's perturbation is
\begin{equation*}\left\{\begin{aligned}
&d  \acute{x}_i=[A_{\Theta_i} \acute x_i+B u_i+F \acute x^{(N)}]dt+[C  \acute x_i+D_{\Theta_i} u_i
+\widetilde  F \acute x^{(N)}]dW_i,\\
&d \acute x_j=[A_{\Theta_j} \acute x_j+B \varphi_{\Theta_j}( p_j, q_j)+F \acute x^{(N)}]dt+[C  \acute x_j+D_{\Theta_{j}}\varphi_j( p_j, q_j)
+\widetilde  F  \acute x^{(N)}]dW_j,\\
&\acute x_i(0)=\xi,\quad \acute x_j(0)=\xi,\quad 1\leq j\leq N,\quad j\neq i,
\end{aligned}\right.\end{equation*}
where $\acute x^{(N)}=\frac{1}{N}\sum_{i=1}^N\acute  x_i$. For $j=1,\cdots,N$, denote the perturbation
$$\delta u_i= u_i-\widetilde u_i,\quad \delta x_j=\acute x_j-\widetilde x_j,\quad \delta\mathcal J_j=\mathcal J_j(u_i,\widetilde u_{-i})-\mathcal J_j(\widetilde u_i,\widetilde u_{-i}).$$
Introducing the following frozen states
\begin{equation}\label{optimal state-frozen}\left\{\begin{aligned}
&d\widetilde l_j=[A_{\Theta_j}\widetilde l_j+B\varphi_{\Theta_j}( p_j, q_j)+F\mathbb E\alpha]dt+[C
\widetilde l_j+D_{\Theta_j}\varphi_{\Theta_j}( p_j, q_j)+\widetilde F\mathbb E\alpha]dW_j(t),\\
&\widetilde l_j(0)=\xi,\quad j=1,\cdots,N,
\end{aligned}\right.\end{equation}
and
\begin{equation}\nonumber\left\{\begin{aligned}
&d  \acute{l}_i=[A_{\Theta_i} \acute l_i+B u_i+F \mathbb E\alpha]dt+[C  \acute l_i+D_{\Theta_i} u_i
+\widetilde  F \mathbb E\alpha]dW_i,\\
&d \acute l_j=[A_{\Theta_j} \acute l_j+B \varphi_{\Theta_j}( p_j, q_j)+F \mathbb E\alpha]dt+[C  \acute l_j+D_{\Theta_{j}}\varphi_j( p_j, q_j)
+\widetilde  F  \mathbb E\alpha]dW_j,\\
&\acute l_i(0)=\xi,\quad \acute l_j(0)=\xi,\quad 1\leq j\leq N,\quad j\neq i.
\end{aligned}\right.\end{equation}
Similar to the computations in Section \ref{p-b-p optimality}, we have
\begin{equation*}\begin{aligned}
\delta \mathcal J_{soc}^{(N)}
 =\mathbb E\int_0^T&\Big[\langle Q\widetilde x_i,\delta x_i\rangle-\langle \Xi,\delta x_i\rangle
+\langle R\widetilde u_i,\delta u_i\rangle \Big] dt+\sum_{l=1}^{5}\epsilon_l,
\end{aligned}\end{equation*}
where
\begin{equation*}\left\{\begin{aligned}
&\epsilon_1=E\int_0^T\langle (QH+HQ- H Q H)(\mathbb E\alpha-\widetilde x^{(N)}),N\delta x^{(N)}\rangle dt,\\
&\epsilon_2=E\int_0^T\langle (QH+ H Q-HQH)\mathbb E\alpha,x^{**}-\delta x_{-i}\rangle dt,\\
&\epsilon_3=E\int_0^T\frac{1}{N}\sum_{j\neq i}\langle Q\widetilde x_j,N\delta x_j-x^*_j\rangle dt,\\
&\epsilon_{4}=\mathbb E\int_0^T\langle F^\top(\mathbb E[y_1^1]-\frac{1}{N}\sum_{j\neq i}y_1^j)+\widetilde F^\top(\mathbb E[\beta_{1}^{11}]-\frac{1}{N}\sum_{j\neq i}\beta_{1}^{jj}) ,\delta x_i\rangle dt,\\
&\epsilon_{5}=\mathbb E\int_0^T\langle F^\top(\mathbb E[y_1^1]-\frac{1}{N}\sum_{j\neq i}y_1^j)+\widetilde F^\top(\mathbb E[\beta_{1}^{11}]-\frac{1}{N}\sum_{j\neq i}\beta_{1}^{jj}) ,x^{**}\rangle dt.\\
\end{aligned}\right.\end{equation*}
Therefore, we have
\begin{equation*}\begin{aligned}
\delta \mathcal J_{soc}^{(N)}
 =\mathbb E\int_0^T&\Big[\langle Q\widetilde l_i,\delta l_i\rangle-\langle \Xi,\delta l_i\rangle
+\langle R\widetilde u_i,\delta u_i\rangle \Big] dt+\sum_{l=1}^{7}\epsilon_l,
\end{aligned}\end{equation*}
where
\begin{equation*}\left\{\begin{aligned}
&\epsilon_{6}=\mathbb E\int_0^T[\langle \acute l_i-\acute x_i,\Xi\rangle +\langle \widetilde l_i-\widetilde x_i,\Xi\rangle]dt,\\
&\epsilon_{7}=\mathbb E\int_0^T[\langle Q(\widetilde x_i-\widetilde l_i),\delta x_i\rangle+\langle Q\widetilde l_i,\acute x_i-\acute l_i\rangle+\langle Q\widetilde l_i, \widetilde x_i-\widetilde l_i\rangle ]dt.\\
\end{aligned}\right.\end{equation*}
First, we need some estimations. In the proofs, $L$ will denote a constant whose value may change from line to line. Applying the same technique as in \cite[Lemma 5.1]{HHN2018}, we have
\begin{lemma}\label{5.1}
There exist two constants $L_1$ and $L_2$ independent of $N$ such that
\begin{equation}\nonumber\begin{aligned}
&\mathbb E\sup_{0\leq t\leq T}\Big[|\alpha|^2+|\gamma|^2+|\check y_1|^2+|\check y_2^\theta|^2\Big]+\sum_{j=1}^N\mathbb E\sup_{0\leq t\leq T}\Big[|x_j|^2+|p_j|^2\Big]\\
&+\mathbb E\int_0^T\Big[|\vartheta|^2+|\check\beta_1|^2\Big]dt+\sum_{j=1}^N\mathbb E\int_0^T\Big[|q_j|^2+|\varphi_{\Theta_j}(p_j,q_j)|^2\Big]dt\leq L_1,
\end{aligned}\end{equation}
and
 \begin{equation}\nonumber
\sup_{1\leq j\leq N}\mathbb E\sup_{0\leq t\leq T}|\widetilde x_j(t)|^2+\sup_{1\leq j\leq N}\mathbb E\sup_{0\leq t\leq T}|\widetilde l_j(t)|^2\leq L_2.
\end{equation}
\end{lemma}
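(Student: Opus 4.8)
The plan is to split the listed processes into two groups. Group (a) is the sextuple $(\alpha,\gamma,\vartheta,\check y_1,\check\beta_1,\check y_2^\theta)$ solving \eqref{CC}: this is an $N$-free object, so under (A4) it is well-posed by the discounting argument behind Theorem \ref{discounting}, and the a priori estimate that comes with it already gives the required bound. Concretely, I would apply It\^o's formula to $|\alpha|^2$ and $|\gamma|^2$, use the Lipschitz data bounds recorded in (H1) with the structural constants listed just before (A4) (all finite by (A2)--(A3) and uniform over $\theta\in\mathcal S$), together with the Burkholder--Davis--Gundy inequality and Gronwall's lemma, to get $\mathbb E\sup_t(|\alpha|^2+|\gamma|^2)+\mathbb E\int_0^T|\vartheta|^2dt\le C$ with $C$ depending only on $T$, $|\xi|$, the $L^\infty$-data and $\|R^{-1}\|$. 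The $\check y_1$-equation is a linear BSDE affine in $(\alpha,\check y_1,\check\beta_1)$, so $\mathbb E\sup_t|\check y_1|^2+\mathbb E\int_0^T|\check\beta_1|^2dt$ is bounded by the same routine. Finally $\check y_2^\theta$ solves a degenerate (no martingale term) linear backward equation whose driver involves only $\mathbb E\alpha,\mathbb E\check y_1,\mathbb E\check\beta_1$ and $\check y_2^\theta$ itself with coefficient $A_\theta^\top+F^\top$; since $\sup_{\theta\in\mathcal S}\|A_\theta\|_{L^\infty}<\infty$ by (A2), a backward Gronwall estimate yields $\sup_{\theta\in\mathcal S}\mathbb E\sup_t|\check y_2^\theta|^2<\infty$, which controls both the $|\check y_2^\theta|^2$ term and $\int_{\mathcal S}\check y_2^\theta d\Phi(\theta)$.

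Group (b) is the family of triples $(x_j,p_j,q_j)$ from the single-agent forward--backward system introduced in Section \ref{asymptotic optimality}. For each fixed $j$ this is a fully forward--backward coupled FBSDE driven by $W_j$ alone, with the projection nonlinearity $\varphi_{\Theta_j}$ and inhomogeneous inputs $F\mathbb E\alpha$, $\widetilde F\mathbb E\alpha$, $(QH+HQ-HQH)\mathbb E\alpha$, $F^\top\int_{\mathcal S}\check y_2^\theta d\Phi(\theta)$, $F^\top\mathbb E\check y_1$, $\widetilde F^\top\mathbb E\check\beta_1$, all of which are bounded by group (a) and are $N$-free. I would first obtain existence, uniqueness and a priori bounds from the single-agent specialisation of Theorem \ref{discounting}, then apply It\^o to $|x_j|^2$ and $|p_j|^2$, using that $\mathbf P_\Gamma$ is nonexpansive in $\|\cdot\|_R$ (hence Euclidean-Lipschitz with constant controlled by $\|R\|,\|R^{-1}\|$, so that $|\varphi_{\Theta_j}(p_j,q_j)|\le L(1+|p_j|+|q_j|)$ uniformly in $j$ after conditional-Jensen), together with BDG and Gronwall, to reach $\mathbb E\sup_t(|x_j|^2+|p_j|^2)+\mathbb E\int_0^T(|q_j|^2+|\varphi_{\Theta_j}(p_j,q_j)|^2)dt\le L$ with $L$ depending only on $|\xi|$, $T$, the $L^\infty$-data and the group-(a) bounds, hence independent of $j$ and of $N$. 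Summing over $j=1,\dots,N$ and adding the group-(a) bounds yields the first inequality.

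For the second inequality I would estimate $\widetilde l_j$ and $\widetilde x_j$. Each $\widetilde l_j$ in \eqref{optimal state-frozen} is a linear SDE driven by $W_j$ with bounded coefficients and forcing built from $\varphi_{\Theta_j}(p_j,q_j)$ and $\mathbb E\alpha$; since $\mathbb E\int_0^T|\varphi_{\Theta_j}(p_j,q_j)|^2dt\le L$ uniformly in $j$ by the previous step, It\^o--BDG--Gronwall gives $\mathbb E\sup_t|\widetilde l_j|^2\le L_2$ uniformly in $j$. The realised states $\widetilde x_j$ in \eqref{optimal state-realized} are weakly coupled through $\widetilde x^{(N)}=\frac1N\sum_i\widetilde x_i$; writing $m(t)=\sup_{1\le j\le N}\mathbb E\sup_{s\le t}|\widetilde x_j(s)|^2$ and using $\sup_{s\le t}|\widetilde x^{(N)}(s)|^2\le\frac1N\sum_i\sup_{s\le t}|\widetilde x_i(s)|^2$ so that $\mathbb E\sup_{s\le t}|\widetilde x^{(N)}(s)|^2\le m(t)$, the per-$j$ estimate reads $\mathbb E\sup_{s\le t}|\widetilde x_j(s)|^2\le L(|\xi|^2+\mathbb E\int_0^T|\varphi_{\Theta_j}(p_j,q_j)|^2ds+\int_0^t m(s)ds)$; taking the supremum over $j$ gives $m(t)\le L(C+\int_0^t m(s)ds)$, and Gronwall closes the bound to $m(T)\le L_2$.

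The main obstacle I expect is the group-(b) step: one must secure existence, uniqueness and quantitative a priori bounds for the fully forward--backward coupled single-agent system in the presence of the non-smooth \emph{double} projection, which requires invoking the single-agent case of the discounting/monotonicity machinery of Theorem \ref{discounting} rather than a bare Gronwall argument, and then one must keep the resulting constants manifestly uniform in the agent label $j$ and in the population size $N$ --- this uniformity is exactly where the $N$-free character of the limiting CC data from group (a) is essential. A secondary subtlety is the $\theta$-uniformity of the $\check y_2^\theta$-estimate, needed so that $\int_{\mathcal S}\check y_2^\theta d\Phi(\theta)$ is well defined and bounded; this rests on the uniform-in-$\theta$ data bounds built into (A1)--(A2).
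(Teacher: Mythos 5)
Your proposal is correct and is exactly the standard route the paper has in mind: the paper itself gives no proof of this lemma, only the remark that one ``applies the same technique as in [HHN2018, Lemma 5.1]'', and your two-group decomposition (first the $N$-free CC system via the discounting well-posedness of Theorem \ref{discounting} under (A4), then the per-agent coupled FBSDEs with frozen mean-field inputs, followed by the Gronwall closure of the weakly coupled realized states through $m(t)=\sup_j\mathbb E\sup_{s\le t}|\widetilde x_j(s)|^2$) is precisely that technique, with the right attention to the two genuinely delicate points (securing quantitative $L^2$ bounds on the coupled forward--backward block \emph{before} running It\^o--BDG--Gronwall for the sup-estimates, and the affine nonexpansiveness bound $|\mathbf P_\Gamma[v]|\le|\mathbf P_\Gamma[0]|+C|v|$ combined with conditional Jensen to control $\varphi_{\Theta_j}$ uniformly in $j$). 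No gaps.
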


\begin{lemma}\label{le5.3}
There exists a constant $L_{3}$ independent of $N$ such that
\begin{equation}\nonumber
\mathbb E\sup_{0\leq s\leq t}|\delta x^{(N)}|^2+\sup_{1\leq j\leq N,j\neq i}\mathbb E\sup_{0\leq t\leq T}|\delta x_j|^2\leq \frac{L_{3}}{N^2}.
\end{equation}
\end{lemma}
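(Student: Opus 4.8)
The plan is to subtract the realized closed‑loop state equation \eqref{optimal state-realized} from the perturbed equation and work with the difference processes $\delta x_j=\acute x_j-\widetilde x_j$. For $j\neq i$ both states are driven by the same control $\varphi_{\Theta_j}(p_j,q_j)$, so those terms cancel and one is left with $\delta x_j(0)=0$ and
\[
d\delta x_j=[A_{\Theta_j}\delta x_j+F\delta x^{(N)}]dt+[C\delta x_j+\widetilde F\delta x^{(N)}]dW_j,\qquad j\neq i,
\]
whereas $\delta x_i$ satisfies the same equation with the extra inhomogeneous terms $B\delta u_i\,dt$ in the drift and $D_{\Theta_i}\delta u_i\,dW_i$ in the diffusion; these are exactly \eqref{variation-i}--\eqref{minor-j} with $\delta u_i=u_i-\widetilde u_i$. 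Averaging, $\delta x^{(N)}=\frac1N\sum_{j=1}^N\delta x_j$ solves an It\^o equation whose drift is $\frac1N\sum_{j=1}^N A_{\Theta_j}\delta x_j+\frac1N B\delta u_i+F\delta x^{(N)}$ and whose diffusion coefficient against $dW_j$ is $\frac1N(C\delta x_j+\widetilde F\delta x^{(N)})$ for $j\neq i$ and $\frac1N(C\delta x_i+D_{\Theta_i}\delta u_i+\widetilde F\delta x^{(N)})$ for $j=i$.

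First I would record, via It\^o's formula, the Burkholder--Davis--Gundy (BDG) inequality, the boundedness of the coefficients (A2) and Gr\"onwall's lemma, two a priori estimates: $\mathbb E\sup_{0\le s\le t}|\delta x_i(s)|^2\le L\big(\mathbb E\int_0^T|\delta u_i|^2ds+\int_0^t\mathbb E|\delta x^{(N)}(s)|^2ds\big)$, and $\mathbb E\sup_{0\le s\le t}|\delta x_j(s)|^2\le L\int_0^t\mathbb E|\delta x^{(N)}(s)|^2ds$ for $j\neq i$. Then I would close the estimate for $g(t):=\mathbb E\sup_{0\le s\le t}|\delta x^{(N)}(s)|^2$ by applying It\^o to $|\delta x^{(N)}|^2$. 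Two bookkeeping points are crucial. Because $W_1,\dots,W_N$ are independent, the quadratic‑variation (hence BDG) term is $\frac1{N^2}\sum_{j}|C\delta x_j+\widetilde F\delta x^{(N)}|^2$ plus the analogous $j=i$ term, so it keeps the prefactor $\frac1{N^2}$ rather than the $\frac1N$ one would get under a common noise; and in the aggregated drift one must split off the self term $\frac1N A_{\Theta_i}\delta x_i$ before majorizing $\frac1N\sum_{j\neq i}A_{\Theta_j}\delta x_j$ by Cauchy--Schwarz, so that the former contributes $\frac{L}{N^2}|\delta x_i|^2$ and the latter $\frac{L}{N}\sum_{j\neq i}|\delta x_j|^2$. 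Substituting the two a priori estimates together with $\sum_{j\neq i}\mathbb E\int_0^T|\delta x_j|^2ds\le (N-1)L\int_0^T\mathbb E|\delta x^{(N)}|^2ds$, every term on the right is either $\frac{L}{N^2}\mathbb E\int_0^T|\delta u_i|^2ds$ or bounded by $L\int_0^tg(s)ds$ uniformly in $N\ge1$; absorbing the $\tfrac12g(t)$ produced by BDG and invoking Gr\"onwall yields $\mathbb E\sup_{0\le t\le T}|\delta x^{(N)}(t)|^2\le \frac{L}{N^2}\mathbb E\int_0^T|\delta u_i|^2ds$.

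Feeding this back into the a priori estimate for $\delta x_j$, $j\neq i$, gives $\sup_{j\neq i}\mathbb E\sup_{0\le t\le T}|\delta x_j(t)|^2\le \frac{L}{N^2}\mathbb E\int_0^T|\delta u_i|^2ds$, and since $\mathbb E\int_0^T|\delta u_i|^2ds\le 2\mathbb E\int_0^T|u_i|^2ds+2\mathbb E\int_0^T|\varphi_{\Theta_i}(p_i,q_i)|^2ds\le 2\mathbb E\int_0^T|u_i|^2ds+2L_1$ by Lemma \ref{5.1}, both terms in the statement are bounded by $L_3/N^2$ with $L_3$ depending only on $T$, the coefficient norms and $\mathbb E\int_0^T|u_i|^2ds$, hence independent of $N$. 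The main obstacle is precisely this middle step: each $\delta x_j$ is forced only through the empirical average $\delta x^{(N)}$, which is itself assembled out of all the $\delta x_j$'s, so the estimates have to be propagated through this feedback loop while tracking powers of $N$ sharply enough to land on $N^{-2}$ and not merely $N^{-1}$ — and that is exactly what the independence of $\{W_j\}$ and the isolation of the $j=i$ contribution buy us.
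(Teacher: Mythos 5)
Your proposal is correct: the difference processes do satisfy \eqref{variation-i}--\eqref{minor-j} with $\delta u_i=u_i-\widetilde u_i$ (the decentralized controls of the agents $j\neq i$ cancel upon subtraction), and your Gr\"onwall loop closes with the claimed $N^{-2}$ rate. The paper reaches the same conclusion by a slightly different bookkeeping: instead of running It\^o/BDG directly on the normalized average $\delta x^{(N)}$, it works with the unnormalized aggregate $\delta x_{-i}=\sum_{j\neq i}\delta x_j$ from \eqref{deltax-i}, derives the coupled estimates
$\mathbb E\sup_{s\le t}|\delta x_i|^2\le L+L\mathbb E\int_0^t|\delta x_i|^2ds+\frac{L}{N^2}\mathbb E\int_0^t|\delta x_{-i}|^2ds$ and
$\mathbb E\sup_{s\le t}|\delta x_{-i}|^2\le L\mathbb E\int_0^t(|\delta x_{-i}|^2+|\delta x_i|^2)ds$ via the identity $\delta x^{(N)}=\frac1N\delta x_i+\frac1N\delta x_{-i}$, concludes by Gr\"onwall that $\delta x_i$ and $\delta x_{-i}$ are both $O(1)$ in $L^2$, and only then divides by $N$ to get $\mathbb E\sup|\delta x^{(N)}|^2\le L/N^2$; the individual bound for $\delta x_j$, $j\neq i$, then follows from \eqref{variation-j} exactly as in your last step. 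The paper's pivot on $\delta x_{-i}$ lets it avoid your delicate splitting of the quadratic variation of $\delta x^{(N)}$ into the $j=i$ and $j\neq i$ contributions, since the $1/N$ normalization is applied only at the very end; your version, in exchange, makes fully explicit where the independence of $\{W_j\}$ and the isolation of the single forced agent are responsible for landing on $N^{-2}$ rather than $N^{-1}$, and it keeps the dependence of the constant on $\mathbb E\int_0^T|\delta u_i|^2\,ds$ visible (the paper absorbs this into the generic $L$, consistent with the restriction $\sum_i\mathbb E\int_0^T|u_i|^2dt\le LN$ imposed later). Both arguments are standard linear-SDE estimates plus Gr\"onwall and are equally valid.
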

\begin{proof}
Recall the equations \eqref{variation-i}, \eqref{minor-j} and \eqref{deltax-i}, we have
\begin{equation*}\begin{aligned}
\mathbb E\sup_{0\leq s\leq t}|\delta x_i|^2\leq L+L\mathbb E\int_0^t|\delta x_i|^2ds+L\mathbb E\int_0^t|\delta x^{(N)}|^2ds,
\end{aligned}\end{equation*}
\begin{equation}\label{variation-j}\begin{aligned}
\mathbb E\sup_{0\leq s\leq t}|\delta x_j|^2\leq L\mathbb E\int_0^t|\delta x_j|^2ds+L\mathbb E\int_0^t|\delta x^{(N)}|^2ds,
\end{aligned}\end{equation}
and
\begin{equation*}\begin{aligned}
\mathbb E\sup_{0\leq s\leq t}|\delta x_{-i}|^2\leq L\mathbb E\int_0^t|\delta x_{-i}|^2ds+LN^2\mathbb E\int_0^t|\delta x^{(N)}|^2ds.
\end{aligned}\end{equation*}
Note that $$\delta x^{(N)}=\frac{1}{N}\delta x_i+\frac{1}{N}\delta x_{-i},$$
we have
\begin{equation*}\begin{aligned}
\mathbb E\sup_{0\leq s\leq t}|\delta x_i|^2\leq L+L\mathbb E\int_0^t|\delta x_i|^2ds+\frac{L}{N^2}\mathbb E\int_0^t|\delta x_{-i}|^2ds,
\end{aligned}\end{equation*}
and
\begin{equation*}\begin{aligned}
\mathbb E\sup_{0\leq s\leq t}|\delta x_{-i}|^2\leq L\mathbb E\int_0^t|\delta x_{-i}|^2ds+L\mathbb E\int_0^t|\delta x_i|^2ds.
\end{aligned}\end{equation*}
Therefore, it follows from Gronwall inequality that
\begin{equation*}\begin{aligned}
\mathbb E\sup_{0\leq s\leq t}|\delta x_i|^2+\mathbb E\sup_{0\leq s\leq t}|\delta x_{-i}|^2\leq L.
\end{aligned}\end{equation*}
Thus,
\begin{equation*}
\mathbb E\sup_{0\leq s\leq t}|\delta x^{(N)}|^2\leq\frac{L}{N^2}.
\end{equation*}
From \eqref{variation-j}, by Gronwall inequality again,
we have
\begin{equation*}
\sup_{1\leq j\leq N,j\neq i}\mathbb E\sup_{0\leq s\leq t}|\delta x_j|^2\leq\frac{L}{N^2}.
\end{equation*}
\end{proof}

\begin{lemma}\label{le5.2}
There exists a constant $L_4$ independent of $N$ such that
\begin{equation}\nonumber
\sup_{0\leq t\leq T}\mathbb E|\widetilde x^{(N)}(t)-\mathbb E\alpha|^2\leq \frac{L_4}{N}.
\end{equation}
\end{lemma}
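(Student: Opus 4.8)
The plan is to compare $\widetilde x^{(N)}$ with the average of the \emph{frozen} states $\widetilde l_j$ of \eqref{optimal state-frozen}. Writing $\widehat x^{(N)}:=\frac1N\sum_{j=1}^N\widetilde l_j$, I would split
\begin{equation*}
\widetilde x^{(N)}(t)-\mathbb E\alpha(t)=\big(\widetilde x^{(N)}(t)-\widehat x^{(N)}(t)\big)+\big(\widehat x^{(N)}(t)-\mathbb E\alpha(t)\big),
\end{equation*}
handle the second bracket by a law-of-large-numbers (variance) estimate, and the first by a coupled Gronwall argument for the difference system $e_j:=\widetilde x_j-\widetilde l_j$.

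The first real step is to establish that $\{\widetilde l_j\}_{j=1}^N$ are i.i.d. and each has the law of $\alpha$. Indeed, inspecting \eqref{optimal state-frozen} and the forward-backward system defining $(p_j,q_j)$, once the CC solution $(\alpha,\check y_1,\check\beta_1,\check y_2^\theta)$ of \eqref{CC} is fixed, all the data driving $(\widetilde l_j,p_j,q_j)$ other than $(\Theta_j,W_j)$ — namely $\mathbb E\alpha$, $\int_{\mathcal S}\check y_2^\theta d\Phi(\theta)$, $\mathbb E\check y_1$, $\mathbb E\check\beta_1$ — are deterministic functions of $t$. Hence this per-agent FBSDE is a faithful copy of the generic system obeyed by $(\alpha,\gamma,\vartheta)$ in \eqref{CC} with $(\Theta,W)$ replaced by $(\Theta_j,W_j)$; by the well-posedness of \eqref{CC} proved in Section \ref{well-posedness of CC} this system has a unique solution, so $\widetilde l_j\stackrel{d}{=}\alpha$, and since $\{(\Theta_j,W_j)\}_j$ are i.i.d. and $\widetilde l_j$ is a measurable functional of $(\Theta_j,W_j)$ alone (here one uses that $\mathcal G^j_t$ is a sub-filtration of the one generated by $(\Theta_j,W_j)$), the $\widetilde l_j$ are i.i.d. In particular $\mathbb E\widetilde l_j(t)=\mathbb E\alpha(t)$, so by independence and Lemma \ref{5.1},
\begin{equation*}
\mathbb E\big|\widehat x^{(N)}(t)-\mathbb E\alpha(t)\big|^2=\frac1{N^2}\sum_{j=1}^N\mathbb E\big|\widetilde l_j(t)-\mathbb E\alpha(t)\big|^2\leq\frac{L}{N},\qquad t\in[0,T].
\end{equation*}

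Next, subtracting \eqref{optimal state-frozen} from \eqref{optimal state-realized} the control terms $B\varphi_{\Theta_j}(p_j,q_j)$ and $D_{\Theta_j}\varphi_{\Theta_j}(p_j,q_j)$ cancel and $e_j$ solves the linear SDE
\begin{equation*}
de_j=\big[A_{\Theta_j}e_j+F(\widetilde x^{(N)}-\mathbb E\alpha)\big]dt+\big[Ce_j+\widetilde F(\widetilde x^{(N)}-\mathbb E\alpha)\big]dW_j,\qquad e_j(0)=0.
\end{equation*}
With $h(t):=\sup_{1\leq j\leq N}\mathbb E\sup_{0\leq s\leq t}|e_j(s)|^2$, Burkholder--Davis--Gundy, boundedness of $A_\theta,C,F,\widetilde F$ and Cauchy--Schwarz give $h(t)\leq L\int_0^t h(s)\,ds+L\int_0^t\mathbb E|\widetilde x^{(N)}(s)-\mathbb E\alpha(s)|^2\,ds$. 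Averaging the SDE, the martingale part of $\bar e^{(N)}:=\frac1N\sum_j e_j=\widetilde x^{(N)}-\widehat x^{(N)}$ is $\int_0^{\cdot}\frac1N\sum_j[Ce_j+\widetilde F(\widetilde x^{(N)}-\mathbb E\alpha)]\,dW_j$, whose second moment equals $\frac1{N^2}\sum_j\mathbb E\int_0^{\cdot}|Ce_j+\widetilde F(\widetilde x^{(N)}-\mathbb E\alpha)|^2$ by independence of the $W_j$, hence is $O(1/N)$; combining this with the drift estimate, with $\mathbb E|\widetilde x^{(N)}-\mathbb E\alpha|^2\leq 2\mathbb E|\bar e^{(N)}|^2+2\mathbb E|\widehat x^{(N)}-\mathbb E\alpha|^2$, and with the previous bound yields $\mathbb E\sup_{s\leq t}|\bar e^{(N)}(s)|^2\leq L\int_0^t\big(h(s)+\mathbb E\sup_{u\leq s}|\bar e^{(N)}(u)|^2\big)ds+\frac{L}{N}$. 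Adding the two inequalities and applying Gronwall's lemma to $G(t):=h(t)+\mathbb E\sup_{s\leq t}|\bar e^{(N)}(s)|^2$ gives $G(t)\leq L/N$ on $[0,T]$; then
\begin{equation*}
\mathbb E\big|\widetilde x^{(N)}(t)-\mathbb E\alpha(t)\big|^2\leq 2\mathbb E\big|\bar e^{(N)}(t)\big|^2+2\mathbb E\big|\widehat x^{(N)}(t)-\mathbb E\alpha(t)\big|^2\leq\frac{L_4}{N},
\end{equation*}
and taking the supremum over $t\in[0,T]$ finishes the proof. The main obstacle is the distributional identification in the second step: one must argue carefully that, with the CC solution frozen, the per-agent FBSDE in \eqref{optimal state-frozen} is a genuine i.i.d. replica of the generic one, which requires both uniqueness for \eqref{CC} and the fact that $\mathcal G^j_t$ keeps $\widetilde l_j$ a functional of $(\Theta_j,W_j)$ alone. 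The rest — the coupled Gronwall estimate and the extraction of the $O(1/N)$ rate from the $\frac1N\sum$ of independent $W_j$-martingales — is routine and parallels Lemma \ref{le5.3} and \cite[Lemma 5.1]{HHN2018}.
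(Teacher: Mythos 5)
Your proof is correct, but it is organized differently from the paper's. The paper never works with the average $\frac1N\sum_j\widetilde l_j$ of the frozen states directly; instead it conditions on the diversity index: it introduces, for each $\theta\in\mathcal S$, auxiliary realized and frozen systems $\widetilde x_{\theta,j}$ and $\widetilde l_{\theta,j}$ together with $\alpha_\theta$ (the CC solution with $\Theta\equiv\theta$), splits $\mathbb E|\widetilde x^{(N)}-\mathbb E\alpha|^2$ into the pieces $\widetilde x_j-\widetilde l_j$, $\widetilde l_j-\int_{\mathcal S}\widetilde l_{\theta,j}\,d\Phi(\theta)$ (whose cross terms vanish because $\mathbb E(\widetilde l_j-\int_{\mathcal S}\widetilde l_{\theta,j}\,d\Phi(\theta))=0$), $\int_{\mathcal S}(\widetilde l_{\theta,j}-\widetilde x_{\theta,j})\,d\Phi(\theta)$, and $\int_{\mathcal S}|\frac1N\sum_j\widetilde x_{\theta,j}-\mathbb E[\alpha|\Theta=\theta]|^2\,d\Phi(\theta)$, and then closes the last term by invoking the homogeneous-case law of large numbers of \cite[Lemma 6.3]{QHX} for each fixed $\theta$ before integrating in $\theta$ and applying Gronwall. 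You bypass the per-$\theta$ conditioning entirely by observing that, once the CC solution is frozen, the only non-deterministic inputs to the FBSDE producing $(\widetilde l_j,p_j,q_j)$ are $(\Theta_j,W_j)$, so the $\widetilde l_j$ are genuine i.i.d. copies of $\alpha$ and the law-of-large-numbers step is a one-line variance computation from Lemma \ref{5.1}; the remaining work is the same coupled Gronwall estimate on $e_j=\widetilde x_j-\widetilde l_j$ and on $\bar e^{(N)}$, with the $O(1/N)$ coming from the orthogonality of the $W_j$-martingales. Your route is more self-contained (no appeal to the external homogeneous lemma, no auxiliary $\theta$-indexed systems), and you correctly flag the one point that genuinely needs care, namely that the distributional identification $\widetilde l_j\stackrel{d}{=}\alpha$ rests on uniqueness for the frozen per-agent FBSDE and on $\mathcal G^j_t$ being generated by $(\Theta_j,W_j)$ alone; the paper's conditional decomposition implicitly uses the same identification (e.g.\ in its mean-zero cross-term step), but its $\theta$-indexed systems are reused in Lemma \ref{le5.4}, which is the main thing the longer route buys.
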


\begin{proof}
First, for any $\theta\in\mathcal S$, let
\begin{equation*}\left\{\begin{aligned}
&d\widetilde x_{\theta,j}=[A_{\theta}\widetilde x_{\theta,j}+B\varphi_{\theta}( p_j, q_j)+F\widetilde x_{\theta}^{(N)}]dt+[C
\widetilde x_{\theta,j}+D_{\theta}\varphi_{\theta}( p_j, q_j)+\widetilde F\widetilde x_{\theta}^{(N)}]dW_j(t),\\
&\widetilde x_{\theta,j}(0)=\xi,
\end{aligned}\right.\end{equation*}
\begin{equation*}\left\{\begin{aligned}
&d\widetilde l_{\theta,j}=[A_{\theta}\widetilde l_{\theta,j}+B\varphi_{\theta}( p_j, q_j)+F\mathbb E\alpha_{\theta}]dt+[C
\widetilde l_{\theta,j}+D_{\theta}\varphi_{\theta}( p_j, q_j)+\widetilde F\mathbb E\alpha_{\theta}]dW_j(t),\\
&\widetilde l_{\theta,j}(0)=\xi,
\end{aligned}\right.\end{equation*}
where $\widetilde x_\theta^{(N)}=\frac{1}{N}\sum_{j=1}^N\widetilde x_{\theta,j}$ and $\alpha_\theta$ is the solution of \eqref{CC} corresponding to $\Theta\equiv\theta$.
By Cauchy-Schwartz inequality and Burkholder-Davis-Gundy inequality, we have
\begin{equation*}\begin{aligned}
\mathbb E\sup_{0\leq s\leq t}|\widetilde x_{\theta,j}(s)-\widetilde l_{\theta,j}(s)|^2\leq
L\mathbb E\int_0^t[|\widetilde x_{\theta,j}(s)-\widetilde l_{\theta,j}(s)|^2+|\widetilde x_\theta^{(N)}(s)-\mathbb E\alpha_{\theta}(s)|^2]ds.
\end{aligned}\end{equation*}
By Gronwall inequality, we have
\begin{equation}\label{new-1}\begin{aligned}
\mathbb E\sup_{0\leq s\leq t}|\widetilde x_{\theta,j}(s)-\widetilde l_{\theta,j}(s)|^2\leq
L\mathbb E\int_0^t|\widetilde x_{\theta}^{(N)}(s)-\mathbb E\alpha_{\theta}(s)|^2ds.
\end{aligned}\end{equation}
Next, recalling the state equations \eqref{optimal state-realized} and \eqref{optimal state-frozen}, similarly we have
\begin{equation}\label{new-2}\begin{aligned}
\mathbb E\sup_{0\leq s\leq t}|\widetilde x_{j}(s)-\widetilde l_{j}(s)|^2\leq
L\mathbb E\int_0^t|\widetilde x^{(N)}(s)-\mathbb E\alpha(s)|^2ds.
\end{aligned}\end{equation}
Note that for any $t\in[0,T]$,
\begin{equation}\label{new-3}\begin{aligned}
&\mathbb E|\widetilde x^{(N)}(t)-\mathbb E\alpha(t)|^2\\
\leq&2\mathbb E|\frac{1}{N}\sum_{j=1}^N\widetilde x_j(t)-\frac{1}{N}\sum_{j=1}^N\int_{\mathcal S}\widetilde x_{\theta,j}(t)d\Phi(\theta)|^2\\
&+2\mathbb E|\frac{1}{N}\sum_{j=1}^N\int_{\mathcal S}\widetilde x_{\theta,j}(t)d\Phi(\theta)-\int_{\mathcal S}\mathbb E[\alpha(t)|\Theta=\theta]d\Phi(\theta)|^2\\
\leq&\frac{6}{N}\sum_{j=1}^N\mathbb E|\widetilde x_j(t)-\widetilde l_j(t)|^2+\frac{6}{N^2}\sum_{j=1}^N\mathbb E|\widetilde l_j(t)-\int_{\mathcal S}\widetilde l_{\theta,j}(t)d\Phi(\theta)|^2\\
&+\frac{12}{N^2}\sum_{1\leq j\neq k\leq N}\langle\mathbb E(\widetilde l_j(t)-\int_{\mathcal S}\widetilde l_{\theta,j}(t)d\Phi(\theta)),\mathbb E(\widetilde l_k(t)-\int_{\mathcal S}\widetilde l_{\theta,k}(t)d\Phi(\theta))\rangle\\
&+6\mathbb E|\frac{1}{N}\sum_{j=1}^N\int_{\mathcal S}\widetilde l_{\theta,j}(t)d\Phi(\theta)-\frac{1}{N}\sum_{j=1}^N\int_{\mathcal S}\widetilde x_{\theta,j}(t)d\Phi(\theta)|^2\\
&+2\int_{\mathcal S}\mathbb E|\frac{1}{N}\sum_{j=1}^N\widetilde x_{\theta,j}(t)d\Phi(\theta)-\mathbb E[\alpha(t)|\Theta=\theta]|^2 d\Phi(\theta).\\
\end{aligned}\end{equation}
Similar to Lemma \ref{5.1}, there exists a constant $L$ such that
$$\sup_{\theta\in\mathcal S}\sup_{1\leq j\leq N}\mathbb E\sup_{0\leq t\leq T}|\widetilde x_{\theta,j}(t)|^2\leq L.$$
Consequently,
\begin{equation}\label{new-4}
\frac{6}{N^2}\sum_{j=1}^N\mathbb E|\widetilde l_j(t)-\int_{\mathcal S}\widetilde l_{\theta,j}(t)d\Phi(\theta)|^2\leq\frac{L}{N}.
\end{equation}
From $\mathbb E\alpha=\int_{\mathcal S}E\alpha_\theta d\Phi(\theta)$ and $\mathbb E(A_{\Theta_j}\widetilde l_j)=\int_{\mathcal S}\mathbb E(A_\theta\widetilde l_{\theta,j})d\Phi(\theta)$, we have
\begin{equation}\label{new-5}
\mathbb E(\widetilde l_j(t)-\int_{\mathcal S}\widetilde l_{\theta,j}(t)d\Phi(\theta))=0.
\end{equation}
It is easy to see that
\begin{equation}\label{new-6}\begin{aligned}
&\mathbb E|\frac{1}{N}\sum_{j=1}^N\int_{\mathcal S}\widetilde l_{\theta,j}(t)d\Phi(\theta)-\frac{1}{N}\sum_{j=1}^N\int_{\mathcal S}\widetilde x_{\theta,j}(t)d\Phi(\theta)|^2\\
=&\mathbb E|\frac{1}{N}\sum_{j=1}^N\int_{\mathcal S}(\widetilde l_{\theta,j}(t)-\widetilde x_{\theta,j}(t))d\Phi(\theta)|^2\\
\leq&\frac{1}{N}\sum_{j=1}^N\int_{\mathcal S}\mathbb E|\widetilde l_{\theta,j}(t)-\widetilde x_{\theta,j}(t)|^2d\Phi(\theta).
\end{aligned}\end{equation}
Substituting \eqref{new-1}, \eqref{new-2}, \eqref{new-4}, \eqref{new-5}, and \eqref{new-6} into \eqref{new-3}, we have
\begin{equation*}\begin{aligned}
&\mathbb E|\widetilde x^{(N)}(t)-\mathbb E\alpha(t)|^2\\
\leq&L\mathbb E\int_0^t|\widetilde x^{(N)}(s)-\mathbb E\alpha(s)|^2ds+\frac{L}{N}+\frac{L}{N}\sum_{j=1}^N\int_{\mathcal S}\mathbb E\int_0^t|\widetilde x_{\theta}^{(N)}(s)-\mathbb E\alpha_{\theta}(s)|^2ds d\Phi(\theta)\\
&+2\int_{\mathcal S}\mathbb E|\frac{1}{N}\sum_{j=1}^N\widetilde x_{\theta,j}(t)-\mathbb E[\alpha(t)|\Theta=\theta]|^2 d\Phi(\theta).
\end{aligned}\end{equation*}
Applying similar method as homogeneous case (e.g. \cite[Lemma 6.3]{QHX}), we have
\begin{equation*}
\mathbb E|\frac{1}{N}\sum_{j=1}^N\widetilde x_{\theta,j}(t)-\mathbb E[\alpha(t)|\Theta=\theta]|^2\leq \frac{L}{N},
\end{equation*}
and
\begin{equation*}
\mathbb E\int_0^t|\widetilde x_{\theta}^{(N)}(s)-\mathbb E\alpha_{\theta}(s)|^2ds\leq\frac{L}{N}.
\end{equation*}
Therefore, there exists a constant $L$ independent of $t$ such that
\begin{equation*}\begin{aligned}
\mathbb E|\widetilde x^{(N)}(t)-\mathbb E\alpha|^2\leq&L\mathbb E\int_0^t|\widetilde x^{(N)}(s)-\mathbb E\alpha(s)|^2ds+\frac{L}{N}.
\end{aligned}\end{equation*}
By Gronwall inequality, we have
\begin{equation*}\begin{aligned}
\mathbb E|\widetilde x^{(N)}(t)-\mathbb E\alpha|^2\leq&\frac{L}{N}e^{Lt}.
\end{aligned}\end{equation*}
\end{proof}

\begin{lemma}\label{le5.4}
There exist constants $L_{5},L_{6}$ independent of $N$ such that
\begin{equation}\label{estimation-6}
\sup_{0\leq t\leq T}\mathbb E|x^{**}-\delta x_{-i}|^2\leq \frac{L_{5}}{N},
\end{equation}
and for $j\neq i$,
\begin{equation}\label{estimation-7}
\mathbb E\sup_{0\leq t\leq T}|N\delta x_j-x^*_j|^2\leq \frac{L_{6}}{N}.
\end{equation}
\end{lemma}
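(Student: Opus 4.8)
\textbf{Proof proposal for Lemma \ref{le5.4}.}

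The plan is to derive both estimates by setting up SDEs for the error processes $\rho^{**}_\theta := x^{**}_\theta - \delta x_{(\theta)}$-type quantities and $\eta_j := N\delta x_j - x^*_j$, and then closing a Gronwall-type argument using the $O(N^{-1})$-rates already recorded in Lemma \ref{le5.3} and Lemma \ref{le5.2}. First I would recall the dynamics: $\delta x_j$ solves \eqref{minor-j}, its aggregate $\delta x_{-i}$ solves \eqref{deltax-i}, while the limiting processes $x^*_j$, $x^{**}_\theta$ solve the coupled system \eqref{limit process of variation}. The key structural observation is that \eqref{limit process of variation} is exactly the ``heuristic mean-field limit'' of the rescaled variational system: $N\delta x_j$ satisfies an SDE driven by $W_j$ with drift/diffusion coefficients $A_{\Theta_j}(N\delta x_j) + F(\delta x_i) + F(N\delta x^{(N)})$, and $N\delta x^{(N)} = \delta x_i/N + (1/N)\sum_{j\neq i}(N\delta x_j)/(N-1)\cdot(N-1)/N$, i.e. $N\delta x^{(N)} \approx \delta x_{-i}$ up to $O(1/N)$, which in turn should be close to $x^{**}=\int_{\mathcal S}x^{**}_\theta\,d\Phi(\theta)$.

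For \eqref{estimation-7}, I would write the SDE satisfied by $\eta_j = N\delta x_j - x^*_j$; by subtracting \eqref{minor-j} (rescaled by $N$) from the first line of \eqref{limit process of variation}, the driving terms that do not cancel are precisely $F(N\delta x^{(N)} - \delta x_i - x^{**})$ and $\widetilde F(N\delta x^{(N)} - \delta x_i - x^{**})$ in drift and diffusion respectively (the $\delta x_i$ terms are identical). Since $N\delta x^{(N)} = \frac1N\delta x_i + \frac1N\delta x_{-i}$, and $\|\delta x_{-i} - x^{**}\|$ will be controlled by \eqref{estimation-6}, a standard BDG + Cauchy-Schwarz estimate followed by Gronwall gives $\mathbb E\sup_{t}|\eta_j|^2 \le L\big(\mathbb E\sup_t |x^{**}-\delta x_{-i}|^2 + \frac{1}{N}\|\delta x_i\|^2\big)$; using Lemma \ref{le5.3} (which bounds $\|\delta x_i\|_{L^2}$ by a constant) and \eqref{estimation-6} closes it at rate $L_6/N$. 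So \eqref{estimation-7} is essentially a corollary of \eqref{estimation-6}, and the real work is \eqref{estimation-6}.

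For \eqref{estimation-6} the plan is: introduce, for each $\theta\in\mathcal S$, a ``frozen-index'' rescaled variation $\zeta_{\theta,j}$ analogous to $N\delta x_j$ but with $A_{\Theta_j},\widetilde F$ replaced by the $\theta$-indexed dynamics (the same device as in the proof of Lemma \ref{le5.2} with $\widetilde x_{\theta,j}$); then $\delta x_{-i} = \sum_{j\neq i}\delta x_j$ should be compared to $\frac{1}{N}\sum_{j\neq i}\zeta_{\Theta_j,j}$, which by the i.i.d. structure of $\{\Theta_j\}$ and a conditional law-of-large-numbers / de Finetti argument (the same as invoked after \eqref{variation-conti-1}) concentrates around $\int_{\mathcal S}x^{**}_\theta\,d\Phi(\theta) = x^{**}$. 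Concretely I expect the chain: (i) $\|\delta x_{-i} - \frac1N\sum_{j\neq i} (N\delta x_j)\|$ is $O(1/N)$ trivially since $\delta x_{-i} = \frac1N\sum(N\delta x_j)\cdot\frac{N}{N-1}$-type bookkeeping; (ii) $\mathbb E|\frac1N\sum_{j\neq i}\zeta_{\Theta_j,j} - x^{**}|^2 = O(1/N)$ by the variance computation for a sum of conditionally-i.i.d. terms given the tail $\sigma$-field (degenerate here because $x^{**}_\theta$ depends only on $\delta x_i$, which is common); (iii) the SDE for $N\delta x_j - \zeta_{\Theta_j,j}$ again only sees the discrepancy $N\delta x^{(N)} - \delta x_i - x^{**}$, giving a self-referential Gronwall inequality $\sup_t\mathbb E|\delta x_{-i}-x^{**}|^2 \le L\int_0^t \sup_s\mathbb E|\delta x_{-i}-x^{**}|^2\,ds + L/N$, hence the claim.

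The main obstacle is step (ii)–(iii): making the conditional law-of-large-numbers rigorous with an explicit $N^{-1}$ rate, since the ``tail $\sigma$-algebra'' governing the exchangeable family $\{\zeta_{\Theta_j,j}\}_{j\neq i}$ is itself defined through $\delta x_i$, which depends on the perturbation $\delta u_i$ and is only $L^2$-bounded (not uniformly bounded), and because $\delta x_{-i}$ feeds back into $\delta x^{(N)}$ which appears in every $\delta x_j$. One must be careful to decouple the estimate: I would first prove an a priori bound (Lemma \ref{le5.3} already gives $\mathbb E\sup_t|\delta x_{-i}|^2 \le L$ and $\mathbb E\sup_t|\delta x^{(N)}|^2\le L/N^2$), then run the concentration/Gronwall argument treating $\delta x_i$ as a fixed $L^2$-input, exactly mirroring the structure of the proof of Lemma \ref{le5.2} (where $\widetilde x_{\theta,j}$, $\widetilde l_{\theta,j}$ play the analogous roles). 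I expect no genuinely new idea beyond that lemma, so the proof should parallel it closely, with ``$\widetilde x^{(N)} - \mathbb E\alpha$'' replaced by ``$\delta x_{-i} - x^{**}$'' and ``$\widetilde x_j - \widetilde l_j$'' by ``$N\delta x_j - x^*_j$''.
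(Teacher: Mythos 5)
Your proposal follows essentially the same route as the paper: prove \eqref{estimation-6} first by introducing $\theta$-frozen auxiliary copies of the variational processes, using the vanishing of cross-terms (conditional LLN/variance computation for the mean-matched family indexed by the i.i.d.\ $\Theta_j$) to get the $O(1/N)$ rate, and closing with a self-referential Gronwall inequality exactly as in Lemma \ref{le5.2}; then \eqref{estimation-7} follows by writing the SDE for $x_j^*-N\delta x_j$, whose only source terms are $F(x^{**}-\delta x_{-i})$ and $\widetilde F(x^{**}-\delta x_{-i})$. The argument is correct (note only the harmless slip $N\delta x^{(N)}=\delta x_i+\delta x_{-i}$, not $\frac1N\delta x_i+\frac1N\delta x_{-i}$, which does not affect the cancellation you rely on).
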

\begin{proof}
Introduce the following equations
\begin{equation*}\left\{\begin{aligned}
&d\delta\check x_i=[A_{\Theta_i}\delta\check x_i+B\delta u_i+\frac{F}{N}\delta x_{i}+\frac{F}{N}x^{**}]dt+[C\delta\check x_i+D_{\Theta_i}\delta u_i+\frac{\widetilde F}{N}\delta x_{i}+\frac{\widetilde F}{N}x^{**}]dW_i,\\
&j \neq i, \quad d\delta\check x_j=[A_{\Theta_j}\delta\check x_j+\frac{F}{N}\delta x_{i}+\frac{F}{N}x^{**}]dt+[C\delta\check x_j+\frac{\widetilde F}{N}\delta x_{i}+\frac{\widetilde F}{N}x^{**}]dW_j,\\
&\delta\check x_i(0)=0,\delta\check x_j(0)=0.
\end{aligned}\right.\end{equation*}
Recalling \eqref{minor-j}, by Cauchy-Schwartz inequality, Burkholder-Davis-Gundy inequality,
and Gronwall inequality, we have
\begin{equation}\label{new-8}\begin{aligned}
\mathbb E\sup_{0\leq s\leq t}|\delta x_{j}(s)-\delta\check x_{j}(s)|^2\leq
\frac{L}{N^2}\mathbb E\int_0^t|\delta x_{-i}(s)-x^{**}(s)|^2ds.
\end{aligned}\end{equation}
For any $\theta\in\mathcal S$, let
\begin{equation*}\left\{\begin{aligned}
&d\delta x_{\theta,i}=[A_{\theta}\delta x_{\theta,i}+B\delta u_i+F\delta x_\theta^{(N)}]dt+[C\delta x_{\theta,i}+D_{\theta}\delta u_i+\widetilde F\delta x_\theta^{(N)}]dW_i,\delta x_{\theta,i}(0)=0,\\
&j \neq i, \quad d\delta x_{\theta,j}=[A_{\theta}\delta x_{\theta,j}+F\delta x_\theta^{(N)}]dt+[C\delta x_{\theta,j}+\widetilde F\delta x_\theta^{(N)}]dW_j,
\delta x_{\theta,j}(0)=0,
\end{aligned}\right.\end{equation*}
and
\begin{equation*}\left\{\begin{aligned}
&d\delta\check x_{\theta,i}=[A_{\theta}\delta\check x_{\theta,i}+B\delta u_i+\frac{F}{N}\delta x_{\theta,i}+\frac{F}{N}x_\theta^{**}]dt+[C\delta x_{\theta,i}+D_{\theta}\delta u_i+\frac{\widetilde F}{N}\delta x_{\theta,i}+\frac{\widetilde F}{N}x_\theta^{**}]dW_i,\\
&  d\delta\check x_{\theta,j}=[A_{\theta}\delta\check x_{\theta,j}+\frac{F}{N}\delta x_{\theta,i}+\frac{F}{N}x_\theta^{**}]dt+[C\delta\check x_{\theta,j}+\frac{\widetilde F}{N}\delta x_{\theta,i}+\frac{\widetilde F}{N}x_\theta^{**}]dW_j,\\
&\delta\check x_{\theta,i}(0)=0,\ \delta\check x_{\theta,j}(0)=0,\ j \neq i,
\end{aligned}\right.\end{equation*}
where $\delta x_\theta^{(N)}=\frac{1}{N}\sum_{j=1}^N\delta x_{\theta,j}$.
Similarly,
\begin{equation}\label{new-9}\begin{aligned}
\mathbb E\sup_{0\leq s\leq t}|\delta x_{\theta,j}(s)-\delta\check x_{\theta,j}(s)|^2\leq
\frac{L}{N^2}\mathbb E\int_0^t|\sum_{j\neq i}\delta x_{\theta,j}(s)-x_\theta^{**}(s)|^2ds.
\end{aligned}\end{equation}
For any $t\in[0,T]$,
\begin{equation}\label{new-10}\begin{aligned}
&\mathbb E|x^{**}(t)-\delta x_{-i}(t)|^2\\
\leq&6(N-1)\sum_{j\neq i}\mathbb E|\delta x_{j}-\delta\check x_{j}|^2+6\sum_{j\neq i}\mathbb E|\delta\check x_{j}-\int_\mathcal{S} \delta \check x_{\theta,j}d\Phi(\theta)|^2\\
&+12\sum_{1\leq j\neq k\leq N,j,k\neq i}\mathbb E\langle\delta\check x_{j}-\int_\mathcal{S} \delta \check x_{\theta,j}d\Phi(\theta),\mathbb \delta\check x_{k}-\int_\mathcal{S} \delta \check x_{\theta,k}d\Phi(\theta)\rangle\\
&+
6(N-1)\sum_{j\neq i}\int_\mathcal{S}\mathbb E| \delta \check x_{\theta,j}-\delta x_{\theta,j}|^2 d\Phi(\theta)+2\int_\mathcal{S}\mathbb E| \sum_{j\neq i}\delta x_{\theta,j}- x_\theta^{**}|^2 d\Phi(\theta).
\end{aligned}\end{equation}
Similar to Lemma \ref{le5.2}, we have
\begin{equation*}\begin{aligned}
&\mathbb E|x^{**}(t)-\delta x_{-i}(t)|^2\\
\leq&L\mathbb E\int_0^t|\delta x_{-i}(s)-x^{**}(s)|^2ds+\frac{L}{N}+
L\int_\mathcal{S}\mathbb E\int_0^t|\sum_{j\neq i}\delta x_{\theta,j}(s)-x_\theta^{**}(s)|^2ds d\Phi(\theta)\\
&+2\int_\mathcal{S}\mathbb E| \sum_{j\neq i}\delta x_{\theta,j}- x_\theta^{**}|^2 d\Phi(\theta).
\end{aligned}\end{equation*}
Applying similar technique as homogeneous case (e.g., pp. 29 in \cite{QHX}), we have
$$\mathbb E\sup_{0\leq s\leq t}|\sum_{j\neq i}\delta x_{\theta,j}(s)-x_\theta^{**}|^2(s)\leq\frac{L}{N}.$$
Therefore, there exists a constant $L$ independent of $t$ such that
\begin{equation*}\begin{aligned}
\mathbb E|x^{**}(t)-\delta x_{-i}(t)|^2
\leq L\mathbb E\int_0^t|\delta x_{-i}(s)-x^{**}(s)|^2ds+\frac{L}{N}.
\end{aligned}\end{equation*}
By Gronwall inequality, we have
\begin{equation*}\begin{aligned}
\mathbb E|x^{**}(t)-\delta x_{-i}(t)|^2
\leq \frac{L}{N}e^{Lt}.
\end{aligned}\end{equation*}
Hence \eqref{estimation-6} follows. Note that
\begin{equation*}\left\{\begin{aligned}
d(x_j^{*}-N\delta x_{j})=&[A_{\Theta_j}(x_j^{*}-N\delta x_j)+F(x^{**}-\delta x_{-i})]dt\\
&+[C(x_j^{*}-N\delta x_j)+\widetilde F(x^{**}-\delta x_{-i})]dW_j,\\
(x_j^{*}-\delta x_{j})(0)=&0.
\end{aligned}\right.\end{equation*}
By \eqref{estimation-6}, we have \eqref{estimation-7}.
\end{proof}

\begin{lemma}\label{le5.5}
There exists a constant $L_{7}$ independent of $N$ such that
\begin{equation}\label{estimation-8}
\sup_{1\leq j\leq N}\mathbb E\sup_{0\leq t\leq T}|\widetilde l_j-\widetilde x_{j}|^2\leq \frac{L_{7}}{N}.
\end{equation}
\end{lemma}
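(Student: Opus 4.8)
The plan is to observe that the difference $\zeta_j := \widetilde l_j - \widetilde x_j$ solves a linear SDE whose only inhomogeneous term is the quantity $\widetilde x^{(N)} - \mathbb E\alpha$, which has already been controlled at rate $O(1/N)$ in Lemma \ref{le5.2}. Hence the estimate will follow from a routine linear-SDE moment bound combined with that lemma, with all constants uniform in $j$.

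First I would subtract \eqref{optimal state-realized} from \eqref{optimal state-frozen}. The key point is that the control terms $B\varphi_{\Theta_j}(p_j,q_j)$ and $D_{\Theta_j}\varphi_{\Theta_j}(p_j,q_j)$ are identical in both equations: they depend only on $(p_j,q_j)$, which in turn is defined through an FBSDE driven by $\mathbb E\alpha$ and not by any empirical average, so these terms cancel. One is left with
\[
d\zeta_j = \big[A_{\Theta_j}\zeta_j + F(\mathbb E\alpha - \widetilde x^{(N)})\big]dt + \big[C\zeta_j + \widetilde F(\mathbb E\alpha - \widetilde x^{(N)})\big]dW_j, \qquad \zeta_j(0)=0.
\]
Applying the Burkholder--Davis--Gundy and Cauchy--Schwarz inequalities together with the uniform $L^\infty$-bounds on $A_\theta, C, F, \widetilde F$ from (A2), then Gronwall's inequality, yields — for a constant $L$ independent of $j$ and $N$ — exactly the estimate \eqref{new-2}, namely $\mathbb E\sup_{0\le s\le t}|\zeta_j(s)|^2 \le L\,\mathbb E\int_0^t |\widetilde x^{(N)}(s) - \mathbb E\alpha(s)|^2\,ds$.

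Finally I would bound $\mathbb E\int_0^T|\widetilde x^{(N)}(s)-\mathbb E\alpha(s)|^2ds = \int_0^T \mathbb E|\widetilde x^{(N)}(s)-\mathbb E\alpha(s)|^2\,ds \le T\sup_{0\le s\le T}\mathbb E|\widetilde x^{(N)}(s)-\mathbb E\alpha|^2 \le TL_4/N$ by Lemma \ref{le5.2}, obtaining $\mathbb E\sup_{0\le t\le T}|\zeta_j|^2 \le TLL_4/N$. Since none of the constants depend on $j$, taking the supremum over $1\le j\le N$ is immediate and gives the claim with $L_7 := TLL_4$. There is essentially no obstacle in this argument: the only genuinely substantive input — the $O(1/N)$ propagation-of-chaos rate for $\widetilde x^{(N)} - \mathbb E\alpha$ — has already been established in Lemma \ref{le5.2}; what remains is the standard linear-SDE estimate plus the cheap passage from $\mathbb E\int_0^t|\cdot|^2$ to $\sup_s \mathbb E|\cdot|^2$, which costs only a factor $T$.
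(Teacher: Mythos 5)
Your proof is correct and is essentially identical to the paper's argument: the paper also writes the linear SDE satisfied by $\widetilde l_j-\widetilde x_j$ (with the control terms cancelling and the inhomogeneity $F(\mathbb E\alpha-\widetilde x^{(N)})$, $\widetilde F(\mathbb E\alpha-\widetilde x^{(N)})$) and then invokes Cauchy--Schwarz, Burkholder--Davis--Gundy, Gronwall and Lemma \ref{le5.2}. You have merely spelled out the final passage from $\sup_t\mathbb E|\widetilde x^{(N)}-\mathbb E\alpha|^2\le L_4/N$ to the claimed bound, which the paper leaves implicit.
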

\begin{proof}
Note that
\begin{equation*}\left\{\begin{aligned}
&d(\widetilde l_j-\widetilde x_j)=[A_{\Theta_j}(\widetilde l_j-\widetilde x_j)+F(\mathbb E\alpha-\widetilde x^{(N)})]dt+[C(\widetilde l_j-
\widetilde x_j)+\widetilde F(\mathbb E\alpha-\widetilde x^{(N)})]dW_j(t),\\
&\widetilde l_j(0)-\widetilde x_j(0)=0.
\end{aligned}\right.\end{equation*}
By Cauchy-Schwartz inequality, Burkholder-Davis-Gundy inequality, Gronwall inequality and Lemma \ref{le5.2}, we have \eqref{estimation-8}.
\end{proof}

\subsection{Asymptotic optimality}

In order to prove asymptotic optimality, it suffices to consider the perturbations $u_{i}\in\mathcal U_i^c$ such that $\mathcal J_{soc}^{(N)}(u_1,\cdots,u_N)\leq\mathcal J_{soc}^{(N)}(\widetilde u_1,\cdots,\widetilde u_N).$ It is easy to check that $$\mathcal J_{soc}^{(N)}(\widetilde u_1,\cdots,\widetilde u_N)\leq LN,$$ where $L$ is a constant independent of $N$. Therefore, in the following we only consider the perturbations $u_i\in\mathcal U_i^{c}$ satisfying
\begin{equation}\nonumber
\sum_{i=1}^N\mathbb E\int_0^T|u_i|^2dt\leq LN.
\end{equation}
Therefore, similar to Lemma \ref{le5.2} and Lemma \ref{le5.5}, we have
\begin{lemma}\label{le5.6}
There exist constants $L_{8}$ and $L_9$ independent of $N$ such that
\begin{equation}\nonumber
\mathbb E\sup_{0\leq t\leq T}|\acute x^{(N)}(t)-\mathbb E\alpha|^2\leq \frac{L_8}{N},
\end{equation}
and
\begin{equation}\nonumber
\sup_{1\leq j\leq N}\mathbb E\sup_{0\leq t\leq T}|\acute l_j-\acute x_{j}|^2\leq \frac{L_{9}}{N}.
\end{equation}
\end{lemma}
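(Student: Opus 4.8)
The plan is to repeat the arguments behind Lemma~\ref{le5.2} and Lemma~\ref{le5.5}, the only new ingredient being that the benchmark agent $\mathcal A_i$ now runs the perturbed control $u_i$ in place of $\varphi_{\Theta_i}(p_i,q_i)$. Since we have already reduced to perturbations obeying $\sum_{k=1}^N\mathbb E\int_0^T|u_k|^2dt\le LN$ (the $N-1$ unperturbed controls being bounded via Lemma~\ref{5.1}), a routine a priori estimate for the weakly coupled system $\{\acute x_j\}$ --- Gronwall applied to $S(t):=\sum_{j=1}^N\mathbb E\sup_{0\le s\le t}|\acute x_j(s)|^2$, using $\mathbb E|\acute x^{(N)}(s)|^2\le\frac1N\sum_j\mathbb E|\acute x_j(s)|^2$ to close the loop --- gives $\sum_{j=1}^N\mathbb E\sup_{0\le t\le T}|\acute x_j(t)|^2\le LN$, and in particular $\mathbb E\sup_{0\le t\le T}|\acute x_i(t)|^2\le LN$, so the single perturbed coordinate contributes only $\mathbb E\sup_{0\le t\le T}|\frac1N\acute x_i(t)|^2\le L/N$ to the empirical average.

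For the first estimate I would follow the proof of Lemma~\ref{le5.2} line by line: introduce the $\theta$-frozen auxiliary states $\acute x_{\theta,j},\acute l_{\theta,j}$ (the analogues of \eqref{optimal state-realized} and \eqref{optimal state-frozen} with $A_{\Theta_j},D_{\Theta_j}$ replaced by $A_\theta,D_\theta$), obtain by Cauchy--Schwarz, Burkholder--Davis--Gundy and Gronwall that $\mathbb E\sup_{s\le t}|\acute x_j(s)-\acute l_j(s)|^2\le L\,\mathbb E\int_0^t|\acute x^{(N)}(s)-\mathbb E\alpha(s)|^2ds$ (and its $\theta$-version), then expand $\mathbb E|\acute x^{(N)}(t)-\mathbb E\alpha(t)|^2$ exactly as in \eqref{new-3}, now peeling off the $j=i$ term --- which is $\le L/N$ by the previous paragraph --- and treating the remaining $N-1$ coordinates just as in Lemma~\ref{le5.2}: within-type exchangeability makes the cross terms vanish because $\mathbb E\big(\acute l_j-\int_{\mathcal S}\acute l_{\theta,j}d\Phi(\theta)\big)=0$, and a conditional law of large numbers (as in \cite[Lemma 6.3]{QHX}) reduces the whole expression to $L/N+L\int_0^t\mathbb E|\acute x^{(N)}-\mathbb E\alpha|^2ds$. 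Gronwall then gives $\sup_t\mathbb E|\acute x^{(N)}(t)-\mathbb E\alpha|^2\le L/N$, and a final Burkholder--Davis--Gundy estimate for the martingale part of $\acute x^{(N)}-\mathbb E\alpha$ (whose quadratic variation has expectation $O(1/N)$, again by the a priori estimate and the reduction bound) upgrades this to $\mathbb E\sup_{0\le t\le T}|\acute x^{(N)}(t)-\mathbb E\alpha|^2\le L_8/N$. Equivalently one may write $\acute x^{(N)}=\widetilde x^{(N)}+\delta x^{(N)}$ and combine a sup-inside version of Lemma~\ref{le5.2} with Lemma~\ref{le5.3}.

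For the second estimate, as in the proof of Lemma~\ref{le5.5}, I would note that for every $j=1,\dots,N$ (including $j=i$, the control terms cancelling) the difference solves
\begin{equation*}\left\{\begin{aligned}
&d(\acute l_j-\acute x_j)=[A_{\Theta_j}(\acute l_j-\acute x_j)+F(\mathbb E\alpha-\acute x^{(N)})]dt+[C(\acute l_j-\acute x_j)+\widetilde F(\mathbb E\alpha-\acute x^{(N)})]dW_j,\\
&(\acute l_j-\acute x_j)(0)=0,
\end{aligned}\right.\end{equation*}
so that Cauchy--Schwarz, Burkholder--Davis--Gundy and Gronwall give $\mathbb E\sup_{0\le t\le T}|\acute l_j-\acute x_j|^2\le L\,\mathbb E\int_0^T|\mathbb E\alpha-\acute x^{(N)}|^2dt\le L_9/N$ uniformly in $j$, by the first estimate.

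The hard part is the first estimate, and inside it the delicate point is that the lone perturbed coordinate $\acute x_i$ really contributes only $O(1/N)$ to the empirical average: this is exactly where the a priori bound $\sum_j\mathbb E\sup_t|\acute x_j(t)|^2=O(N)$ --- hence $\mathbb E\sup_t|\acute x_i(t)|^2=O(N)$ --- coming from the reduction $\sum_k\mathbb E\int_0^T|u_k|^2dt\le LN$ is indispensable, together with the continuum-index conditional law of large numbers handled as in \cite{QHX}. Everything else is the same Gronwall/Burkholder--Davis--Gundy bookkeeping already carried out for Lemmas~\ref{le5.2}--\ref{le5.5}.
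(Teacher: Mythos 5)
Your argument is correct and is exactly the fleshed-out version of what the paper does: the paper offers no separate proof of this lemma, merely invoking the reduction $\sum_{i=1}^N\mathbb E\int_0^T|u_i|^2dt\leq LN$ and the phrase ``similar to Lemma \ref{le5.2} and Lemma \ref{le5.5}''. Your two supplementary observations --- the a priori bound showing the lone perturbed coordinate contributes only $O(1/N)$ to the empirical average, and the Burkholder--Davis--Gundy step reconciling the sup-inside-expectation form of the claim with the sup-outside form proved in Lemma \ref{le5.2} --- are precisely the details the paper leaves implicit, and both are sound.
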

Let $\delta u_i=u_i-\widetilde u_i$,
 and consider a perturbation  ${u} = \widetilde{u} + (\delta u_1,\cdots,\delta u_N):=\widetilde{ u}+ \delta u$. Then by Section \ref{Representation of social cost}, we have
\begin{equation*}
  \begin{aligned}
2\mathcal{J}^{(N)}_{soc}(\widetilde{u} + \delta u) =&  \langle M_2(\widetilde{u} + \delta u),\widetilde{u} + \delta u\rangle + 2\langle M_1,\widetilde{u} + \delta u\rangle  + M_0\\
= &2\mathcal{J}^{(N)}_{soc}(\widetilde{u} )  + 2\sum_{i=1}^{N}\langle M_2(\widetilde{u}) + M_1 , \delta u_i\rangle + \langle M_2 (\delta u), \delta u\rangle, \\
  \end{aligned}
\end{equation*}
where $M_2(\widetilde{u} )+ M_1 $ is the Fr\'{e}chet differential of ${\mathcal J}^{(N)}_{soc}$ on $\widetilde{u}$.
\begin{theorem}\label{asymptotic optimal}
Under the assumptions \emph{(A1)-(A5)}, $\widetilde u=(\widetilde u_1,\cdots,\widetilde u_N)$ defined in \eqref{asymptotic optimal strategy} is a $\Big(\frac{1}{\sqrt{N}}\Big)$-social optimal strategy for the agents.
\end{theorem}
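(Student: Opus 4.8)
The plan is to combine the quadratic representation of the team functional from Section~\ref{Representation of social cost} with the person-by-person variational identity of Section~\ref{Minor agent's perturbation} and the optimality of $\widetilde u_i$ for Problem (AC). Let $\bar{\mathbf u}=(\bar u_1,\cdots,\bar u_N)$ be a centralized social optimum (its existence and uniqueness following from the uniform convexity ensured by (A3)). First I would record the a~priori bounds: $\mathcal J_{soc}^{(N)}(\widetilde{\mathbf u})\le LN$ (as already noted before Lemma~\ref{le5.6}), so by uniform convexity $\sum_{i=1}^N\mathbb E\int_0^T|\bar u_i|^2dt\le LN$ and hence $\sum_{i=1}^N\|\delta u_i\|_{L^2}^2\le LN$ with $\delta u_i=\bar u_i-\widetilde u_i$. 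Writing $2\mathcal J_{soc}^{(N)}(\mathbf u)=\langle M_2\mathbf u,\mathbf u\rangle+2\langle M_1,\mathbf u\rangle+M_0$ and using $M_2\ge0$,
\begin{equation*}
\mathcal J_{soc}^{(N)}(\widetilde{\mathbf u})-\mathcal J_{soc}^{(N)}(\bar{\mathbf u})\le-\langle M_2\widetilde{\mathbf u}+M_1,\delta\mathbf u\rangle=-\sum_{i=1}^N\langle M_2\widetilde{\mathbf u}+M_1,\delta u_i\rangle,
\end{equation*}
where the $i$-th summand is the Fréchet derivative in the direction that perturbs only agent $\mathcal A_i$. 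So it remains to show $\sum_{i=1}^N\langle M_2\widetilde{\mathbf u}+M_1,\delta u_i\rangle\ge -L\sqrt N$.

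Next I would identify each summand with the auxiliary first variation. Exactly as in Section~\ref{Minor agent's perturbation}, the frozen-state substitution ($\widetilde l_j,\acute l_j$) together with Itô's formula applied to $\langle p_i,\delta l_i\rangle$ (where $(p_i,q_i)$ solves the Hamiltonian system \eqref{Hamiltonian system}) gives
\begin{equation*}
\langle M_2\widetilde{\mathbf u}+M_1,\delta u_i\rangle=\mathbb E\int_0^T\Big[\langle Q\widetilde l_i,\delta l_i\rangle-\langle\Xi,\delta l_i\rangle+\langle R\widetilde u_i,\delta u_i\rangle\Big]dt+\sum_{l=1}^{7}\epsilon_l^i=\mathbb E\int_0^T\langle R\widetilde u_i+B^\top p_i+D_{\Theta_i}^\top q_i,\delta u_i\rangle dt+\sum_{l=1}^{7}\epsilon_l^i.
\end{equation*}
The leading term is the first variation of the auxiliary cost $J_i$ at $\widetilde u_i$. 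Since $\widetilde u_i$ is optimal for Problem (AC) and $\Gamma$ is closed convex, the pointwise variational inequality \eqref{SMP}--\eqref{optimal control} holds; evaluating it at $v=\bar u_i(t,\omega)\in\Gamma$ (and using convexity of $\Gamma$, which ensures $\mathbb E[\bar u_i|\mathcal G_t^i]\in\Gamma$ is an admissible decentralized competitor, so that the conditional-expectation part of the gradient pairs nonnegatively against $\delta u_i$) yields $\mathbb E\int_0^T\langle R\widetilde u_i+B^\top p_i+D_{\Theta_i}^\top q_i,\delta u_i\rangle dt\ge0$; in the case $\mathbb G^i=\mathbb F^i$ this is immediate, and in the genuinely partial case it rests on the conditional-expectation identities already used in the maximum-principle derivation.

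It then remains to control $\sum_{i=1}^N\sum_{l=1}^{7}|\epsilon_l^i|$ by $L\sqrt N$. Each $\epsilon_l^i$ is an integral pairing one of $\delta x_i,\delta u_i,x^{**}$ against a difference that is $O(N^{-1/2})$ in $L^2$, uniformly in $i$: $\epsilon_1$ via Lemma~\ref{le5.3} and Lemma~\ref{le5.2}; $\epsilon_2,\epsilon_3$ via Lemma~\ref{le5.4}; $\epsilon_4,\epsilon_5$ via the exchangeability of $\{(y_1^j,\beta_1^{jj})\}_{j\neq i}$ and the (conditional) law of large numbers giving $\mathbb E|\mathbb E[y_1]-\tfrac1N\sum_{j\neq i}y_1^j|^2=O(1/N)$; $\epsilon_6,\epsilon_7$ via Lemma~\ref{le5.5} and Lemma~\ref{le5.6}. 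Combining $\|\delta x_i\|_{L^2}\le L(1+\|\delta u_i\|_{L^2})$ with $\sum_i\|\delta u_i\|_{L^2}^2\le LN$, Cauchy--Schwarz gives $\sum_i|\epsilon_l^i|\le(\sum_ic_i^2)^{1/2}(\sum_i(1+\|\delta u_i\|_{L^2}^2))^{1/2}=O(\sqrt N)$ since $c_i=O(N^{-1/2})$. Hence $\mathcal J_{soc}^{(N)}(\widetilde{\mathbf u})-\mathcal J_{soc}^{(N)}(\bar{\mathbf u})\le L\sqrt N$, i.e.\ $\widetilde{\mathbf u}$ is $\big(1/\sqrt N\big)$-social optimal.

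The hard part will be Step~4: obtaining all seven families of error bounds with the uniform-in-$i$ rate $O(N^{-1/2})$. The delicate estimates are exactly those created by the continuum heterogeneity and the weak-construction duality — the replacements $N\delta x_j\approx x_j^*$ and $\delta x_{-i}\approx x^{**}=\int_{\mathcal S}x_\theta^{**}d\Phi(\theta)$, and the conditional law-of-large-numbers step replacing $\tfrac1N\sum_{j\neq i}y_1^j$ by $\mathbb E[y_1]$ — because here the sub-class exchangeability carries zero mass, so one must instead exploit the product-space structure $\Omega_1\times\Omega_2\to\mathcal S\times\mathbb R^n$ of the limiting consistency system (as in the proofs of Lemmas~\ref{le5.2} and \ref{le5.4}, where the $\theta$-parametrized auxiliary systems $\widetilde x_{\theta,j},\delta x_{\theta,j},x_\theta^{**}$ are introduced and integrated against $d\Phi(\theta)$). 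A secondary subtlety, already visible in Step~3, is the joint handling of the input constraint $\Gamma$ and the partial-information filtration $\mathbb G^i$ in the per-agent optimality inequality.
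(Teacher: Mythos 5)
Your proposal follows essentially the same route as the paper's proof: the quadratic operator representation of $\mathcal J^{(N)}_{soc}$ from Section \ref{Representation of social cost}, identification of the Fr\'{e}chet derivative $\langle M_2(\widetilde u)+M_1,\delta u_i\rangle$ with the auxiliary first variation plus the seven error terms, nonnegativity of the leading term from the optimality of $\widetilde u_i$ for Problem (AC), and aggregation of the errors to $O(\sqrt N)$ via Lemmas \ref{le5.2}--\ref{le5.6} and Cauchy--Schwarz. The only cosmetic differences are that you perturb directly toward the centralized optimum and discard the quadratic remainder using $M_2\ge 0$, whereas the paper retains $\tfrac12\langle M_2(\delta u),\delta u\rangle$ and restricts to perturbations satisfying $\mathcal J^{(N)}_{soc}(u)\le\mathcal J^{(N)}_{soc}(\widetilde u)$, and that you write the first variation explicitly through the Hamiltonian pairing $\langle R\widetilde u_i+B^\top p_i+D_{\Theta_i}^\top q_i,\delta u_i\rangle$ while the paper argues by a directional-derivative contradiction; these are equivalent.
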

\begin{proof}
From Section \ref{Minor agent's perturbation}, we have
\begin{equation*}
\langle M_2(\widetilde u)+M_1,\delta u_i\rangle =\mathbb E\int_0^T\Big[\langle Q\widetilde l_i,\delta l_i\rangle-\langle \Xi,\delta l_i\rangle+\langle R\widetilde u_i,\delta u_i\rangle \Big] dt+\sum_{l=1}^{7}\varepsilon_l.
\end{equation*}
From the optimality of $\widetilde u$, we have
$$\mathbb E\int_0^T\Big[\langle Q\widetilde l_i,\delta l_i\rangle-\langle \Xi,\delta l_i\rangle+\langle R\widetilde u_i,\delta u_i\rangle \Big] dt\geq0.$$
Suppose this is not true, then for $u_i$ such that $\widetilde u_i+u_i\in\mathcal U_i^{d,p}$, we have $$\widetilde u_i+\rho u_i\in\mathcal U_i^{d,p},\qquad 0<\rho<1,$$and $$\lim_{\rho\rightarrow0}\frac{ J_i(\widetilde u_i+\rho u_i,\widetilde u_{-i})- J_i(\widetilde u_i,\widetilde u_{-i})}{\rho}<0.$$
Therefore, $$ J_i(\widetilde u_i+\rho u_i,\widetilde u_{-i})< J_i(\widetilde u_i,\widetilde u_{-i})$$ for sufficiently small $\rho$, which is a contradiction with the optimality of $\widetilde u_i$.
Moreover, combing Lemmas \ref{le5.2}-\ref{le5.6} with iteration analysis (e.g., \cite{HWY2019}), we have 
$$\sum_{l=1}^{7}\varepsilon_l=O\Big(\frac{1}{\sqrt{N}}\Big).$$
Therefore,
\begin{equation*}
  \begin{aligned}
&\mathcal{J}^{(N)}_{soc}(\widetilde{u} + \delta u)\\
=&\mathcal{J}^{(N)}_{soc}(\widetilde{u} )  + \sum_{i=1}^{N}\mathbb E\int_0^T\Big[\langle Q\widetilde l_i,\delta l_i\rangle-\langle \Xi,\delta l_i\rangle+\langle R\widetilde u_i,\delta u_i\rangle \Big] dt+ \sum_{i=1}^{N}\sum_{l=1}^{5}\varepsilon_l+\frac{1}{2} \langle M_2 (\delta u), \delta u\rangle. \\
  \end{aligned}
\end{equation*}
Note that $$\sum_{i=1}^{N}\mathbb E\int_0^T\Big[\langle Q\widetilde l_i,\delta l_i\rangle-\langle \Xi,\delta l_i\rangle+\langle R\widetilde u_i,\delta u_i\rangle \Big] dt+\frac{1}{2} \langle M_2 (\delta u), \delta u\rangle \geq 0,$$and
$$\sum_{i=1}^{N}\sum_{l=1}^{7}\varepsilon_l=O(\sqrt{N}),$$
there exists a constant $L$ independent of $N$ such that
$$\frac{1}{N}\Big(\mathcal J_{soc}^{(N)}
(\widetilde u)-\inf_{u\in\mathcal U_i^c}\mathcal J_{soc}^{(N)}(u)\Big)\leq \frac{L}{\sqrt{N}}.$$
\end{proof}

\begin{appendix}
\section*{}\label{appn} 
First, for any given $(Y,Z)\in L_{\mathbb F}^2(0,T;\mathbb{R}^m)\times L_{\mathbb F}^2(0,T;\mathbb R^{m})$ and $0\leq t\leq T$, the following SDE has a unique solution:
\begin{equation}\label{1}
X(t)=x+\int_0^tb(s,X,\mathbb E[X],Y,\mathcal E_t[Y],Z,\mathcal E_t[Z])ds+\int_0^t\sigma(s,X,\mathbb E[X],Y,\mathcal E_t[Y],Z,\mathcal E_t[Z])d W(s).
\end{equation}
Therefore, we can introduce a map $\mathcal M_1:L^2_{\mathbb F}(0,T;\mathbb R^m)\times L^2_{\mathbb F}(0,T;\mathbb R^{m})\rightarrow L^2_{\mathbb F}(0,T;\mathbb R^n)$.
Moreover, by the standard estimations of SDE, we have the following result:
\begin{lemma}
Let $X_i$ be the solution of \eqref{1} corresponding to $(Y_i,Z_i)$, $i=1,2$ respectively. Then for all $\rho\in\mathbb R$ and some constants $l_1,l_2,l_3,l_4>0$, we have
\begin{equation}\nonumber\begin{aligned}
&\mathbb Ee^{-\rho t}|\hat X(t)|^2+\bar\rho_1\mathbb E\int_0^te^{-\rho s}|\hat X(s)|^2ds\\
\leq&(k_2l_1+k_3l_2+k_{14}^2+k_{15}^2)\mathbb E\int_0^te^{-\rho s}|\hat Y(s)|^2ds\\&+(k_4l_3+k_5l_4+k_{12}^2+k_{16}^2+k_{17}^2)\mathbb E\int_0^te^{-\rho s}|\hat Z(s)|^2ds,
\end{aligned}\end{equation} and
\begin{equation}\nonumber\begin{aligned}
\mathbb Ee^{-\rho t}|\hat X(t)|^2
\leq&(k_2l_1+k_3l_2+k_{14}^2+k_{15}^2)\mathbb E\int_0^te^{-\bar\rho_1(t-s)-\rho s}|\hat Y(s)|^2ds\\
&+(k_4l_3+k_5l_4+k_{12}^2+k_{16}^2+k_{17}^2)\mathbb E\int_0^te^{-\bar\rho_1(t-s)-\rho s}|\hat Z(s)|^2ds,
\end{aligned}\end{equation}
where $\bar\rho_1=\rho-2\rho_1-2k_1-k_2l_1^{-1}-k_3l_2^{-1}-k_4l_3^{-1}-k_5l_4^{-1}-k_{12}^2-k_{13}^2$ and $\hat\Phi:=\Phi_1-\Phi_2$, $\Phi=X,Y,Z$. Moreover,
\begin{equation}\nonumber\begin{aligned}
\mathbb E\int_0^Te^{-\rho t}|\hat X(t)|^2dt
\leq&(k_2l_1+k_3l_2+k_{14}^2+k_{15}^2)\frac{1-e^{-\bar\rho_1T}}{\bar\rho_1}\mathbb E\int_0^Te^{-\rho s}|\hat Y(s)|^2ds\\&
+(k_4l_3+k_5l_4+k_{12}^2+k_{16}^2+k_{17}^2)\frac{1-e^{-\bar\rho_1T}}{\bar\rho_1}\mathbb E\int_0^Te^{-\rho s}|\hat Z(s)|^2ds,
\end{aligned}\end{equation}
and
\begin{equation}\nonumber\begin{aligned}
e^{-\rho T}\mathbb E|\hat X(T)|^2
\leq&(1\vee e^{-\bar\rho_1T})\Big\{(k_2l_1+k_3l_2+k_{14}^2+k_{15}^2)\mathbb E\int_0^Te^{-\rho t}|\hat Y(t)|^2dt\\
&+(k_4l_3+k_5l_4+k_{12}^2+k_{16}^2+k_{17}^2)\mathbb E\int_0^Te^{-\rho t}|\hat Z(t)|^2dt\Big\}.
\end{aligned}\end{equation}
Specially, if $\bar\rho_1>0$,
\begin{equation}\begin{aligned}\nonumber
e^{-\rho T}\mathbb E|\hat X(T)|^2\leq&(k_2l_1+k_3l_2+k_{14}^2+k_{15}^2)\mathbb E\int_0^Te^{-\rho t}|\hat Y(t)|^2dt\\
&+(k_4l_3+k_5l_4+k_{12}^2+k_{16}^2+k_{17}^2)\mathbb E\int_0^Te^{-\rho t}|\hat Z(t)|^2dt.
\end{aligned}\end{equation}
\end{lemma}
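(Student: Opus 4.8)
The plan is to set $\hat X := X_1 - X_2$, $\hat Y := Y_1 - Y_2$, $\hat Z := Z_1 - Z_2$, where $\hat X$ solves the SDE obtained by differencing \eqref{1} with $\hat X(0)=0$, and to run the standard weighted a priori estimate. Writing $\Delta b$ and $\Delta\sigma$ for the differences of the drift and diffusion coefficients evaluated along the two solutions, an application of It\^o's formula to $e^{-\rho t}|\hat X(t)|^2$ and taking expectations — the stochastic integral being a genuine martingale after a routine localization using the $L^2$ bounds on $X_1,X_2$ — gives
\begin{equation*}
\mathbb{E}e^{-\rho t}|\hat X(t)|^2=\mathbb{E}\int_0^t e^{-\rho s}\big[-\rho|\hat X|^2+2\langle\hat X,\Delta b\rangle+|\Delta\sigma|^2\big]\,ds.
\end{equation*}

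The key step is to estimate the integrand. I would split $\Delta b$ into the increment in its first argument alone plus the increment in all the remaining arguments; by the one-sided (monotonicity) bound in (H1) the first piece contributes at most $\rho_1|\hat X|^2$ to $\langle\hat X,\Delta b\rangle$, while the Lipschitz bound in (H1) controls the second piece in terms of $|\mathbb{E}\hat X|,|\hat Y|,|\mathcal{E}_t\hat Y|,|\hat Z|,|\mathcal{E}_t\hat Z|$ with constants $k_1,\dots,k_5$. Applying Young's inequality with weights $l_1,\dots,l_4$ to the resulting cross terms, using the (conditional) Jensen inequalities $\mathbb{E}|\mathbb{E}\hat X|^2\le\mathbb{E}|\hat X|^2$, $\mathbb{E}|\mathcal{E}_t\hat Y|^2\le\mathbb{E}|\hat Y|^2$, $\mathbb{E}|\mathcal{E}_t\hat Z|^2\le\mathbb{E}|\hat Z|^2$ after taking expectations, and bounding $|\Delta\sigma|^2$ by the quadratic Lipschitz estimate in (H1), the coefficient of $\mathbb{E}\int_0^t e^{-\rho s}|\hat X|^2\,ds$ collapses to exactly $-\bar\rho_1$ and the coefficients of the $\hat Y$- and $\hat Z$-integrals become the nonnegative constants $k_2l_1+k_3l_2+k_{14}^2+k_{15}^2$ and $k_4l_3+k_5l_4+k_{12}^2+k_{16}^2+k_{17}^2$ displayed in the statement. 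Rearranging is the first inequality.

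The remaining three are purely analytic consequences. Put $g(t):=\mathbb{E}e^{-\rho t}|\hat X(t)|^2$ and let $h(t):=(k_2l_1+k_3l_2+k_{14}^2+k_{15}^2)\,e^{-\rho t}\mathbb{E}|\hat Y(t)|^2+(k_4l_3+k_5l_4+k_{12}^2+k_{16}^2+k_{17}^2)\,e^{-\rho t}\mathbb{E}|\hat Z(t)|^2$; the It\^o identity above shows $g$ is absolutely continuous with $g'(t)\le-\bar\rho_1 g(t)+h(t)$, so multiplying by $e^{\bar\rho_1 t}$ and integrating from $0$ (where $g(0)=0$) gives $g(t)\le\int_0^t e^{-\bar\rho_1(t-s)}h(s)\,ds$, the second inequality. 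Integrating this over $t\in[0,T]$ and applying Fubini turns the kernel into $\int_s^T e^{-\bar\rho_1(t-s)}\,dt=\frac{1-e^{-\bar\rho_1(T-s)}}{\bar\rho_1}$, which one checks (cases $\bar\rho_1>0$, $\bar\rho_1<0$, and by continuity $\bar\rho_1=0$) is $\le\frac{1-e^{-\bar\rho_1 T}}{\bar\rho_1}$ for every $s\in[0,T]$; this gives the third inequality. Finally, taking $t=T$ in the second inequality and using $e^{-\bar\rho_1(T-s)}\le 1\vee e^{-\bar\rho_1 T}$ for $s\in[0,T]$ gives the fourth, the prefactor being $1$ precisely when $\bar\rho_1>0$.

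There is no deep obstacle here — the result is flagged in the text as a ``standard estimation of SDE''. The only point demanding care is the bookkeeping in the second paragraph: one must split the increment of $b$ (and read off $|\Delta\sigma|^2$) so that the variable treated monotonically contributes only $\rho_1|\hat X|^2$ while the mean-field and conditional-expectation copies are absorbed solely through the Lipschitz constants, and then distribute the Young weights $l_j^{-1}$ onto $|\hat X|^2$ (hence into $\bar\rho_1$) versus $l_j$ onto $|\hat Y|^2$, so that the final constants match the stated $\bar\rho_1$ and the stated $\hat Y$-, $\hat Z$-coefficients exactly; the localization making the stochastic integral a martingale and the conditional Jensen step are otherwise routine.
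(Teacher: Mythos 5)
Your proof is correct and is exactly the standard weighted It\^o/Young/Gronwall estimate that the paper itself invokes without writing out (it only says ``by the standard estimations of SDE''), so there is nothing to compare against: the split of $\Delta b$ into the monotone $x$-increment plus the Lipschitz remainder, the Jensen step for $\mathbb E[\cdot]$ and $\mathcal E_t[\cdot]$, and the comparison-ODE treatment of the last three inequalities all land on the stated constants. The only cosmetic discrepancy is that your computation naturally yields the $\hat Z$-coefficient $k_4l_3+k_5l_4+k_{16}^2+k_{17}^2$ without the extra $k_{12}^2$ (which is already absorbed into $\bar\rho_1$); since the paper's stated coefficient is larger, its inequality follows a fortiori, so this is an inconsistency in the statement rather than a gap in your argument.
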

Next, for any given $X\in L_{\mathbb F}^2(0,T;\mathbb R^n)$, consider the following BSDE:
\begin{equation}\label{7}
Y(t)=\int_t^Tf(s,X,\mathbb E[X],Y,\mathbb E[Y],\widetilde{\mathbb E}[Y],Z,\mathbb E[Z])ds-\int_t^TZ(s)dW(s).
\end{equation}
\begin{proposition}
\eqref{7} admits a unique solution $(Y,Z)\in L^2_{\mathbb F}(0,T;\mathbb R^m)\times L^2_{\mathbb F}(0,T;\mathbb R^{m})$.
\end{proposition}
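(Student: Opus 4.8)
The plan is to treat \eqref{7} as a mean-field BSDE in $(Y,Z)$ with $X$ (and its law) frozen once and for all, and to solve it by a Banach fixed-point argument in which the \emph{mean-field arguments} $\mathbb E[Y]$, $\widetilde{\mathbb E}[Y]$, $\mathbb E[Z]$ are the iteration variables. Concretely, for a given pair $(U,V)\in L^2_{\mathbb F}(0,T;\mathbb R^m)\times L^2_{\mathbb F}(0,T;\mathbb R^{m})$ I introduce the driver $g^{U,V}(t,y,z):=f(t,X(t),\mathbb E[X(t)],y,\mathbb E[U(t)],\widetilde{\mathbb E}[U(t)],z,\mathbb E[V(t)])$ and consider the ordinary (non mean-field) BSDE $Y(t)=\int_t^T g^{U,V}(s,Y(s),Z(s))\,ds-\int_t^T Z(s)\,dW(s)$. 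By (H1) this $g^{U,V}$ is monotone in $y$ with constant $\rho_2$, Lipschitz in $z$ with constant $k_{10}$, and, using the Lipschitz estimate of (H1) together with (H2) and $X,U,V\in L^2$ (so that $\mathbb E[X],\mathbb E[U],\widetilde{\mathbb E}[U],\mathbb E[V]$ all lie in $L^2(0,T)$), one checks $\mathbb E\int_0^T|g^{U,V}(t,0,0)|^2dt<\infty$. The classical existence--uniqueness theorem for BSDEs with monotone generators (as in \cite{PT1999}) then yields a unique solution $(Y,Z)\in L^2_{\mathbb F}(0,T;\mathbb R^m)\times L^2_{\mathbb F}(0,T;\mathbb R^{m})$; set $\Gamma(U,V):=(Y,Z)$.

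It then remains to show that $\Gamma$ has a unique fixed point, since a fixed point is exactly a solution of \eqref{7} and conversely. For $i=1,2$ write $(Y^i,Z^i)=\Gamma(U^i,V^i)$ and $\hat Y=Y^1-Y^2$, and similarly $\hat Z,\hat U,\hat V$. Applying It\^o's formula to $s\mapsto e^{\beta s}|\hat Y(s)|^2$ on $[t,T]$, taking expectations, and decomposing the driver difference into (i) the $y$-increment, controlled by $\rho_2|\hat Y|^2$ via monotonicity, (ii) the $z$-increment, controlled by $k_{10}|\hat Z|$, and (iii) the increments in the frozen mean-field terms, controlled by $k_8|\mathbb E[\hat U]|+k_9|\widetilde{\mathbb E}[\hat U]|+k_{11}|\mathbb E[\hat V]|$, and using Young's inequality with a \emph{small} weight on the $|\hat Z|^2$ cross-term and a \emph{small} weight $\mu^{-1}$ on the frozen-term cross-term (the compensating large multiples of $|\hat Y|^2$ being absorbed by choosing $\beta$ large), I would arrive at
\[
\tfrac12\,\mathbb E\!\int_0^T\! e^{\beta s}\big(|\hat Y(s)|^2+|\hat Z(s)|^2\big)\,ds\ \le\ \frac{C_0}{\mu}\,\mathbb E\!\int_0^T\! e^{\beta s}\big(|\hat U(s)|^2+|\hat V(s)|^2\big)\,ds,
\]
where $C_0$ depends only on $k_8,k_9,k_{11}$ and on the operator norm of $\widetilde{\mathbb E}$ on $L^2$, which is finite (indeed $\le 1$, since $\widetilde{\mathbb E}$ is an average against the probability $\Phi$, so $\mathbb E|\widetilde{\mathbb E}[\hat U]|^2\le\mathbb E|\hat U|^2$, and likewise $\mathbb E|\mathbb E[\hat U]|^2\le\mathbb E|\hat U|^2$ by Jensen). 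Choosing $\mu$ large enough that $2C_0/\mu<1$ makes $\Gamma$ a strict contraction on $L^2_{\mathbb F}(0,T;\mathbb R^m)\times L^2_{\mathbb F}(0,T;\mathbb R^{m})$ under the equivalent norm $\|(U,V)\|_\beta^2=\mathbb E\int_0^T e^{\beta s}(|U(s)|^2+|V(s)|^2)\,ds$, and the Banach fixed-point theorem concludes.

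The routine parts are the standard SDE/BSDE a priori estimates; the two points that need a little care are (a) that $f$ is only \emph{monotone} (not Lipschitz) in $Y$, which is why the inner problem must be solved via monotone-generator BSDE theory rather than the elementary Lipschitz one, and (b) obtaining a genuine contraction even though the coupling constants $k_8,k_9,k_{11}$ are \emph{not} assumed small here (unlike in Theorem \ref{discounting}): this is handled by the asymmetric Young splitting above, trading a large multiple of $|\hat Y|^2$ (absorbed by $\beta$) against a small multiple of the frozen-term norm. Verifying that $\widetilde{\mathbb E}$ is a bounded, indeed non-expansive, operator on $L^2$ is the only structural input beyond (H1)--(H2), and is the step I expect to be the mild obstacle; everything else is a transcription of classical arguments.
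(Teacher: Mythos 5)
Your proof is correct, and it is the same basic strategy as the paper's: solve a partially frozen version of \eqref{7} by classical BSDE theory, then run a Banach fixed-point argument in an exponentially weighted $L^2$ norm, using an asymmetric Young splitting (large multiple of $|\hat Y|^2$ absorbed into the exponential weight, small multiple on the frozen terms) and the non-expansiveness of $\mathbb E[\cdot]$ and $\widetilde{\mathbb E}[\cdot]$ via Jensen together with $\mathbb E[\widetilde{\mathbb E}[\cdot]]=\mathbb E[\cdot]$. The one genuine difference is the split between the inner solve and the outer iteration. The paper freezes $(\mathbb E[y],\widetilde{\mathbb E}[y],z,\mathbb E[z])$, so its inner BSDE has a generator depending on $Y$ only (monotone, independent of $Z$), and the outer map $\mathcal N:(y,z)\mapsto(Y,Z)$ must also contract in the $z$-difference; this is arranged with fixed Young weights $\tfrac14$ and the explicit choice $\delta=2\rho_2+4k_8^2+4k_9^2+4k_{10}^2+4k_{11}^2$, yielding the contraction constant $\tfrac12$. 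You instead freeze only the mean-field functionals $(\mathbb E[U],\widetilde{\mathbb E}[U],\mathbb E[V])$, which makes your inner problem a full monotone-in-$y$, Lipschitz-in-$z$ BSDE (so you genuinely need the Pardoux--Tang monotone-generator theorem rather than the simpler $z$-free version), but in exchange your outer contraction constant $C_0/\mu$ can be made arbitrarily small by tuning $\mu$ and $\beta$, independently of $k_{10}$. Both routes are valid under (H1)--(H2) with no smallness conditions on $k_8,k_9,k_{10},k_{11}$; your version localizes slightly more of the work in the inner existence theorem, the paper's keeps the inner step elementary at the cost of a more delicately balanced outer estimate.
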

\begin{proof}
For any fixed $(y,z)\in L^2_{\mathbb F}(0,T;\mathbb R^m)\times L^2_{\mathbb F}(0,T;\mathbb R^{m})$,
\begin{equation}\nonumber
Y(t)=\int_t^Tf(s,X,\mathbb E[X],Y,\mathbb E[y],\widetilde{\mathbb E}[y],z,\mathbb E[z])ds-\int_t^TZ(s)dW(s)
\end{equation}
admits a unique solution $(Y,Z)\in L^2_{\mathbb F}(0,T;\mathbb R^m)\times L^2_{\mathbb F}(0,T;\mathbb R^{m})$. Hence we can introduce the mapping $\mathcal N:(y,z)\rightarrow(Y,Z)$. For any $(y,z),(y',z')\in L^2_{\mathbb F}(0,T;\mathbb R^m)\times L^2_{\mathbb F}(0,T;\mathbb R^{m})$, denote $(Y,Z)=\mathcal N(y,z)$ and $(Y',Z')=\mathcal N(y',z')$. Let $(\hat y,\hat z,\hat Y,\hat Z)=(y-y',z-z',Y-Y',Z-Z')$. Applying It\^{o}'s formula to $e^{\delta x}|\hat Y(s)|^2$, we have
\begin{equation*}\begin{aligned}
&e^{\delta t}|\hat Y(t)|^2+\int_t^Te^{\delta s}|\hat Z(s)|ds+\int_t^T\delta e^{\delta s}|\hat Y(s)|ds\\
\leq&\int_t^Te^{\delta s}(2\rho_2+4k_8^2+4k_9^2+4k_{10}^2+4k_{11}^2)|\hat Y(s)|^2ds\\
&+\frac{1}{4}\int_t^Te^{\delta s}(\mathbb E[|\hat y|^2]+\widetilde{\mathbb E}[|\hat y^2|]+|\hat z|^2+\mathbb E[|\hat z|^2])ds+2\int_t^T e^{\delta s}\langle\hat Y(s),\hat Z(s)dW(s)\rangle.\\
\end{aligned}\end{equation*}
Note that $\mathbb E[\widetilde{\mathbb E}[|\hat y^2|]]=\mathbb E[|\hat y^2|]$, letting $\delta=2\rho_2+4k_8^2+4k_9^2+4k_{10}^2+4k_{11}^2$ and taking expectation, we have
$$\mathbb E\int_t^Te^{\delta s}(|\hat Y(s)|^2+|\hat Z(s)|^2)ds\leq \frac{1}{2}\mathbb E\int_t^Te^{\delta s}(|\hat y(s)|^2+|\hat z(s)|^2)ds,$$
i.e., $\mathcal N$ is a contraction mapping. Hence \eqref{7} admits a unique solution $(Y,Z)\in L^2_{\mathbb F}(0,T;\mathbb R^m)\times L^2_{\mathbb F}(0,T;\mathbb R^{m})$.
\end{proof}
Thus, we can introduce another map $\mathcal M_2:L^2_{\mathbb F}(0,T;\mathbb R^n)\rightarrow L^2_{\mathbb F}(0,T;\mathbb R^m)\times L^2_{\mathbb F}(0,T;\mathbb R^{m}).$
By the standard estimation of BSDE, we have the following result:
\begin{lemma}
Let $(Y_i,Z_i)$ be the solution of \eqref{7} corresponding to $X_i,i=1,2$, respectively. Then for all $\rho\in\mathbb R$ and some constants $l_5,l_6,l_7,l_8>0$, we have
\begin{equation*}\begin{aligned}
&\mathbb E e^{-\rho t}|\hat Y(t)|^2+\bar\rho_2\mathbb E\int_t^Te^{-\rho s}|\hat Y(s)|^2ds+(1-k_{10}l_7-k_{11}l_8)\mathbb E\int_t^Te^{-\rho s}|\hat Z(s)|^2ds\\
\leq&(k_6l_5+k_7l_6)\mathbb E\int_t^Te^{-\rho s}|\hat X(s)|^2ds,
\end{aligned}\end{equation*}
and
\begin{equation*}\begin{aligned}
&\mathbb E e^{-\rho t}|\hat Y(t)|^2+(1-k_7l_5-k_8l_6)\mathbb E\int_t^Te^{-\rho s}|\hat Z(s)|^2ds\\
\leq&(k_4l_3+k_5l_4)\mathbb E\int_t^Te^{-\bar\rho_2(s-t)-\rho s}|\hat X(s)|^2ds,
\end{aligned}\end{equation*}
where $\bar\rho_2=-\rho-2\rho_2-2k_8-2k_9-k_6l_5^{-1}-k_7l_6^{-1}-k_{10}l_7^{-1}-k_{11}l_8^{-1}$,
and $\hat\Phi:=\Phi_1-\Phi_2$, $\Phi=X,Y,Z$. Moreover,
\begin{equation*}\begin{aligned}
\mathbb E \int_0^Te^{-\rho t}|\hat Y(t)|^2dt
\leq\frac{1-e^{-\bar\rho_2T}}{\bar\rho_2}(k_6l_5+k_7l_6)\mathbb E\int_0^Te^{-\rho s}|\hat X(s)|^2ds,
\end{aligned}\end{equation*}
and
\begin{equation*}\begin{aligned}
\mathbb E \int_0^Te^{-\rho t}|\hat Z(t)|^2dt
\leq\frac{(k_6l_5+k_7l_6)(1\vee e^{-\bar\rho_2T})}{(1-k_{10}l_7-k_{11}l_8)(1\wedge e^{-\bar\rho_2T})}\mathbb E\int_0^Te^{-\rho s}|\hat X(s)|^2ds.
\end{aligned}\end{equation*}
Specially, if $\bar\rho_2>0$,
\begin{equation*}\begin{aligned}
\mathbb E \int_0^Te^{-\rho t}|\hat Z(t)|^2dt
\leq\frac{k_6l_5+k_7l_6}{1-k_{10}l_7-k_{11}l_8}\mathbb E\int_0^Te^{-\rho s}|\hat X(s)|^2ds.
\end{aligned}\end{equation*}

\end{lemma}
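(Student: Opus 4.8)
The plan is to run the standard discounting (a-priori-estimate) argument for the backward equation \eqref{7}, with the mean-field operators $\mathbb E[\cdot]$ and $\widetilde{\mathbb E}[\cdot]$ carried through by Jensen. Write $\hat\Phi=\Phi_1-\Phi_2$ for $\Phi=X,Y,Z$, and let $\Delta f(s)$ denote the increment of the driver $f$ between the two solutions, so that $\hat Y$ solves $d\hat Y(s)=-\Delta f(s)\,ds+\hat Z(s)\,dW(s)$ with $\hat Y(T)=0$. First I would apply It\^{o}'s formula to $e^{-\rho s}|\hat Y(s)|^2$ on $[t,T]$ and take expectations; the local-martingale term $\int_t^T 2e^{-\rho s}\langle\hat Y,\hat Z\,dW\rangle$ has zero mean since $(Y_i,Z_i)\in L^2_{\mathbb F}(0,T;\cdot)$ by the well-posedness proposition proved just above (alternatively by localization and the Burkholder--Davis--Gundy inequality). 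This gives the energy identity
\[
\mathbb E e^{-\rho t}|\hat Y(t)|^2+\mathbb E\int_t^T e^{-\rho s}|\hat Z(s)|^2ds=\mathbb E\int_t^T e^{-\rho s}\big[\rho|\hat Y(s)|^2+2\langle\hat Y(s),\Delta f(s)\rangle\big]ds.
\]

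Next I would bound $\langle\hat Y,\Delta f\rangle$ by splitting $\Delta f$ into the increment in the single $y$-slot, controlled by the monotonicity bound $\langle\cdot\,,\hat Y\rangle\le\rho_2|\hat Y|^2$ of (H1), and the remainder, whose modulus is at most $k_6|\hat X|+k_7|\mathbb E\hat X|+k_8|\mathbb E\hat Y|+k_9|\widetilde{\mathbb E}\hat Y|+k_{10}|\hat Z|+k_{11}|\mathbb E\hat Z|$ by the Lipschitz part of (H1). Applying Young's inequality with the free parameters $l_5,l_6,l_7,l_8>0$ to the terms carrying $k_6,k_7,k_{10},k_{11}$ and with unit weight to those carrying $k_8,k_9$, then invoking Jensen in the forms $\mathbb E|\mathbb E\hat X|^2\le\mathbb E|\hat X|^2$, $\mathbb E|\mathbb E\hat Y|^2\le\mathbb E|\hat Y|^2$, $\mathbb E|\mathbb E\hat Z|^2\le\mathbb E|\hat Z|^2$ and $\mathbb E|\widetilde{\mathbb E}\hat Y|^2\le\mathbb E[\widetilde{\mathbb E}|\hat Y|^2]=\mathbb E|\hat Y|^2$ (the last using $\mathbb E[\widetilde{\mathbb E}[Y]]=\mathbb E[Y]$), the coefficient collected in front of $\mathbb E\int_t^T e^{-\rho s}|\hat Y|^2$ aggregates to exactly $-\bar\rho_2$ and the $|\hat Z|^2$ terms collapse to a factor $1-k_{10}l_7-k_{11}l_8$. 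Rearranging produces the first displayed inequality of the lemma.

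The remaining estimates I would deduce from this one by a backward Gronwall manipulation. Setting $\phi(t)=\mathbb E e^{-\rho t}|\hat Y(t)|^2$ and, once $l_7,l_8$ are chosen so that $1-k_{10}l_7-k_{11}l_8\ge 0$, discarding the nonnegative $\hat Z$-term gives $\phi(t)+\bar\rho_2\int_t^T\phi(s)ds\le(k_6l_5+k_7l_6)\int_t^T e^{-\rho s}\mathbb E|\hat X(s)|^2ds$. Introducing $\Psi(t)=\int_t^T\phi(s)ds$, multiplying by the integrating factor $e^{-\bar\rho_2 t}$ and integrating over $[t,T]$ with $\Psi(T)=0$, then feeding the resulting bound on $\Psi$ back into $\phi$, yields the weighted pointwise estimate with the kernel $e^{-\bar\rho_2(s-t)}$ (the second displayed inequality, up to the evident relabeling of the constants written there); integrating this in $t$ and applying Fubini produces the $\tfrac{1-e^{-\bar\rho_2 T}}{\bar\rho_2}$-factor bound on $\mathbb E\int_0^T e^{-\rho t}|\hat Y|^2dt$; and substituting these $\hat Y$-bounds back into the first inequality and isolating the $\hat Z$-integral gives the two $\hat Z$-estimates, the case $\bar\rho_2>0$ simply replacing the $1\vee e^{-\bar\rho_2 T}$ and $1\wedge e^{-\bar\rho_2 T}$ factors by $1$.

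I expect the only genuine difficulty to be bookkeeping: matching each Lipschitz constant $k_i$ to the correct argument of $f$, applying Jensen consistently to all the averaging operators so that the aggregate $|\hat Y|^2$-coefficient is precisely $-\bar\rho_2$, and treating $l_5,\dots,l_8$ as free parameters — they are fixed later, in the proof of Theorem \ref{discounting}, to make $\bar\rho_2$ and $1-k_{10}l_7-k_{11}l_8$ positive. The analytic ingredients (It\^{o}'s formula, the monotonicity and Lipschitz hypotheses (H1), Young's and Jensen's inequalities, a backward Gronwall argument) are entirely routine, and no idea beyond the scheme of \cite{HHN2018,PT1999} is needed.
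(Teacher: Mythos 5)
Your proposal is correct and is exactly the ``standard estimation of BSDE'' that the paper invokes without writing out: It\^{o}'s formula applied to $e^{-\rho s}|\hat Y(s)|^2$, the monotonicity and Lipschitz parts of (H1), Young's inequality with the free weights $l_5,\dots,l_8$, Jensen for $\mathbb E[\cdot]$ and $\widetilde{\mathbb E}[\cdot]$ (using $\mathbb E[\widetilde{\mathbb E}[\cdot]]=\mathbb E[\cdot]$ exactly as the paper does in its well-posedness proposition), and a backward Gronwall/modified-discount step for the kernel $e^{-\bar\rho_2(s-t)}$. You also correctly identify that the constants $(1-k_7l_5-k_8l_6)$ and $(k_4l_3+k_5l_4)$ in the second display are misprints carried over from the forward-SDE lemma and should read $(1-k_{10}l_7-k_{11}l_8)$ and $(k_6l_5+k_7l_6)$.
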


\medskip

\textbf{Proof of Theorem \ref{discounting}:}
Define $\mathcal M:=\mathcal M_2\circ\mathcal M_1$, where $\mathcal M_1$ is defined by \eqref{1} and $\mathcal M_2$ is defined by \eqref{7}. Thus $\mathcal M$ is a mapping from $L_{\mathbb F}^2(0,T;\mathbb R^m)\times L_{\mathbb F}^2(0,T;\mathbb R^{m})$ into itself.
For $(U_i,V_i)\in L_{\mathbb F}^2(0,T;\mathbb R^m)\times L_{\mathbb F}^2(0,T;\mathbb R^{m})$, let $X_i:=\mathcal M_1(U_i,V_i)$ and $(Y_i,Z_i):=\mathcal M(U_i,V_i)$.
Therefore,
\begin{equation*}\begin{aligned}
&\mathbb E\int_0^T e^{-\rho t}|Y_1(t)-Y_2(t)|^2dt+
\mathbb E\int_0^T e^{-\rho t}|Z_1(t)-Z_2(t)|^2dt\\
\leq&\Big[\frac{1-e^{-\bar\rho_2T}}{\bar\rho_2}+
\frac{1\vee e^{-\bar\rho_2T}}{(1-k_{10}l_7-k_{11}l_8)(1\wedge e^{-\bar\rho_2T})}\Big](k_6l_5+k_7l_6)\frac{1-e^{-\bar\rho_1T}}{\bar\rho_1}\\
&\Big\{(k_2l_1+k_3l_2+k_{14}^2+k_{15}^2)
\mathbb E\int_0^T e^{-\rho t}|U_1(t)-U_2(t)|^2dt\\&+
(k_4l_3+k_5l_4+k_{12}^2+k_{16}^2+k_{17}^2)\mathbb E\int_0^T e^{-\rho t}|V_1(t)-V_2(t)|^2dt\Big\}.
\end{aligned}\end{equation*}
Choosing suitable $\rho$, we get that $\mathcal M$ is a contraction mapping.

Furthermore, if $2\rho_1+2\rho_2<-2k_1-2k_8-2k_9-k_{10}^2-k_{11}^2-k_{12}^2-k_{13}^2$, we can choose $\rho\in\mathbb R$, $0<k_{10}l_7<\frac{1}{2}$ and $0<k_{11}l_8<\frac{1}{2}$ and sufficient large $l_1,l_2,l_3,l_4,l_5,l_6$ such that
$$\bar\rho_1>0,\qquad \bar\rho_2>0,\qquad 1-k_{10}l_7-k_{11}l_8>0.$$
Therefore,
\begin{equation*}\begin{aligned}
&\mathbb E\int_0^T e^{-\rho t}|Y_1(t)-Y_2(t)|^2dt+
\mathbb E\int_0^T e^{-\rho t}|Z_1(t)-Z_2(t)|^2dt\\
\leq&\Big[\frac{1}{\bar\rho_2}+
\frac{1}{1-k_{10}l_7-k_{11}l_8}\Big]\frac{1}{\bar\rho_1}
(k_6l_5+k_7l_6)\\&\Big\{(k_2l_1+k_3l_2+k_{14}^2+k_{15}^2)\mathbb E\int_0^T e^{-\rho t}|U_1(t)-U_2(t)|^2dt\\
&+(k_4l_3+k_5l_4+k_{12}^2+k_{16}^2+k_{17}^2)\mathbb E\int_0^T e^{-\rho t}|V_1(t)-V_2(t)|^2dt\Big\}.
\end{aligned}\end{equation*}
Thus, the proof is complete.

\qed
\end{appendix}

\end{document}